\documentclass[11pt,reqno]{amsart}
\usepackage[margin=1.1in]{geometry}

\usepackage{amsmath,amsthm,amssymb}
\usepackage{mathrsfs} 
\usepackage{mathabx}
\usepackage{array} 
 
\usepackage{enumitem}

\usepackage{pgf,tikz,pgfplots}

\usetikzlibrary{arrows}

\usepackage{bm} % bold and blackboard bold math symbols
\usepackage{graphicx} % for easier grahics handling
\usepackage{float} % tells floats to stay [H]ere!
\usepackage{enumitem} % nice lists
\usepackage{fancyhdr} % nice headers
\usepackage{caption}  % to control figure and table captions
\usepackage{booktabs} % to create nice tables
\usepackage{tikz}
\usepackage[comma,sort&compress, numbers]{natbib}
\usepackage{subcaption}

\numberwithin{equation}{section} 

\usepackage[hidelinks, colorlinks=true, linkcolor = blue, urlcolor = blue]{hyperref} % to create hyperlinks
\usepackage{float} % tells floats to stay [H]ere!
\usepackage{enumitem} % nice lists
\usepackage{fancyhdr} % nice headers
\usepackage{caption}  % to control figure and table captions
\usepackage{booktabs} % to create nice tables
\usepackage{pgf,tikz}
\usetikzlibrary{arrows,automata,graphs}
\usepackage{algorithm2e}
\usepackage[numbers, comma]{natbib}
\usepackage{subcaption}

\newtheorem{thm}{Theorem}[section]
\newtheorem{corollary}[thm]{Corollary}
\newtheorem{proposition}[thm]{Proposition}
\newtheorem{lem}[thm]{Lemma}

\theoremstyle{definition}
\newtheorem{defn}[thm]{Definition}

\newtheorem{remark}[thm]{Remark}
\newtheorem{example}[thm]{Example}

\def\R{\mathbb{R}}

\def\P{\mathbb{P}}
\def\E{\mathbb{E}}

\def\Tr{\mathrm{Tr}}

\def\tr{\mathrm{tr}}
\def\Var{\mathrm{Var}}

\def\Unif{\mathrm{Unif}}

\def\dto{\stackrel{D}{\longrightarrow}}

\def\Spec{\mathrm{Spec}}

\usepackage{tikz} 

\tikzset{
    motif/.style={baseline=-0.5ex, scale=0.3}, % scale adjusts overall size
    node/.style={draw, circle, fill=black, inner sep=0pt, minimum size=1.85pt},
    edge/.style={->,> = latex'}
}

\newcommand{\cc}{
\tikz[motif]{
 \node[node] (1) at (0,0.58) {};
    \node[node] (2) at (-0.5,-0.29) {};
    \node[node] (3) at (0.5,-0.29) {};
    \draw[edge] (1) to (3) ;
    \draw[edge] (3) to (2) ;
    \draw[edge] (2) to (1) 
}}

\newcommand{\cac}{
\tikz[motif]{
 \node[node] (1) at (0,0.58) {};
    \node[node] (2) at (-0.5,-0.29) {};
    \node[node] (3) at (0.5,-0.29) {}; 
    \draw[edge] (1) to (2) ;
    \draw[edge] (2) to (3) ;
    \draw[edge] (3) to (1) 
}}

\newcommand{\lp}{
\tikz[motif]{
 \node[node] (1) at (0,0.58) {};
    \node[node] (2) at (-0.5,-0.29) {};
    \node[node] (3) at (0.5,-0.29) {}; 
    \draw[edge] (1) to (2) ; 
    \draw[edge] (3) to (1) 
}}

\newcommand{\cccc}{
\tikz[motif]{ 
    \node[node] (v0) at (0,0) {};
    \node[node] (v1) at ({-sqrt(3)/2},-0.5) {};
    \node[node] (v2) at ({-sqrt(3)/2}, 0.5) {};
    \node[node] (v3) at ({ sqrt(3)/2}, 0.5) {};
    \node[node] (v4) at ({ sqrt(3)/2},-0.5) {}; 
    \draw[->] (v0) -- (v1);
    \draw[->] (v1) -- (v2);
    \draw[->] (v2) -- (v0);
    \draw[->] (v0) -- (v3);
    \draw[->] (v3) -- (v4);
    \draw[->] (v4) -- (v0)    
    }}

\allowdisplaybreaks

\begin{document}

\title[Transitivity in Inhomogeneous Random Tournaments]{Transitivity in Inhomogeneous Random Tournaments }

\author[Chatterjee and Bhattacharya]{Sayak Chatterjee and Bhaswar B. Bhattacharya}
\address{Department of Statistics and Data Science\\ University of Pennsylvania\\ Philadelphia\\ PA 19104\\ United States}
\email{sayakc@wharton.upenn.edu} 
\address{Department of Statistics and Data Science\\ University of Pennsylvania\\ Philadelphia\\ PA 19104\\ United States} 
\email{bhaswar@wharton.upenn.edu}

\begin{abstract}  
Paired-comparison data are naturally represented by tournaments, where transitivity corresponds to the existence of a global ranking consistent with all pairwise outcomes. Accordingly, the classical \emph{Kendall--Smith coefficient of consistency} \cite{kendall1940method} measures deviations from transitivity in a tournament by counting the number of circular triads (directed $3$-cycles). In this paper, we characterize the fluctuations of the number of circular triads in inhomogeneous random tournaments and develop an inferential framework for the consistency coefficient. Specifically, we consider the $W$-random tournament model, where the comparison probabilities are determined by a \emph{tournamenton} $W$, the analogue of a graphon in the tournament setting. We show that, for a $W$-random tournament on $n$ vertices, the number of circular triads exhibits three different fluctuation regimes, determined by suitable notions of regularity and uniformity of $W$. We further develop a novel tournamenton multiplier bootstrap that consistently approximates the limiting distribution of the circular-triad count in the relevant asymptotic regime. Combining this with procedures for testing regularity and uniformity, we design an algorithm for constructing confidence intervals for the consistency coefficient that is asymptotically valid for all tournamentons.  We also obtain structural characterizations of tournamentons for which the limiting distribution of the number of circular triads exhibits specific degeneracies. These results can also be viewed through the lens of tournament quasirandomness and may be of independent interest. 
\end{abstract}

\keywords{Directed networks, generalized $U$-statistics, multiplier bootstrap, paired comparisons, quasirandom-forcing tournaments. } 

\maketitle

\section{Introduction}

Paired comparisons provide one of the most basic ways of eliciting preferences or relative strengths among a group of alternatives. They have a long history in statistics and psychometrics, where they form the basis for various classical models for ranking and choice, including the Bradley--Terry model, Thurstone's law of comparative judgment, and Luce's choice model \cite{thurstone1927law,bradley1952rank,luce1959model,david1988method,davidson1976bibliography}. They are also central to social choice theory, where collective preferences are often aggregated through pairwise majority comparisons and where preference cycles are closely connected to the Condorcet paradox, reflecting obstructions to consistent social ranking \cite{condorcet1785essay,arrow1951social,gehrlein2006condorcet}. More recently, paired comparisons have become a basic primitive in various machine-learning tasks, such as recommender systems and reinforcement learning from human feedback (see \cite{herbrich2007trueskill,burges2005learning,christiano2017deep,ouyang2022training,xiao2025theoretical,rafailov2023direct} among many others). {\it Tournaments}, that is, complete directed graphs, provide a natural representation of paired-comparison data. Specifically, given a collection of $n$ items indexed by $[n]:=\{1,2,\ldots,n\}$, the outcomes of all $\binom{n}{2}$ pairwise comparisons among them can be represented by a tournament $T_n$, where a directed edge $u\to v$ indicates that item $u$ defeats item $v$. It is often convenient to encode the edge directions of $T_n$ by its adjacency matrix $A_{T_n}=\bigl((a_{uv})\bigr)_{1\leq u,v\leq n}$, where 
\begin{equation*}
    a_{uv}
    =
    \begin{cases}
        1, & \text{if } u\to v,\\
        0, & \text{otherwise},
    \end{cases}
    \qquad
    a_{vu}=1-a_{uv} , 
\label{eq:adj}
\end{equation*}
for $1 \leq u < v \leq n$. By convention, the diagonal entries are set to $a_{uu}=\frac{1}{2}$, for $1\leq u\leq n$.

A tournament is said to be \emph{transitive} if its vertices can be ranked in such a way that every edge points from the higher-ranked vertex to the lower-ranked one. Such a global ranking is often the desired outcome of a paired-comparison experiment, since it provides a single coherent ordering of the alternatives for decision-making. In practice, however, pairwise comparison data often exhibit intransitivities. The simplest obstruction to transitivity occurs when there are three vertices $u,v,w\in[n]$ such that $u$ beats $v$, $v$ beats $w$, and $w$ beats $u$. This forms a directed 3-cycle, which \citet{kendall1940method} referred to as a {\it circular triad}. In the language of social choice, such a circular triad is precisely the local form of a Condorcet cycle. Circular triads, in fact, form the basic units of inconsistency in a tournament, since every directed cycle in a tournament contains a directed 3-cycle. Hence, a tournament is transitive if and only if it contains no directed 3-cycle. For this reason, \citet{kendall1940method} proposed using the number of circular triads as a fundamental measure for quantifying the failure of a tournament to admit a global ranking. Formally, the {\it Kendall-Smith coefficient of consistency} of a tournament $T_n$ is defined as
\begin{align}\label{eq:coefficientTn}
\zeta(T_n) :=
\begin{cases}
1-\dfrac{24 N_\triangle(T_n)}{n^3-n}, & \text{when } n \text{ is odd},\\[1.2ex]
1-\dfrac{24 N_\triangle(T_n)}{n^3-4n}, & \text{when } n \text{ is even},
\end{cases}
\end{align}
where $N_\triangle(T_n)$ denotes the number of circular triads in $T_n$. \citet{kendall1940method} showed that the maximum possible value of $N_\triangle(T_n)$, over all tournaments of size $n$, is $\frac{1}{24}(n^3-n)$ when $n$ is odd and $\frac{1}{24}(n^3-4n)$ when $n$ is even (see also \cite{connectedtournament,moon1968tournament}). This explains the normalization in \eqref{eq:coefficientTn}, which ensures that $\zeta(T_n)\in[0,1]$. Moreover, $\zeta(T_n)= 1$ (equivalently $N_\triangle(T_n) = 0$) if and only if $T_n$ is transitive. Thus, smaller values of $\zeta(T_n)$ indicate a larger number of circular triads and hence a greater degree of intransitivity, making it harder to summarize the comparisons by a single global ranking.

The number of circular triads in $T_n$ can be expressed in terms of its adjacency matrix $A_{T_ n}$ as follows: 
\begin{equation}
    N_\triangle(T_n)=\sum_{1\le u<v< w\le n}(a_{uv}a_{vw}a_{wu}+ a_{uw} a_{wv} a_{vu} ).
    \label{eq:N} 
\end{equation}
Note that the first summand in \eqref{eq:N} is 1 when the vertices $\{u, v, w\}$ form a clockwise 3-cycle $(\cc)$, while the second summand is 1 when they form an anti-clockwise 3-cycle $(\cac)$. 
%Using the relation $a_{vu}=1-a_{uv}$, for $1\le u<v\le n,$ $N_\triangle(T_n)$ can be alternatively expressed as follows: 
The distributional theory of $N_\triangle(T_n)$, or equivalently of $\zeta(T_n)$, is classical for the uniform random tournament model, in which the direction of each edge is determined independently by a fair coin flip, that is,
$$a_{uv} \sim \mathrm{Ber}\!\left(\tfrac{1}{2}\right)
\quad \text{and} \quad
a_{vu}=1-a_{uv},$$
independently for $1\leq u<v\leq n$. We will denote this model by $T(n, \frac{1}{2})$.  In the uniform model, \citet{kendall1940method} provided formulas for the first four moments of $N_\triangle(T_n)$, and later \citet{moran1947method} established the asymptotic normality of $N_\triangle(T_n)$ in the same model. The uniform model, however, assumes complete homogeneity among the vertices: every player is equally likely to defeat every other player. This assumption is often unrealistic in applications, since players typically have different latent qualities that affect the comparison probabilities. A natural way to incorporate such inhomogeneity is to allow the edge-orientation probabilities to depend on underlying node-specific latent variables. To this end, analogous to the role of graphons in generating inhomogeneous random graphs \cite{lovasz_book,bickel2009nonparametric,bickel2011method,exchangeablegraph}, the notion of \emph{tournamentons} \cite{exchangeablegraph,thornblad2018decomposition,thornblad2016tournamentlimits,hladky2025digraphons,sah2024intransitive,grzesik2023cycles,impartialdigraphs} provides a flexible nonparametric framework for modeling exchangeable inhomogeneous random tournaments.  Specifically, a {\it tournamenton} is a measurable function $W:[0,1]^2 \to [0,1]$ such that
$W(x,y)+W(y,x)=1$, for all $x,y\in[0,1]$. Tournamentons may be viewed as continuum limits of adjacency matrices of finite tournaments, using which one can generate inhomogeneous random tournaments as follows: 

\begin{defn}[$W$-random tournament model]
Given a tournamenton \(W:[0,1]^2\to[0,1]\), a \(W\)-\emph{random tournament} on the vertex set \([n]:=\{1,2,\ldots,n\}\) is obtained by orienting the edge \(u\to v\) with probability \(W(\eta_u,\eta_v)\), independently for \(1\le u<v\le n\), where \(\{\eta_u:1\le u\le n\}\) is an i.i.d. collection of \(\Unif([0,1])\) random variables. Equivalently, it is a tournament with adjacency matrix \(((a_{uv}))_{1\le u,v\le n}\), where
\[
a_{uv}\sim \mathrm{Ber}\bigl(W(\eta_u,\eta_v)\bigr)
\quad\text{and}\quad
a_{vu}=1-a_{uv},
\]
independently for \(1\le u<v\le n\). We denote this model by $T(n,W)$, and write $T_n \sim T(n,W)$ to indicate that $T_n$ is a random tournament sampled from this model. 
\label{definition:W} 
\end{defn}

Note that taking $W(x,y)=\frac{1}{2}$, for every $x,y \in [0, 1],$ in the above definition gives the uniform random tournament $T(n, \frac{1}{2})$. We will denote this tournamenton by $W_{1/2}$. Another important special case is the {\it Condorcet tournamenton}, where  
\begin{align}\label{eq:randomW}
W(x,y)= W_p(x, y) : = 
\begin{cases}p&\text{ if }x<y,\\
1-p&\text{ if }x>y, \\ 
\frac{1}{2}&\text{ if }x=y , \\
\end{cases} 
\end{align} 
for $p \in [0, 1]$. 
In this case, a realization of $T(n,W)$ represents the outcome of a Condorcet random tournament among $n$ players whose rankings are determined by a latent uniform random permutation (generated using i.i.d. uniform random variables), with the better player winning each match with a fixed probability $p \in [\frac{1}{2}, 1)$ \cite{frank1968stochastic,luczak1996evolution,saile2020robust,manurangsi2022generalized,kunisky2024inference,williams2010fixing,kim2017singleelimination}.\footnote{Note that in the classical Condorcet random tournament the players' ranking is fixed in advance, while in our model it is determined by the ranking induced by latent uniform random variables.} Definition \ref{definition:W} also includes a latent variable version of the celebrated Bradley-Terry-Luce (BTL) model \cite{bradley1952rank,luce1959model}, 
where 
$$W(x, y) = \frac{f(x)}{f(x)+f(y)},$$
for some reward function $f:[0, 1] \rightarrow \R$. More generally, the $W$-random tournament model can be viewed as a latent variable analogue of the generalized random model studied in \cite{manurangsi2022generalized,saile2020robust} (see Section \ref{sec:examples} for  more examples).

Given an inhomogeneous random tournament $T_n\sim T(n,W)$, taking expectation on  both sides of \eqref{eq:N} gives, 
\begin{align}\label{eq:Etriangle}
\mathbb E N_\triangle(T_n) & = \binom{n}{3} \int_{[0,1]^3}  \left( W(x,y)W(y,z)W(z,x) \mathrm dx \mathrm dy \mathrm dz + W(x,z)W(z,y)W(y,x) \right)  \mathrm dx \mathrm dy \mathrm dz  \nonumber \\ 
& = 2 \binom{n}{3} \int_{[0,1]^3} W(x,y)W(y,z)W(z,x) \mathrm dx \mathrm dy \mathrm dz. 
\end{align} 
Consequently, we define the {\it Kendall-Smith coefficient of consistency for a tournamenton $W$} as 
\begin{align}\label{eq:transitivityW}
\zeta(W) :=1-8 \int_{[0,1]^3} W(x,y)W(y,z)W(z,x) \mathrm dx \mathrm dy \mathrm dz . 
\end{align}
This is the population (continuum) analogue of \eqref{eq:coefficientTn}. In particular, if $T_n \sim T(n, W)$ is a $W$-random tournament, then $\zeta(T_n)$ is a natural empirical estimate of $\zeta(W)$. The maximality result of  \citet{kendall1940method} for $\zeta(T_n)$, adapted to the continuum setting \cite{grzesik2023cycles} implies that $\zeta(W) \in [0, 1]$. Moreover, $\zeta(W)=1$ if and only if $W$ is the {\it transitive tournamenton} $W_{\mathrm{tr}} (x, y) = \bm 1\{x < y\}$, up to a measure preserving transformation (see \cite[Theorem 5.4]{thornblad2018decomposition}); and $\zeta(W)=0$ (which corresponds to the maximal possible density of circular triads) if and only if $W$ is degree-regular \cite[Theorem 9]{grzesik2023cycles} (see Remark \ref{remark:degree} for the definition of degree-regularity).

To assess the statistical validity of $\zeta(T_n)$ in estimating the population measure $\zeta(W)$ it is essential to understand the fluctuations (asymptotic distribution) of $\zeta(T_n)$ (equivalently that of $N_{\triangle}(T_n)$) and to provide uncertainty quantification through confidence intervals. In this paper, we address these questions by establishing the following results:

\begin{itemize} 

\item {\it Asymptotic distribution of $N_{\triangle}(T_n)$}: 
Our first main result gives a complete description of the asymptotic distribution of $N_\triangle(T_n)$, where $T_n \sim T(n, W)$ is sampled from any given tournamenton $W$. To this end, we introduce the notion of $\triangle$-regularity for $W$. Intuitively, this means that the density of circular triads incident to any given ``vertex'' of $W$ is constant almost everywhere (see Definition~\ref{defn:trianglefunction}). Depending on whether this regularity condition holds, three distinct fluctuation regimes emerge (see Theorem \ref{thm:N}): 
\begin{itemize} 

\item If $W$ is not $\triangle$-regular, then $N_\triangle(T_n)$ has fluctuations of order $n^{\frac{5}{2}}$ and the limiting distribution is Gaussian (after appropriate centering and scaling). 

\item However, if $W$ is $\triangle$-regular, then the above  mentioned Gaussian limit of $N_\triangle(T_n)$ is degenerate. In this case, $N_\triangle(T_n)$ has fluctuations of order $n^2$ and the limiting distribution has, in general, a Gaussian component and another independent (non-Gaussian) component which is a (possibly) infinite weighted sum of centered chi-squared random variables. The weights are determined by the spectrum of a graphon constructed from the two-point conditional densities of circular triads in $W$ (see Definition \ref{defn:triangleW}).

\item Finally, we show that the \emph{uniform tournamenton} $W\equiv \frac{1}{2}$ is the only $\triangle$-regular tournamenton for which the asymptotic distribution is degenerate even at the $O(n^2)$ scale (excluding other trivial degeneracies). In this case, the fluctuations are of order $n^{\frac{3}{2}}$, and the limiting distribution is Gaussian, recovering the classical result of Moran~\cite{moran1947method}.

\end{itemize}

\item {\it Confidence interval for $\zeta(W)$}: Next, using the distributional results described above, we design an algorithm for constructing an asymptotically valid confidence interval for $\zeta(W)$. In this direction, we obtain the following results:  

\begin{itemize}

\item The first step in constructing a confidence interval for \(\zeta(W)\) is to estimate the quantiles of the limiting distribution of \(N_\triangle(T_n)\). When this limiting distribution is Gaussian, as in the non-\(\triangle\)-regular case, the quantiles can be obtained by consistently estimating the asymptotic variance. The $\triangle$-regular case is more delicate, since the limiting distribution in this regime has a non-Gaussian component. To address this, in Section \ref{sec:estimateN}, we introduce the \emph{tournamenton multiplier bootstrap}. On a high level, the method estimates the limiting distribution in the \(\triangle\)-regular regime by introducing external randomness through independent Gaussian multipliers and replacing the spectral weights in the limiting distribution with empirical eigenvalues of a matrix constructed from the observed tournament \(T_n\). We show that, conditional on $T_n$, the resulting estimated distribution converges to the desired limiting distribution under $\triangle$-regularity, without any additional assumptions on $W$ (see Theorem~\ref{thm:TnJ}).

\item The multiplier bootstrap alone, however, is not sufficient to construct a confidence interval for $\zeta(W)$, because we have no apriori knowledge regarding the $\triangle$-regularity or uniformity of $W$. To address this, we develop two intermediate tests: one for whether $W$ is $\triangle$-regular (see Section \ref{sec:r}), and another for whether $W$ is uniform, that is, $W\equiv \frac{1}{2}$ (see Section \ref{sec:uniform}). In particular, for the latter, we invoke the notion of {\it quasirandom forcing} tournaments from extremal combinatorics (see \cite{qt,localtournament,coregliano2017density,hancock2023no,kral2026sidorenko,noel2025forcing} and the references therein). This shows that certain fixed-size tournaments (such as a transitive tournament on four vertices \cite{coregliano2017density}) have the following intriguing property: if their density in a tournamenton matches the corresponding density in the uniform tournamenton $W_{1/2}$, then the tournamenton itself must be $W_{1/2}$ almost everywhere. Thus, by estimating the density of a single fixed-size tournament in $T_n$, one can construct a consistent test for the property that the underlying tournamenton is uniform (see Proposition \ref{prop:Huniform}). 

\item Finally, in Section \ref{sec:cWmethod}, combining the outcomes of the intermediate tests with the tournamenton multiplier bootstrap, we construct a confidence interval for the Kendall--Smith consistency coefficient $\zeta(W)$ that is asymptotically valid for all choices of $W$ (see Theorem~\ref{thm:Ln}). 
%Simulation results demonstrating the empirical performance of the proposed algorithm are presented in Section~\ref{sec:simulation}. 

\end{itemize}

\end{itemize}

%Throughout, we illustrate the general theory with various examples (see Section~\ref{sec:examples}) and in simulations (see Section~\ref{sec:simulation}). In the $\triangle$-regular regime, we also discuss conditions under which exactly one of the two components of the limiting distribution vanishes, that is, the limit is either Gaussian or has no Gaussian component. This leads us to a conjecture on the forcibility of the Condorcet random model which, if true, would imply that it is the only $\triangle$-regular tournamenton for which $N_\triangle(T_n)$ is asymptotically Gaussian (see Conjecture \ref{conjecture:W}).

%In other words, the Condorcet random model is the only $\triangle$-regular tournamenton for which $N_\triangle(T_n)$ is asymptotically Gaussian. We record this formally below: 

In the \(\triangle\)-regular regime, we also characterize the structure of tournamentons where exactly one of the two components of the limiting distribution, either the Gaussian or the non-Gaussian component, is degenerate. In particular, we show that the only \(\triangle\)-regular tournamenton for which \(N_\triangle(T_n)\) is asymptotically Gaussian corresponds to the Condorcet tournamenton  \eqref{eq:randomW}. This can also be equivalently formulated as a quasirandom-forcing type result for tournaments: if the density of circular triads passing through every pair of continuum vertices in a tournamenton is a constant, then it must be the Condorcet tournamenton \(W_p\), for some \(p\in[0,1]\), up to a measure preserving transformation. To the best of our knowledge, this is among the first forcing results for a non-uniform tournamenton, namely the Condorcet model.

\section{ Fluctuations of Directed 3-Cycles }

In this section, we present our results on the limiting distribution of circular triads in $W$-random tournaments. We begin by discussing the relevant preliminaries in Section~\ref{sec:W}. The distributional results for $N_\triangle(T_n)$ are formally stated in Section~\ref{sec:triangle}. The degeneracy properties of the limiting distribution are discussed in Section~\ref{sec:limitregular}. In Section~\ref{sec:examples} we compute the limiting distribution in various examples.

\subsection{Preliminaries} 
\label{sec:W}

A quantity that will play a central role in our analysis is the homomorphism density of a fixed directed graph $H=(V(H),E(H))$ (with no self-loops or multiple directed edges), in a tournamenton $W$, which is defined as follows:
\begin{equation}
    t(H,W)=\int_{[0,1]^{|V(H)|}}\prod_{(s,t)\in E(H)}W(x_s,x_t)\prod_{s\in V(H)}\mathrm{d}x_s.
    \label{eq:tHW}
\end{equation}
Note that, as in the graphon setting \cite{lovasz_book}, \eqref{eq:tHW} may be viewed as the natural continuum analogue of the homomorphism density of a fixed directed graph $H=(V(H),E(H))$ in a finite tournament $T=(V(T),E(T)),$ which is defined as: 
$$t(H,T)=\frac{|\mathrm{hom}(H,T)|}{|V(T)|^{|V(H)|}},$$
where $\mathrm{hom}(H,T)$ is the collection of homomorphisms, that is, directed-edge-preserving maps, from $V(H)$ into $V(T)$. It is also helpful to define the {\it empirical tournamenton} associated with $T$ as: 
\begin{align}\label{eq:Tn} 
W^{T}(x, y) = 
\begin{cases} 
1 &\text{ if } \lceil |V(T)| x\rceil \rightarrow \lceil |V(T)|y \rceil , \\
\frac{1}{2}&\text{ if } \lceil |V(T)| x\rceil = \lceil |V(T)|y \rceil , \\ 
0 &\text{ otherwise}, \\ 
\end{cases} 
\end{align}  
for $x, y \in [0, 1]$. 
In other words, we partition $[0,1]^2$ into $|V(T)|^2$ squares, each of side length $\frac{1}{|V(T)|}$. Then define $W^T(x,y)=\frac{1}{2}$, whenever $(x,y)$ lies in a diagonal square. For $(x,y)$ in an off-diagonal square define $W^T(x,y)=1$, if there is a directed edge $\lceil |V(T)|x \rceil \to \lceil |V(T)|y \rceil$ in $T$, and $W^T(x,y)=0$ otherwise. 
%The error term \eqref{} accounts for maps in which two vertices of \(H\) are sent to the same vertex of \(T\), or equivalently for the diagonal blocks of \(W^T\).

In the special case when $H$ is a circular triad, oriented either clockwise ($\cc$) or anti-clockwise ($\cac$), the homomorphism density \eqref{eq:tHW} simplifies to
\begin{equation*} 
t(\cc, W) = t(\cac, W)=\int_{[0,\,1]^3}W(x,\,y)W(y,\,z)W(z,\,x)\mathrm{d}x\mathrm{d}y\mathrm{d}z.
    \label{eq:triad_hom}
\end{equation*} 
Hence, when $T_n \sim T(n, W)$ is a $W$-random tournament, \eqref{eq:Etriangle} can be expressed as: 
\begin{align}\label{eq:EN}
   \E [N_\triangle(T_n)] 
   %& =\sum_{1\le u<v< w\le n} \E\left[ W(\eta_u, \eta_v) W(\eta_v, \eta_w) W(\eta_w, \eta_u) +  W(\eta_u, \eta_w) W(\eta_w, \eta_v)  W(\eta_v, \eta_u) \right]  \nonumber \\ 
   & = 2\binom{n}{3} t(\cc, W) . 
\end{align} 
We now introduce the notion of conditional homomorphism densities for circular triads and the notion of $\triangle$-{\it regularity}. Throughout, $\eta_1, \eta_2, \eta_3$ will denote i.i.d. $\Unif([0,1])$ random variables.

\begin{defn}[1-point conditional homomorphism density and $\triangle$-regularity] 
Given a tournamenton $W$, the 1-point conditional homomorphism density function for the circular triad (oriented either clockwise or anticlockwise) is defined as: For $x \in [0, 1]$, 
\begin{align}
    t^{\triangle}_{W}(x) &= \E\left[ W(\eta_1, \eta_2) W(\eta_2, \eta_3) W(\eta_3, \eta_1) \mid \eta_{1}=x \right].     \nonumber \\ 
&=\int_{[0,1]^2}W(x,y)W(y,z)W(z,x)\mathrm{d}y\mathrm{d}z .
    \label{eq:g}
\end{align} 
The tournamenton $W$ is said to be $\triangle$-{\it regular}, if the function $t^{\triangle}_{W}(x)$ is constant for almost every $x \in [0,\,1].$ (Note that if $t^{\triangle}_{W}(x)$ is constant almost everywhere, by integrating both sides of \eqref{eq:g} it follows that the constant must be $t(\cc, W)$.) 
\label{defn:trianglefunction}
\end{defn}

\begin{remark}
\label{remark:conditional}
If we interpret the interval $[0,1]$ as a continuum of vertices, then $t^{\triangle}_{W}(x)$ can be viewed as the density of circular triads when one of their vertices is mapped to the vertex $x \in [0,1]$ in the continuum. Hence, a tournamenton $W$ is $\triangle$-regular if the density of circular triads incident to almost every continuum vertex $x \in [0,1]$ is constant. Clearly, the uniform tournamenton $W_{1/2}$ is $\triangle$-regular. More generally, the Condorcet tournamenton \eqref{eq:randomW} is also $\triangle$-regular, for every $p \in [0, 1]$. In particular, in this case by a direct computation it can be shown that $t^{\triangle}_{W}(x) = \frac{1}{2} p (1-p)$, for all $x \in [0, 1]$. 
\end{remark}

The notion of $\triangle$-regularity is also related to the more familiar notion of degree regularity.

\begin{remark}[Degree regularity]
For a tournamenton $W$, define its \textit{out-degree} and \textit{in-degree} functions, respectively, by
\begin{align}\label{eq:directeddegree}
d^{\uparrow}_W(x):=\int_0^1 W(x,y)\,\mathrm{d}y
\quad\text{and}\quad d^{\downarrow}_W(x):=\int_0^1 W(z,x)\,\mathrm{d}z , 
\end{align} 
for $x\in[0,1]$. Note that $d^{\uparrow}_W(x)+d^{\downarrow}_W(x)=1$, for almost every $x\in[0,1]$. The out-degree function, in particular, is the continuum analogue of the normalized \emph{score sequence} (the fraction of wins of a player in a tournament), which is a fundamental object that has been extensively studied over the years (see \cite{harary1966theory,landau1953dominance,winston1983asymptotic,bassan2026tournament,moon1968tournament} and the references therein). A tournamenton $W$ is said to be \textit{degree-regular} if $d^{\uparrow}_W(x)$ (equivalently $d^{\downarrow}_W(x)$) is constant almost everywhere on $[0,\,1]$. 
Note that if $W$ is degree-regular, then the constant value of $d^{\downarrow}_W(x)$ (and consequently, $d^{\uparrow}_W(x)$) has to be equal to $\frac{1}{2}$, since $\int_0^1 d^{\uparrow}_W(x) \mathrm d x = \int_0^1  d^{\downarrow}_W(x) \mathrm d x = \frac{1}{2}$. Clearly, the uniform tournamenton $W_{1/2}$ is degree-regular. Another simple example of a degree-regular tournamenton, which we will revisit again later, is the empirical tournamenton (recall \eqref{eq:Tn}) associated with the circular triad itself (see Figure \ref{fig:cc}): 
\begin{align}
W^{\cc}(x, y) = 
\begin{cases} 
1 &\text{ if } (x, y) \in [\frac{1}{3}, \frac{2}{3}] \times  [0, \frac{1}{3}] \bigcup  [\frac{2}{3}, 1] \times  [\frac{1}{3}, \frac{2}{3}]  \bigcup  [0, \frac{1}{3}] \times  [\frac{2}{3}, 1]  , \\
 \frac{1}{2} & \text{ if } (x, y) \in [0, \frac{1}{3}]^2 \bigcup  [\frac{1}{3}, \frac{2}{3}]^2 \bigcup [\frac{2}{3}, 1]^2 , \\ 
0 &\text{ otherwise} . 
\end{cases} 
\label{eq:cc}
\end{align}  
This can also be interpreted as a `rock--paper--scissors' type block tournament model: Players are assigned to three groups, corresponding to rock, paper, and scissors. Players within the same group are evenly matched, so the direction of an edge between two players in the same group is assigned at random, with probability $\frac{1}{2}$ in each direction. Between different groups, the edge directions follow the usual cyclic dominance rule: rock beats scissors, scissors beats paper, and paper beats rock. In Lemma \ref{lem:deg_reg} (see Appendix \ref{sec:degreepf}) we show that degree-regularity is a sufficient condition for $\triangle$-regularity. The converse, however, is not necessarily true (see Remark \ref{remark:randomdegree}). Another important property of degree-regularity is that it characterizes tournamentons attaining the maximal value $t(\cc,W) = \frac{1}{8}$ (equivalently the minimal value $\zeta(W)=0$) \cite[Theorem~9]{grzesik2023cycles}. \label{remark:degree}
\end{remark}

\begin{figure}[h]
\vspace{-0.475in}
    %\begin{minipage}{0.35\linewidth}
        \centering
        \includegraphics[width=0.35\linewidth]{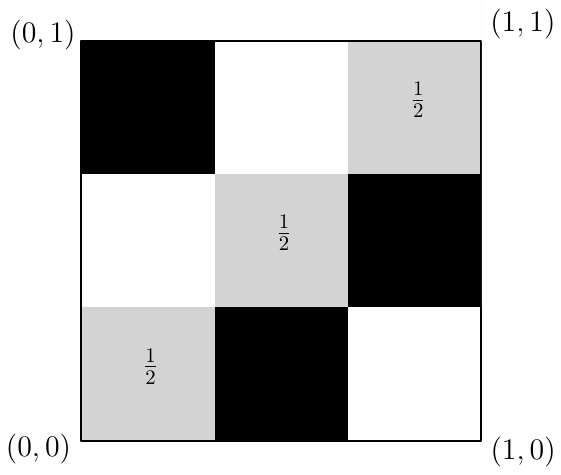}  \\ 
        %{(a)}
    %\end{minipage}
    \caption{\small{The empirical tournamenton \eqref{eq:cc}. The function takes the value $1$ on the black regions, $0$ on the white regions, and $\frac{1}{2}$ on the grey region. } } 
    \label{fig:cc}
\end{figure}

Next, we define the 2-point conditional homomorphism density of circular triads and properties of the associated kernel: 

\begin{defn}[2-point conditional homomorphism density] 
\label{defn:triangleW} 
Given a tournamenton $W$, the 2-point conditional homomorphism density function for the clockwise oriented circular triad $(\cc)$ is defined as: 
For $x, y \in [0, 1]$, 
\begin{align*}
    t_{W}(\cc, x, y) &= \E\left[ W(\eta_1, \eta_2) W(\eta_2, \eta_3) W(\eta_3, \eta_1) \mid \eta_{1}=x, \eta_2 = y \right].     \\ 
&= W(x,\,y)\int_0^1W(y,\,z)W(z,\,x)\mathrm{d}z  .
\end{align*}  
Similarly, the 2-point conditional homomorphism density function for the anti-clockwise oriented circular triad $(\cac)$ is defined as  
\begin{align*}
    t_{W}(\cac, x, y) &= \E\left[ W(\eta_1, \eta_3) W(\eta_3, \eta_2) W(\eta_2, \eta_1) \mid \eta_{1}=x, \eta_2 = y \right].     \\ 
&= W(y,\,x)\int_0^1W(x,\,z)W(z,\,y)\mathrm{d}z  =  t_{W}(\cc, y, x).
\end{align*}  
Finally, define {\it symmetrized $2$-point conditional homomorphism density} as: 
\begin{align}\label{eq:W_tr}
    \triangle_{W}(x,\,y) :=\frac{t_{W}(\cc, x, y) + t_{W}(\cac, x, y) }{2} . 
\end{align} 
\end{defn}

\begin{remark}  
Intuitively,  $t_{W}(\cc, x, y)$ (respectively, $t_{W}(\cac, x, y)$) can be interpreted as the homomorphism density of $\cc$ (respectively, $\cac$) incident on the continuum vertices $x, y \in [0, 1]$. It is also helpful for us to define the analogue of the 2-point conditional homomorphism density for the directed path of length 2. Specifically, for the oriented 2-path $(\lp)$ define: 
\begin{align}\label{eq:rpxy} 
    t_{W}(\lp, x, y) &= \E\left[ W(\eta_2, \eta_3) W(\eta_3, \eta_1) \mid \eta_{1}=x, \eta_2 = y \right].   \nonumber  \\ 
&= \int_0^1W(y,\,z)W(z,\,x)\mathrm{d}z.
\end{align}  
In this notation, \eqref{eq:W_tr} can be expressed as: 
\begin{align}
    \triangle_{W}(x,\,y) :=\frac{W(x, y) t_{W}(\lp, x, y) + W(y, x) t_{W}(\lp, y, x) }{2} . 
    \label{eq:Wp}
\end{align} 
From the definitions it is easy to check that (see Lemma \ref{lem:rp} in Appendix \ref{sec:degreepf}): 
\begin{align}\label{eq:tWrplp} 
t_{W}(\lp, x, y) - t_{W}(\lp, y, x) = d^{\uparrow}_W(y)-d^{\uparrow}_W(x), 
\end{align}
for $x, y \in [0, 1]$. Hence, the kernel $t_{W}(\lp, x, y)$ is symmetric if and only if $W$ is degree-regular. In this case, \eqref{eq:Wp} simplifies to $ \triangle_{W}(x,\,y) = \frac{1}{2} t_{W}(\lp, x, y)$. 
\end{remark}

Note that the kernel $\triangle_{W}$ is a symmetric function from $[0,\,1]^2$ to $[0, 1]$, that is, it is a graphon. Thus, it defines an operator $T_{\triangle_{W}}$ on $L^2[0,\,1]$ as follows:
$$(T_{\triangle_{W}} f)(x)=\int_0^1 \triangle_{W}(x,\,y)f(y)\mathrm{d}y.$$ 
for $f \in L^2[0,\,1]$. 
This is a symmetric Hilbert-Schmidt operator, thus it is compact and has
a discrete spectrum, that is, it has a countable multiset of non-zero real eigenvalues, which we denote by $\Spec(\triangle_{W}).$ Observe that 
\begin{equation}
    \sum_{\lambda\in\Spec(\triangle_{W})}\lambda^2=\int_{[0,\,1]^2}(\triangle_{W}(x,\,y))^2\mathrm{d}x\mathrm{d}y\le1 , 
    \label{eq:summable}
\end{equation}
since $\triangle_{W}$ is bounded by $1.$ Also, recalling \eqref{eq:g}, note that
\begin{align}\label{eq:Wdegree}
\int_0^1 \triangle_W(x,y)\,\mathrm{d}y = t_W^{\triangle}(x) , 
\end{align}
for all $x\in[0,1]$. Therefore, if $W$ is $\triangle$-regular, that is, if $t_W^{\triangle}(x)=t(\cc, W)$ almost everywhere on $[0,1]$, then $t(\cc, W)$ is an eigenvalue of $T_{\triangle_W}$ with corresponding eigenfunction $\phi\equiv \bm{1}$. In this case, we write $\Spec^-(\triangle_W)$ for the multiset obtained from $\Spec(\triangle_W)$ by decreasing the multiplicity of the eigenvalue $t(\cc, W)$ by $1$. 
%Then $\Spec^-(\triangle_{W})$ gives the spectrum of an operator defined on the orthogonal compliment of the space of constant functions in $L^2[0,\,1].$

\subsection{ Asymptotic Distribution of $N_{\triangle}(T_n)$}
\label{sec:triangle}

Throughout, $\cccc$ will denote the directed graph on 5 vertices obtained by joining two clockwise oriented 3-cycles at a vertex. The homomorphism density of this directed graph in a tournamenton $W$ can be expressed as (recall \eqref{eq:tHW}):
\begin{align}
    t(\cccc \,,\, W) & = \int_{[0,\,1]^5} W(x,\,y)W(y,\,z)W(z,\,x) W(x,\,y')W(y',\,z')W(z',\,x) \mathrm{d}x\mathrm{d}y\mathrm{d}z \mathrm{d}y'\mathrm{d}z ' \nonumber \\ 
    & = \int_0^1 t^{\triangle}_W(x)^2 \mathrm{d} x , 
    \label{eq:ccjoin}
\end{align} 
where $t_{W}^\triangle$ is as defined in \eqref{eq:g}. With this notation, we are now ready to state our result on the asymptotic distribution of the number of circular triads in $W$-random tournaments. The proof is given in Section \ref{sec:Npf}.

\begin{thm} Fix a tournamenton $W$ with $\zeta(W) \in [0, 1)$ and consider the $W$-random tournament $T_n \sim T(n, W)$. Then for $N_{\triangle}(T_n)$ as defined in \eqref{eq:N} the following hold: 
\begin{enumerate}

\item[$(1)$] For any $W$, 
\begin{equation}
    \frac{N_{\triangle}(T_n)- 2\binom{n}{3} t(\cc, W)  }{n^{\frac{5}{2}}} \dto N(0,\,\sigma^2_W) , 
    \label{eq:irreg}
\end{equation}
where 
\begin{align}
\sigma^2_W := t(\cccc \,,\, W) - t(\cc, \, W)^2 = \int_0^1 t^{\triangle}_W(x)^2 \mathrm{d} x  - \left( \int_0^1 t^{\triangle}_W(x) \mathrm{d} x \right)^2 . 
\label{eq:irvariance}
\end{align} 
Moreover, the distribution in \eqref{eq:irreg} is non-degenerate $($that is, $\sigma^2_W  > 0)$ if and only if $W$ is not $\triangle$-regular.

\item[$(2)$] If $W$ is $\triangle$-regular, then
\begin{equation}
    \frac{N_{\triangle}(T_n) - 2\binom{n}{3} t(\cc, W) }{n^{2}}\dto \tau_W \cdot Z+\sum_{\lambda\in\Spec^-(\triangle_{W})}\lambda(Z_\lambda^2-1) , 
    \label{eq:reg}
\end{equation}
where $Z, \{Z_\lambda\}_{\lambda\in\Spec^-(\triangle_{W})}$ are i.i.d. $N(0,\,1)$ and 
\begin{equation}
    \tau_W^2 : =\frac{1}{2} \int_{[0, 1]^2} W(x,\,y)W(y,\,x)(t_W( \lp, x, y )-t_W( \lp, y, x ))^2 \mathrm{d} x \mathrm{d} y .  
    \label{eq:tau_W}
\end{equation}  
Moreover, the limiting distribution in \eqref{eq:reg} is non-degenerate if and only if $W(x,\,y) \ne \frac{1}{2}$ on a set of positive measure. 

\item[$(3)$] If $W(x,\,y)=\frac{1}{2}$ almost everywhere, then 
\begin{align}\label{eq:Wuniform}
\frac{N_{\triangle}(T_n)-\frac{1}{4} \binom{n}{3} }{n^{\frac{3}{2}}}\dto N\left(0,\,\tfrac{1}{32}\right). 
\end{align} 

\end{enumerate}

\label{thm:N}
\end{thm}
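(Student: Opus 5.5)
The plan is to use a two-layer Hoeffding (Efron--Stein) decomposition that separates the latent randomness $\{\eta_u\}$ from the edge randomness. Write $a_{uv}=W(\eta_u,\eta_v)+\varepsilon_{uv}$, where, conditionally on $\eta$, the $\varepsilon_{uv}$ ($u<v$) are independent and centered, with variance $W(\eta_u,\eta_v)W(\eta_v,\eta_u)$, and $\varepsilon_{vu}=-\varepsilon_{uv}$. Substituting into \eqref{eq:N} and expanding each triple summand $a_{uv}a_{vw}a_{wu}+a_{uw}a_{wv}a_{vu}$ gives
\[
N_\triangle(T_n)-\E N_\triangle(T_n)=L_1+Q_2+E_1+R ,
\]
whose pieces are as follows.
\begin{itemize}
\item[(i)] $L_1$ is the Hájek projection onto the $\eta_u$'s, namely $L_1=2\binom{n-1}{2}\sum_u\bigl(t^\triangle_W(\eta_u)-t(\cc,W)\bigr)$.
\item[(ii)] $Q_2$ is the degenerate second-order $U$-statistic in $\eta$. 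Up to lower-order terms it equals $2(n-2)\sum_{u<v}\bigl(\triangle_W(\eta_u,\eta_v)-t^\triangle_W(\eta_u)-t^\triangle_W(\eta_v)+t(\cc,W)\bigr)$.
\item[(iii)] $E_1$ is the part linear in the edge noise:
\[
E_1=\sum_{u<v}\varepsilon_{uv}\sum_{w\neq u,v}\bigl(W(\eta_v,\eta_w)W(\eta_w,\eta_u)-W(\eta_u,\eta_w)W(\eta_w,\eta_v)\bigr).
\]
By the law of large numbers in $w$, the inner sum is $(n-2)\bigl(t_W(\lp,\eta_u,\eta_v)-t_W(\lp,\eta_v,\eta_u)\bigr)+O_P(\sqrt n)$.
\item[(iv)] $R$ collects everything else: third-order $\eta$-terms, mixed $\varepsilon\eta$ fluctuations, and terms quadratic or cubic in $\varepsilon$.
\end{itemize}
Direct second-moment computations, using orthogonality of the components, should give $\Var(L_1)\asymp n^5\sigma_W^2$, $\Var(Q_2),\Var(E_1)=O(n^4)$, and $\Var(R)=O(n^3)$.

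For part (1), divide by $n^{5/2}$. The classical CLT applied to $L_1$ gives $N(0,\sigma_W^2)$, since $2\binom{n-1}{2}\sim n^2$, and all the other terms are $o_P(n^{5/2})$. The identity $\sigma^2_W=\Var(t^\triangle_W(\eta_1))$ is immediate from \eqref{eq:ccjoin}, so $\sigma_W^2=0$ if and only if $W$ is $\triangle$-regular.

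For part (2), $\triangle$-regularity kills $L_1$ and also the first-order corrections inside $Q_2$. Then $Q_2/n^2=\frac{2}{n}\sum_{u<v}\bigl(\triangle_W(\eta_u,\eta_v)-t(\cc,W)\bigr)+o_P(1)$. This is a degenerate $U$-statistic whose kernel has spectrum $\Spec^-(\triangle_W)$, because the eigenfunction $\bm 1$ has been removed by the centering. The standard limit theorem for degenerate $U$-statistics (Gregory/Serfling) then yields $\sum_{\lambda\in\Spec^-}\lambda(Z_\lambda^2-1)$.

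For $E_1/n^2$, conditionally on $\eta$, it is a sum of independent centered terms. Its conditional variance is
\[
n^{-2}\sum_{u<v}W(\eta_u,\eta_v)W(\eta_v,\eta_u)\bigl(t_W(\lp,\eta_u,\eta_v)-t_W(\lp,\eta_v,\eta_u)\bigr)^2 ,
\]
which converges a.s. to $\tau_W^2$. Lindeberg holds because the summands are $O(n^{-1})$. Hence $E_1/n^2$ is conditionally asymptotically $N(0,\tau_W^2)$. Since $Q_2$ is $\eta$-measurable and this conditional limit does not depend on $\eta$, the characteristic functions factor, which gives joint convergence with an independent $Z$. Finally $R/n^2\to0$ in $L^2$.

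The nondegeneracy claim in part (2) is the step I expect to be the main obstacle. If the limit is degenerate, then $\Spec^-(\triangle_W)=\emptyset$, so $\triangle_W\equiv t(\cc,W)$ a.e., and $\tau_W=0$. By \eqref{eq:tWrplp}, $\tau_W=0$ means $W(x,y)W(y,x)\bigl(d^\uparrow_W(x)-d^\uparrow_W(y)\bigr)^2=0$ a.e. The route I would try first is to invoke (or prove at this point) the forcing statement announced in the introduction: a constant $\triangle_W$ forces $W=W_p$ up to a measure-preserving map. For $W_p$ the out-degree is $d^\uparrow_W(x)=p(1-x)+(1-p)x$, so $\tau_{W_p}=0$ forces $p\in\{0,\tfrac12,1\}$. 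The values $p\in\{0,1\}$ give the transitive tournamenton, which is excluded by $\zeta(W)<1$. Hence $p=\tfrac12$. Conversely, $W\equiv\frac12$ is degree-regular, so $\tau_W=0$, and $\triangle_W\equiv\frac18$, so $\Spec^-=\emptyset$. If the forcing result is not available as a black box, I would try to derive the needed structure directly from $\triangle_W\equiv c$ together with the vanishing of $W(1-W)$ wherever degrees differ.

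For part (3), with $W\equiv\frac12$ all $\eta$-terms vanish, and $E_1$ vanishes as well since $t_W(\lp,\cdot,\cdot)\equiv\frac14$ is symmetric. Writing $a_{uv}=\tfrac12+\varepsilon_{uv}$ with $\varepsilon_{uv}=\pm\tfrac12$ and using $\varepsilon_{vu}=-\varepsilon_{uv}$, the cubic terms cancel between the two orientations. The surviving statistic is $N-\tfrac14\binom n3=\sum_{u<v<w}\bigl(\varepsilon_{uv}\varepsilon_{vw}+\varepsilon_{vw}\varepsilon_{wu}+\varepsilon_{wu}\varepsilon_{uv}\bigr)$. This is a degenerate quadratic form in the independent signs, which can be rewritten as $\sum_{\text{pairs of edges sharing a vertex}}$ with coefficients $\pm1$. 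Its variance is $\frac1{16}\cdot 3\binom n3\sim n^3/32$. Asymptotic normality follows from a martingale CLT, or de Jong's fourth-moment theorem, since the maximal influence of each edge is $O(n)$, which is $o(n^{3/2})$. This recovers Moran's result.
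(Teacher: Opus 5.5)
Your proof is correct and rests on the same orthogonal decomposition as the paper. Your $L_1$, $Q_2$ and $E_1$ are the summed projections $f_{K_{\{1\}}}$, $f_{E_{\{1,2\}}}$ and $f_{K_{\{1,2\}}}=\varepsilon_{12}\bigl(t_W(\lp,\eta_1,\eta_2)-t_W(\lp,\eta_2,\eta_1)\bigr)$ of Lemmas~\ref{lem:eta}--\ref{lem:proj}, and your quadratic noise terms are the $f_{E_{\{12,23\}}}$-type projections.

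The difference is in how the limit laws are obtained.
\begin{itemize}
\item \emph{The paper} feeds these projections into Theorems~1 and~2 of \cite{janson1991asymptotic}. These give the $n^{5/2}$ Gaussian limit, the $n^2$ limit with an independent Gaussian component, and the principal-degree-3 Gaussian limit. Janson's Lemma~2 also covers the degenerate boundary cases.
\item \emph{You} prove the limits directly: the classical CLT for $L_1$, Gregory--Serfling for the degenerate kernel $\triangle_W-t(\cc,W)$ (whose nonzero spectrum is $\Spec^-(\triangle_W)$), a conditional Lindeberg CLT for the edge noise given $\eta$, and a characteristic-function factorization for independence.
\end{itemize}
Your route is longer but self-contained. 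It also makes visible that $\tau_W Z$ comes purely from edge noise and the chi-square part purely from the latent variables. As a by-product you get that the cubic noise terms cancel and the quadratic noise part does not depend on $W$, which is what Lemma~\ref{lem:N1} encodes.

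For non-degeneracy in (2), your argument is exactly the paper's: Lemma~\ref{lem:f2}, then Theorem~\ref{thm:W} via Proposition~\ref{prop:half}. You should simply invoke Theorem~\ref{thm:W}. Your ``derive it directly'' fallback is not a proof, and there is no evident shortcut. The $\triangle$-cosymmetric class contains random-free and mixed, non-degree-regular examples, so the full forcing argument is needed.

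One step in (3) needs tightening. Negligible maximal influence alone does not give a Gaussian limit for a degenerate quadratic form: $\sum_{i\neq j}\xi_i\xi_j$ has negligible influences but a $\chi^2$ limit. De Jong's theorem also requires $\E Q^4/(\E Q^2)^2\to 3$. That condition does hold here, for the following reason.
\begin{itemize}
\item With $s_x=\sum_{y\neq x}\varepsilon_{xy}$, the quadratic part equals $-\frac12\sum_x\bigl(s_x^2-\E s_x^2\bigr)$.
\item Its coefficient matrix is therefore $-\frac12(B^\top B-2I)$, where $B$ is the oriented vertex--edge incidence matrix of $K_n$.
\item Since $BB^\top=nI-\bm{1}\bm{1}^\top$, this matrix has operator norm $O(n)$ and Frobenius norm of order $n^{3/2}$. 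Together with the small influences, this forces the fourth cumulant ratio to $0$.
\end{itemize}
Alternatively, simply cite Moran, as the paper does.
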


Theorem \ref{thm:N} provides a complete characterization of the limiting distribution of $N_\triangle(T_n)$ in the $W$-random tournament model. Specifically, three distinct asymptotic regimes emerge (depending on the structure of $W$):

\begin{itemize} 

\item When $W$ is not $\triangle$-regular, the fluctuations of $N_{\triangle}(T_n)$ are of order $n^{\frac{5}{2}}$ and the limiting distribution is Gaussian (as in \eqref{eq:irreg}). 

\item When $W$ is $\triangle$-regular,  the fluctuations of $N_{\triangle}(T_n)$ are of order $n^{2}$ and the limiting distribution has, in general, a Gaussian component and another independent (non-Gaussian) component which is a (possibly) infinite weighted sum of centered chi-squared random variables (as in \eqref{eq:reg}). Note that although the second term in \eqref{eq:reg} involves an infinite sum, it converges in $L^2$ and almost surely due to \eqref{eq:summable}.

\item Interestingly, the limit in \eqref{eq:reg} is degenerate, that is, both the Gaussian and the non-Gaussian components have zero variances, if and only if $W$ is the uniform tournamenton $W_{1/2}$. In this case, the fluctuations of $N_{\triangle}(T_n)$ are of order $n^{\frac{3}{2}}$ and the limiting distribution is again Gaussian (as in \eqref{eq:Wuniform}), which is the classical result of \citet{moran1947method}. 

\end{itemize}
The proof of Theorem~\ref{thm:N} relies on the asymptotic theory of generalized $U$-statistics developed by \citet{janson1991asymptotic} (see also \cite[Chapter~11]{SJIII}). This allows us to decompose $N_{\triangle}(T_n)$, which is a generalized $U$-statistic of order 3 (see Lemma~\ref{lem:N1}), into projections onto orthogonal subspaces of 
$L^2([0,1])$ indexed by the subgraphs of the triangle (the complete graph on $3$ vertices). In particular, when $W$ is not $\triangle$-regular, the limiting distribution is governed by the first-order projections onto the individual latent variables $\eta_u$, for $1 \leq u \leq n$. In the $\triangle$-regular regime, the first-order projection is degenerate and the fluctuations are instead driven by the second-order projections: those onto pairs of latent variables $\eta_u, \eta_v$ and the edge indicator associated with the vertex pair $(u, v)$, for $1 \leq u < v \leq n$. Finally, when $W \equiv \frac{1}{2}$, both the first and second-order projections are degenerate, and the limiting distribution is determined by the third-order projection on the 2-star (that involve three latent variables $\eta_u, \eta_v, \eta_w$ the edge indicators associated with the vertex pairs $(u, v)$ and $(v, w)$, for $1 \leq u < v < w \leq n$).

\begin{remark} 
Observe that Theorem~\ref{thm:N} assumes $\zeta(W)<1$, or equivalently $t(\cc,W)>0$. This is without loss of generality, since $t(\cc,W)=0$ holds if and only if $W$ is weakly equivalent to the transitive tournamenton $W_{\mathrm{tr}}(x,y)=\bm 1\{x<y\}$ \cite[Theorem~5.4]{thornblad2018decomposition}.\footnote{Two tournamentons $W$ and $\tilde W$ are said to be weakly equivalent if there exists measure preserving maps $\psi, \phi: [0, 1] \rightarrow [0, 1]$ such that $W^\psi=\tilde W^\phi$, almost everywhere, where $W^\psi(x, y) := W(\psi(x), \psi(y))$ and $\tilde W^\phi(x, y) := \tilde W(\phi(x), \phi(y))$.} In this case, $\mathbb{E}[N_{\triangle}(T_n)]=0$ (by \eqref{eq:EN}), and hence $N_{\triangle}(T_n)=0$, almost surely. 
\label{remark:transitiveW}
\end{remark}

A special case for which Theorem~\ref{thm:N} simplifies considerably is when $W$ is degree-regular (recall Remark~\ref{remark:degree}). Note that degree-regularity implies $\triangle$-regularity (by Lemma~\ref{lem:deg_reg}), hence, the limit in \eqref{eq:irreg} is always degenerate. Furthermore, under degree-regularity, the kernel $t_W(\lp,x,y)$ is symmetric (by Lemma~\ref{lem:rp}), which implies, $\tau_W=0$ (recall \eqref{eq:tau_W}). Therefore, under degree regularity, we have the following result:

\begin{corollary} 
Suppose $W$ is a degree-regular tournamenton with $\zeta(W) \in [0, 1)$ and $T_n \sim T(n, W)$. Then the following hold: 
\begin{enumerate}

\item[$(1)$] If $W(x,\,y) \ne \frac{1}{2}$ on a set of positive measure, then 
\begin{equation*}
    \frac{N_{\triangle}(T_n)-\E[N_{\triangle}(T_n)] }{n^{2}}\dto \sum_{\lambda\in\Spec^-(\triangle_{W})}\lambda(Z_\lambda^2-1) , 
    %\label{eq:degreeregular}
\end{equation*}
where $\{Z_\lambda\}_{\lambda\in\Spec^-(\triangle_{W})}$ are i.i.d. $N(0,\,1)$. 

\item[$(2)$] If $W(x,\,y)=\frac{1}{2}$ almost everywhere, then \eqref{eq:Wuniform} holds. 

\end{enumerate}
\label{cor:regularN}
\end{corollary}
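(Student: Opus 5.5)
The corollary is a direct specialization of Theorem~\ref{thm:N}, so the plan is to verify that degree-regularity switches off the two pieces of the general theorem that do not appear in the statement: the $n^{5/2}$ Gaussian term and the Gaussian component $\tau_W Z$ in \eqref{eq:reg}. First, by Lemma~\ref{lem:deg_reg}, a degree-regular $W$ is $\triangle$-regular. Part (1) of Theorem~\ref{thm:N} then gives $\sigma^2_W=0$, so the first-order fluctuations are degenerate. We are therefore in the setting of part (2) or part (3) of Theorem~\ref{thm:N}, according to whether $W\equiv\frac12$ almost everywhere or not. We also note that $\E[N_\triangle(T_n)]=2\binom{n}{3}t(\cc,W)$ by \eqref{eq:EN}, so the centering in the corollary agrees with the one in \eqref{eq:reg}.

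Next we show that $\tau_W=0$. By \eqref{eq:tWrplp} (Lemma~\ref{lem:rp}),
\[
t_W(\lp,x,y)-t_W(\lp,y,x)=d^{\uparrow}_W(y)-d^{\uparrow}_W(x).
\]
Degree-regularity means $d^{\uparrow}_W\equiv\frac12$ almost everywhere, so the right-hand side vanishes for almost every $(x,y)\in[0,1]^2$. Substituting into \eqref{eq:tau_W}, the integrand is zero almost everywhere, and hence $\tau_W^2=0$. The limit in \eqref{eq:reg} thus reduces to $\sum_{\lambda\in\Spec^-(\triangle_W)}\lambda(Z_\lambda^2-1)$, which is exactly part (1) of the corollary under the hypothesis that $W\neq\frac12$ on a set of positive measure. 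That same hypothesis is what Theorem~\ref{thm:N}(2) requires for non-degeneracy, so the conclusion holds as stated.

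Part (2) of the corollary is verbatim part (3) of Theorem~\ref{thm:N}. The uniform tournamenton is degree-regular, and $\zeta(W_{1/2})=1-8\cdot\frac18=0<1$, so it satisfies the hypotheses of the corollary. No step here is a real obstacle. The only point needing care is the almost-everywhere bookkeeping: $d^\uparrow_W$ is constant only almost everywhere, so the exceptional set in $[0,1]^2$ is contained in $(N\times[0,1])\cup([0,1]\times N)$ for a null set $N\subset[0,1]$, and this set has measure zero.
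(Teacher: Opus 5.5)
Your proposal is correct and follows essentially the same route as the paper. Degree-regularity gives $\triangle$-regularity via Lemma~\ref{lem:deg_reg}, and the identity \eqref{eq:tWrplp} with $d^{\uparrow}_W\equiv\frac12$ forces $\tau_W=0$; the two parts then follow from Theorem~\ref{thm:N}(2) and (3). Your extra checks on the centering, on $W_{1/2}$ satisfying the hypotheses, and on the null sets are fine.
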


\subsection{Degeneracies of the Limiting Distribution}
\label{sec:limitregular}

We know from Theorem~\ref{thm:N} the precise conditions under which the limiting distributions in each regime are degenerate. Another natural question that arises in the \(\triangle\)-regular case is the following: under what conditions is exactly one of the two components of the limiting distribution in \eqref{eq:reg} degenerate, that is, either the Gaussian component or the non-Gaussian component is degenerate, but not both?

\begin{enumerate} 

\item {\it Degeneracy of the Gaussian component in \eqref{eq:reg}}: This happens if and only if (recall \eqref{eq:tau_W}): 
$$\tau_W^2 = \frac{1}{2} \int_{[0, 1]^2} W(x,\,y) (1 - W(x,\,y)) ( d^{\uparrow}(x) - d^{\uparrow}(y) )^2 \mathrm{d} x \mathrm{d} y  =0.$$ 
This is equivalent to the condition (recall \eqref{eq:tWrplp}):  For almost every $(x,y)\in [0, 1]$,  
\begin{align}\label{eq:varianceregular} 
\text{ either }  W(x,y) \in \{0,1\} \quad \text{ or }  \quad d_W^\uparrow(x)=d_W^\uparrow(y). 
\end{align} 
We refer to a tournamenton satisfying the above condition as $\triangle$-{\it cosymmetric}. 
Two extreme cases of the above are: (1) the kernel $t_W(\lp,x,y)$ is symmetric and (2) $W$ is $\{0, 1\}$-valued almost everywhere.  
In the first case, $W$ is degree-regular (by Lemma~\ref{lem:rp}). In the second case, $W$ is {\it random-free}, that is, once the latent uniform random variables $\{\eta_u\}_{1 \leq u \leq n}$ are chosen, the tournament is completely determined (there is no additional edge randomness).  There are also `mixed' examples, where $W$ is neither degree-regular nor random-free, but one still has $\tau_W^2 =0 $. For example, consider the 2-block tournamenton: 
$$W(x,y)=
\begin{cases}
\frac12, & (x,y)\in [0,\tfrac12) \times [0,\tfrac12) \bigcup [\tfrac12,1] \times [\tfrac12,1] ,\\
1, & (x,y)\in [0,\tfrac12) \times [\tfrac12,1] , \\ 
0, &  (x,y)\in [\tfrac12,1]  \times [0,\tfrac12) . 
\end{cases}
$$
In this case, 
$$
d_W^\uparrow(x)=
\begin{cases}
\frac34,& x\in [0,\tfrac12) ,\\ 
\frac14,& x\in [\tfrac12,1] .
\end{cases}
$$
Hence, $W$ is neither degree-regular nor random-free. Nevertheless, it is easy to check that \eqref{eq:varianceregular} still holds, and hence, $\tau_W^2=0$.

\item {\it Degeneracy of the non-Gaussian component in \eqref{eq:reg}}: This happens if and only if $\Spec^-(\triangle_{W})$ is empty. This is equivalent to the condition (recall \eqref{eq:Wp}): 
\begin{align}\label{eq:regularconstant}
\triangle_{W}(x, y)=\frac{W(x, y) t_{W}(\lp, x, y) + W(y, x) t_{W}(\lp, y, x) }{2} \text{ is constant},
\end{align}
almost everywhere on $[0, 1]^2$. We refer to a tournamenton satisfying the above condition as $\triangle$-{\it coregular}. From \eqref{eq:Wdegree} it follows that $\triangle$-coregularity implies $\triangle$-regularity. A simple example of $\triangle$-coregularity is the Condorcet tournamenton \eqref{eq:randomW}. Interestingly, it turns out that this is the only example, as shown in the following theorem: 
%that is, if a tournamenton is $\triangle$-coregular, then it must be equivalent to \eqref{eq:randomW}, for some $p \in [0, 1]$, up to measure-preserving transformations. We 
\end{enumerate}

\begin{thm} 
Suppose $W$ is $\triangle$-coregular, that is, \eqref{eq:regularconstant} holds. Then $W$ is weakly equivalent to the Condorcet tournamenton \eqref{eq:randomW}, for some $p \in [\frac{1}{2}, 1]$. 
\label{thm:W}
\end{thm}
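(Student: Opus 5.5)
The plan is to recenter $W$ around the uniform tournamenton and turn $\triangle$-coregularity into an operator identity for an antisymmetric kernel. Write $W=\tfrac12+D$, where $D(x,y)=-D(y,x)$ and $|D|\le\tfrac12$, and set $\delta(x):=\int_0^1 D(x,z)\,\mathrm dz=d^{\uparrow}_W(x)-\tfrac12$. Put $a=D(x,y)$, $b=D(y,z)$, $c=D(z,x)$. Antisymmetry makes the odd terms cancel:
\[
\bigl(\tfrac12+a\bigr)\bigl(\tfrac12+b\bigr)\bigl(\tfrac12+c\bigr)+\bigl(\tfrac12-a\bigr)\bigl(\tfrac12-b\bigr)\bigl(\tfrac12-c\bigr)=\tfrac14+ab+bc+ca .
\]
Integrating over $z$ in \eqref{eq:Wp} then gives
\[
2\triangle_W(x,y)=\tfrac14+D(x,y)\bigl(\delta(y)-\delta(x)\bigr)+(D^2)(x,y),
\]
where $D^2$ is the operator square, a symmetric negative semidefinite kernel. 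Hence \eqref{eq:regularconstant} becomes the operator identity
\[
D^2+D\Delta-\Delta D=\kappa J ,
\]
with $\Delta$ multiplication by $\delta$, $J$ the all-ones kernel, and $\kappa$ a constant. Before going further I will check that $W_p$ satisfies it. There $D(x,y)=(p-\tfrac12)\,\mathrm{sgn}(y-x)$ and $\delta(x)=(p-\tfrac12)(1-2x)$, and a short computation should give $\kappa=-(p-\tfrac12)^2$.

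Next I will extract scalar identities by testing the operator identity against suitable functions.
\begin{enumerate}
\item Applying it to $\mathbf 1$ gives $2D\delta-\delta^2=\kappa$ a.e.
\item Pairing with $\mathbf 1$ again gives $\kappa=2\langle D\delta,\mathbf 1\rangle-\|\delta\|^2=-3\|\delta\|_2^2$, using $\langle D\delta,\mathbf 1\rangle=-\langle \delta,D\mathbf 1\rangle=-\|\delta\|^2$.
\item Taking the trace, i.e.\ integrating the diagonal in the Hilbert--Schmidt sense, kills the commutator $D\Delta-\Delta D$ and gives $-\|D\|_{HS}^2=\kappa$.
\end{enumerate}
Together these yield the equality
\[
\|D\|_{HS}^2=3\|\delta\|_2^2 .
\]

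The core step is a Landau-type inequality for antisymmetric kernels bounded by $\tfrac12$:
\[
3\int\delta^2\le\int\!\!\int D^2 ,
\]
with equality exactly when, after a measure-preserving rearrangement ordering $[0,1]$ by $\delta$, $D(x,y)=q\,\mathrm{sgn}(y-x)$ for a constant $q$. This is the continuum analogue of the fact that score variance is maximized by transitive tournaments. For $|D|\equiv q$ it is exactly that fact, and equality then forces the transitive structure, as in \cite[Theorem 5.4]{thornblad2018decomposition}.

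To prove the inequality in general I would first try to write
\[
\|D\|_{HS}^2-3\|\delta\|^2
\]
as an integral of a nonnegative quantity over triples $(x,y,z)$. Concretely, I would expand $\delta(x)^2=\int\!\!\int D(x,y)D(x,z)\,\mathrm dy\,\mathrm dz$ and symmetrize over the orderings of $(x,y,z)$, so that each triple's contribution is a quadratic form in $(a,b,c)$. Sign-consistency of $a,b,c$ along the triple, meaning no cyclic pattern, should then appear as the equality condition. I expect this to be the main obstacle. It is not clear that the inequality holds without using the pointwise identity $2D\delta-\delta^2=\kappa$ as well, and if needed I will feed that in: since $\kappa=-3\|\delta\|^2$, it says $\delta^2-2D\delta$ is constant.

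If the pure inequality route fails, the fallback uses the structure directly. The relation $2D\delta=\delta^2-3\|\delta\|^2$ determines $D\delta$ from $\delta$. For Condorcet, $\delta$ is a uniformly distributed linear function and $D$ acts like $q$ times an integration operator. I would then try to show that the distribution of $\delta$ is uniform on $[-q,q]$ and that $D$ is forced to be the Volterra-type kernel, by testing the operator identity against the indicator functions $\mathbf 1\{\delta>t\}$.

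Once $D=q\,\mathrm{sgn}(\psi(y)-\psi(x))$ for a measure-preserving $\psi$, $W$ is weakly equivalent to $W_p$ with $p=\tfrac12+q$. If $q<0$, composing with $x\mapsto1-x$ replaces $p$ by $1-p$, so we may take $p\in[\tfrac12,1]$. The degenerate case $D\equiv0$ corresponds to $p=\tfrac12$.
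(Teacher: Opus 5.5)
Your reduction is correct. The formula $2\triangle_W=\frac14+D(x,y)(\delta(y)-\delta(x))+(D^2)(x,y)$ is the paper's \eqref{eq:DeltaExpansion}, and $\kappa=-3\|\delta\|_2^2$ is exactly how the paper reaches \eqref{eq:FxyJ}. The gap is the core step. The inequality $3\|\delta\|_2^2\le\|D\|_{\mathrm{HS}}^2$ is false for general antisymmetric $D$ with $|D|\le\frac12$. The paper's own 2-block tournamenton in Section~\ref{sec:limitregular} has $\delta=\pm\frac14$, so $3\|\delta\|_2^2=\frac{3}{16}>\frac18=\|D\|_{\mathrm{HS}}^2$. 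Degree-regular tournamentons, on the other hand, give ratio $0$. So $\|D\|_{\mathrm{HS}}^2=3\|\delta\|_2^2$ is not an extremal condition, and no equality-case argument can exist.

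Worse, the scalar facts you extract cannot determine $W$ at all, even with the pointwise identity fed in. Take the 2-block model with $D=\pm s$ between the halves, $s=\frac12$. Inside each half, put a rock--paper--scissors pattern on three sub-blocks of measure $\frac16$ with $D=\pm r$, where $r=\frac{\sqrt3}{2}s$. Then $\delta=\pm\frac{s}{2}$ and $D\delta\equiv-\frac{s^2}{4}$, so $2D\delta-\delta^2\equiv-\frac{3s^2}{4}=-3\|\delta\|_2^2$. Writing $D_x:=D(x,\cdot)$, even $\|D_x\|_2^2=\frac{s^2}{2}+\frac{r^2}{3}=3\|\delta\|_2^2$ holds for every $x$. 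Yet this $W$ is not Condorcet. It is not coregular either: for $x,y$ in distinct sub-blocks of the same half, $(D^2)(x,y)=-\frac{3s^2}{8}\neq\kappa$.

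The missing idea, which your fallback only gestures at, is to apply the operator identity to every power $\delta^{r-1}$. Set $h_r:=D[\delta^r]$ and $\mu_r:=\int_0^1\delta^r$. The identity gives $Dh_{r-1}+h_r-\delta h_{r-1}=\kappa\mu_{r-1}$, and this recursion closes. If $h_{r-1}$ is already known to be a polynomial in $\delta$, then $Dh_{r-1}$ is a combination of $h_r$ and the known $h_j$, $j<r$, so you can solve for $h_r$. Then $\int_0^1 h_r=-\mu_{r+1}$ gives the next moment.

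Induction (Lemma~\ref{lm:TJr}) yields $\delta(\eta)\sim\Unif([-a,a])$ with $a^2=3\|\delta\|_2^2$. It also yields $D[q(\delta)](x)=\E[a\,\mathrm{sgn}(\delta(x)-\delta(\eta))\,q(\delta(\eta))]$ for all polynomials $q$. By density, $\E[D(x,\eta)\mid\delta(\eta)]=a\,\mathrm{sgn}(\delta(x)-\delta(\eta))$. This already has second moment $a^2=\|D_x\|_2^2$, so the conditional variance vanishes and $D(x,y)=a\,\mathrm{sgn}(\delta(x)-\delta(y))$ (Lemma~\ref{lm:q}).

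Testing against indicators $\mathbf 1\{\delta>t\}$ has no comparable closure. Nothing tells you a priori that $D[\mathbf 1\{\delta>t\}]$ is a function of $\delta$, which is precisely what the polynomial induction establishes.

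A smaller point: ``the trace kills the commutator'' needs justification, since $D$ is only Hilbert--Schmidt. The paper obtains the stronger pointwise statement $\|D_x\|_2^2=3\|\delta\|_2^2$ a.e. by averaging \eqref{eq:FxyJ} over $y$ near a Lebesgue point $x$ (Lemma~\ref{lm:J}).
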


The proof of Theorem \ref{thm:W} is given in Section \ref{sec:Wrandompf}. The result has both probabilistic and combinatorial interpretations:

\begin{itemize}

\item  Probabilistically, Theorem~\ref{thm:W} means that, in the \(\triangle\)-regular regime, the only \(W\)-random tournament models for which \(N_\triangle(T_n)\) is asymptotically Gaussian, arise from the Condorcet tournamenton \eqref{eq:randomW}, for some \(p\in(0,1)\). Note that this   includes fluctuations both in the $O(n^2)$ scale (when $p \in (0, 1)\backslash\{\frac{1}{2}\}$) as well as in the $O(n^{\frac{3}{2}})$ scale (when $p=\frac{1}{2}$). The boundary values \(p\in\{0,1\}\) are excluded, since in these cases \(N_\triangle(T_n)=0\), almost surely (recall Remark~\ref{remark:transitiveW}). 

\item Combinatorially, Theorem~\ref{thm:W} can be viewed as a quasirandom-forcing result for the Condorcet tournamenton. In other words, it shows that if \(W\) is \(\triangle\)-coregular, then \(W\) is forced to be equivalent to the tournamenton \(W_p\) in  \eqref{eq:randomW}, for some $p \in [0, 1]$. Although there is a growing body of work on quasirandom-forcing tournaments \cite{qt,localtournament,coregliano2017density,hancock2023no,kral2026sidorenko,noel2025forcing}, to the best of our knowledge, existing results primarily concern properties that force the uniform tournamenton $W_{1/2}$. Theorem~\ref{thm:W} appears to be the first result of this type that forces a non-uniform tournamenton, namely the Condorcet model. Intuitively, the theorem says that if the density of circular triads passing through every pair of continuum vertices \(x,y\in[0,1]\) is constant, then the tournamenton must be equivalent to \(W_p\). This may be viewed as a tournament analogue of the graph quasirandomness result of \cite[Theorem~4.2]{graphonK3}, which shows that if the density of triangles passing through every pair of continuum vertices in a graphon is constant, then the graphon must itself be constant (see also \cite{hladky2019limit} for consequences of this result for fluctuations of triangle counts in \(W\)-random graphs.) 
\end{itemize}

\begin{remark} 
Note that the limiting distribution in \eqref{eq:reg} is completely degenerate if and only if both the Gaussian and non-Gaussian components are degenerate. By Theorem~\ref{thm:N} (2), this occurs if and only if $W\equiv \frac12$ is the uniform tournamenton. This can also be stated as a quasirandomness-type result in the above terminology: a tournamenton $W$ with $\zeta(W)\in[0,1)$ is both $\triangle$-cosymmetric and $\triangle$-coregular if and only if $W\equiv \frac12$. The endpoint $\zeta(W)=1$ corresponds to the transitive tournamenton $W_{\mathrm{tr}}$ (recall Remark~\ref{remark:transitiveW}), which is also both $\triangle$-cosymmetric and $\triangle$-coregular. Thus, being both $\triangle$-cosymmetric and $\triangle$-coregular forces $W$ to be either uniform or transitive  (see Proposition~\ref{prop:half} for the formal statement).  
\label{remark:uniform} 
\end{remark}

\subsection{Examples} 
\label{sec:examples}

In this section we compute the limiting distribution of $N_\triangle(T_n)$ for some common choices of $W$, using Theorem \ref{thm:N}.

\begin{example}[Condorcet random model]  
Let $W=W_p$ be the Condorcet tournamenton \eqref{eq:randomW}. A direct calculation shows that 
$t^{\triangle}_{W}(x)=\frac{1}{2} p(1-p)$, for almost every $x\in[0,\,1].$ Hence, in this case, $W$ is $\triangle$-regular. Another straightforward calculation gives, 
$$t_{W}(\lp, x, y)=
\begin{cases}
(1-p)\{ (1-2p)(y-x)+p\} &\text{ if }x<y,\\
p\{(1-2p)(y-x)+(1-p)\}&\text{ if }x>y.
\end{cases} 
$$ 
Then the symmetrized 2-point conditional homomorphism density (recall \eqref{eq:Wp}) simplifies to $\triangle_{W}(x,\,y)=\frac{1}{2} p(1-p),$ almost everywhere. Hence, $\frac{1}{2} p(1-p)$ is the only nonzero eigenvalue of the operator $T_{\triangle_W}$. Thus, $\Spec^-(\triangle_W)$ is empty, and the second (non-Gaussian) term in the limiting distribution in \eqref{eq:reg} vanishes. Therefore, 
$$\frac{N_{\triangle}(T_n)-\binom{n}{3}p(1-p)}{n^2} \dto N\left(0,\, \tau_W^2 \right) , $$
where 
$$\tau_W^2=\frac{p(1-p)}{2}\int_{[0,\,1]^2}(t_{W}(\lp, x, y)-t_{W}(\lp, y, x))^2\mathrm{d}x\mathrm{d}y=\frac{p(1-p)(1-2p)^2}{12} > 0 , $$ 
whenever $p \in (0, 1) \backslash \{ \frac{1}{2} \}$. 
This recovers the classical result of \citet[Section 10.4]{janson1991asymptotic}. Note that for $p= \frac{1}{2}$, the limiting distribution is given by \eqref{eq:Wuniform}. Also, when $p = 1$ (and similarly for $p=0$), which corresponds to the transitive tournamenton $W_{\mathrm{tr}}(x, y) = \bm 1\{ x < y \}$, one has $t(\cc, W) = 0$. Hence, in this case, $\E[N_\triangle(T_n)] = 0$ (by \eqref{eq:EN}), which means that $N_\triangle(T_n) = 0$ almost surely (recall Remark \ref{remark:transitiveW}).   
\label{example:randomW}
\end{example}

Next, we consider the tournamenton in \eqref{eq:cc}, which is the empirical tournamenton associated with the directed cycle $\cc$.

\begin{example}[Rock--paper-scissors block model]  Suppose $W=W^{\cc}$ be the tournamenton in \eqref{eq:cc}. Clearly, $W$ is degree-regular, hence Corollary \ref{cor:regularN} applies. In particular, one can verify that $t^{\triangle}_{W}(x)=\frac{1}{8}$, for almost every $x\in[0,\,1]$. Furthermore, for $x, y \in[0, 1]$, 
$$t_{W}(\lp, x, y)=b_{ \lceil 3 x \rceil  \lceil 3 y \rceil }  ,  \quad \text{ where } \quad 
B = ((b_{uv}))_{1 \leq u, v \leq 3} = 
\begin{pmatrix}
\frac{1}{12}&\frac{1}{3}&\frac{1}{3}\\
\frac{1}{3}&\frac{1}{12}&\frac{1}{3}\\
\frac{1}{3}&\frac{1}{3}&\frac{1}{12}
\end{pmatrix}.$$ 
This implies, since $t_{W}(\lp, x, y)$ is symmetric, $\triangle_{W}(x,\,y)= \frac{1}{2} t_{W}(\lp, x, y)$. Further, one has $\Spec(\triangle_{W})=\{\frac{1}{8},\,-\frac{1}{24},\,-\frac{1}{24}\}$, and hence,  $\Spec^-(\triangle_{W})=\{-\frac{1}{24},\,-\frac{1}{24}\}$. Hence, by Corollary \ref{cor:regularN}, 
$$\frac{ N_{\triangle}(T_n)- \frac{1}{4} \binom{n}{3} }{n^2}\dto-\frac{1}{24}(Z_1^2-1)-\frac{1}{24}(Z_2^2-1) \stackrel{D}= \frac{1}{12}-\frac{1}{24} \chi^2_{(2)} , $$
where $Z_1,\,Z_2$ are independent $N(0, 1)$ random variables and $\chi^2_{(2)}$ denotes the chi-squared distribution with 2 degrees of freedom. 
\label{example:blockcycle}
\end{example}

Next, we consider a tournamenton with a circular-threshold structure, also known as the {\it carousel} tournamenton, which appears often in extremal combinatorics problems \cite{grzesik2023cycles}.

\begin{figure}[h]
%\vspace{-0.25in}
    \begin{minipage}{0.48\linewidth}
        \centering
\begin{tikzpicture}[scale=3.75]

    % black regions where W_{0.2}(x,y)=1
    \fill[black] (0,0) -- (1,1) -- (0.8,1) -- (0,0.2) -- cycle;
    \fill[black] (0.2,0) -- (1,0) -- (1,0.8) -- cycle;

    % unit square
    \draw[thick] (0,0) rectangle (1,1);

    % boundary lines between 0 and 1 regions
    \draw[thick] (0,0) -- (1,1);
    \draw[thick] (0,0.2) -- (0.8,1);
    \draw[thick] (0.2,0) -- (1,0.8);

    % labels for square vertices
    \node[below left]  at (0,0) {\scriptsize $(0,0)$};
    \node[below right] at (1,0) {\scriptsize $(1,0)$};
    \node[above left]  at (0,1) {\scriptsize $(0,1)$};
    \node[above right] at (1,1) {\scriptsize $(1,1)$};

    % labels for transition points
    \node[left]  at (0,0.2) {\scriptsize $(0,0.2)$};
    \node[below] at (0.2,0) {\scriptsize $(0.2,0)$};
    \node[above] at (0.8,1) {\scriptsize $(0.8,1)$};
    \node[right] at (1,0.8) {\scriptsize $(1,0.8)$};

\end{tikzpicture} \\ 
        \small{(a)}
    \end{minipage} 
        \begin{minipage}{0.48\linewidth}
        \centering
\begin{tikzpicture}[scale=3.75]

    % black regions where W_{0.5}(x,y)=1
    \fill[black] (0,0) -- (0,0.5) -- (0.5,1) -- (1,1) -- cycle;
    \fill[black] (0.5,0) -- (1,0) -- (1,0.5) -- cycle;

    % unit square
    \draw[thick] (0,0) rectangle (1,1);

    % boundary lines between 0 and 1 regions
    \draw[thick] (0,0) -- (1,1);
    \draw[thick] (0,0.5) -- (0.5,1);
    \draw[thick] (0.5,0) -- (1,0.5);

    % labels for square vertices
    \node[below left]  at (0,0) {\scriptsize $(0,0)$};
    \node[below right] at (1,0) {\scriptsize $(1,0)$};
    \node[above left]  at (0,1) {\scriptsize $(0,1)$};
    \node[above right] at (1,1) {\scriptsize $(1,1)$};

    % labels for transition points
    \node[left]  at (0,0.5) {\scriptsize $(0,\tfrac12)$};
    \node[below] at (0.5,0) {\scriptsize $(\tfrac12,0)$};
    \node[above] at (0.5,1) {\scriptsize $(\tfrac12,1)$};
    \node[right] at (1,0.5) {\scriptsize $(1,\tfrac12)$};

\end{tikzpicture} \\ 
        \small{(b)}
    \end{minipage}

    \caption{\small{The carousel tournamenton \eqref{eq:circularW} with (a) $\alpha=0.2$ and (b) $\alpha =0.5$. The function takes the value $1$ on the black regions and $0$ on the white regions. }}
    \label{fig:circular}
\end{figure}

\begin{example}[Carousel tournamenton]
Fix $0< \alpha <1$ and consider the tournamenton:  For $x, y \in [0, 1]$, 
\begin{align} 
W_{(\alpha)}(x,\,y)=\begin{cases}
    1&\text{if }x-y\in[-\alpha,\,0)\cup[\alpha,\,1],\\
    0&\text{if }x-y\in[-1,\,-\alpha)\cup[0,\,\alpha) . 
\end{cases}
\label{eq:circularW}
\end{align} 
Figure \ref{fig:circular} shows the function $W_{(\alpha)}$, for $\alpha=0.2$ and $\alpha=0.5$. 
By directly computing the function $t_{W_{(\alpha)}}^\triangle$ it can be shown that $W_{(\alpha)}$ is $\triangle$-regular if and only if $\alpha=\frac{1}{2}$.\footnote{In a random tournament sampled from $W_{(\alpha)}$ with $\alpha=0.5$, each player beats half of the other players in a circular structure, resembling horses arranged on a merry-go-round carousel. }

\begin{itemize}

\item $\alpha \ne \frac{1}{2}$: In this case, $W_{(\alpha)}$ is not $\triangle$-regular. Hence, applying  Theorem \ref{thm:N} gives 
\begin{equation*}
    \frac{N_{\triangle}(T_n)- \E[N_{\triangle}(T_n)]  }{n^{\frac{5}{2}}} \dto N(0,\,\sigma^2_{W_{(\alpha)}}) , 
    %\label{eq:irreg}
\end{equation*}
for some $\sigma^2_{W_{(\alpha)}} > 0$.

\item $\alpha = \frac{1}{2}$: Then $W_{(1/2)}$ is $\triangle$-regular, in fact, it is degree-regular (see Figure \ref{fig:circular} (b)). Hence, for $x, y \in [0, 1]$, 
\begin{align}
\triangle_{W_{(1/2)}}(x,\,y) =\frac{t_W(\lp, x, y)}{2} & = \frac{1}{2} \int_0^1W(y,\,z)W(z,\,x)\mathrm{d}z \nonumber \\ 
& =\frac{1}{2}\min\{|x-y|,\,1-|x-y|\} .  \nonumber 
\end{align} 
The operator associated with $\triangle_{W_{(1/2)}}(x,\,y)$ has (countably) infinitely many eigenvalues $\lambda_0 \{\lambda_{r}, \lambda_r' \}_{r \geq 0}$, with 
$$\lambda_0=\frac18,\qquad
\lambda_r=
\begin{cases}
-\dfrac{1}{2\pi^2 r^2} & \text{ if }  r \text{ is odd},\\
0 & \text{ if } r \text{ is even } \text{ and } r \neq 0 , 
\end{cases}
$$
and $\lambda_{r} = \lambda_r'$. Hence, by Corollary \ref{cor:regularN}, 
\begin{equation*}
    \frac{N_{\triangle}(T_n)-\E[N_{\triangle}(T_n)] }{n^{2}}\dto \sum_{s=0}^\infty \lambda_{2s+1} (Y_s- 2), 
\end{equation*}
where $\{Y_r\}_{r \geq 0}$ are i.i.d. $\chi^2_{(2)}$ random variables. 
\end{itemize}
\label{example:circular} 
\end{example}

\section{Tournamenton Multiplier Bootstrap}
\label{sec:estimateN}

Note that the asymptotic distribution of $N_\triangle(T_n)$ derived in Theorem~\ref{thm:N} depends on the tournamenton $W$. Hence, in order to use this result for statistical inference for $\zeta(W)$ (recall \eqref{eq:transitivityW}), we need to estimate the quantiles of the asymptotic distribution. When $W$ is not $\triangle$-regular, this can be done by consistently estimating $\sigma_W^2$ in \eqref{eq:irvariance}, since the limiting distribution of $N_\triangle(T_n)$ in this case is Gaussian (recall \eqref{eq:irreg}). However, when $W$ is $\triangle$-regular, this becomes more complicated, since the limiting distribution is non-Gaussian (recall \eqref{eq:reg}). In this section, we introduce the \textit{tournamenton multiplier bootstrap}, a method for estimating the quantiles of the limiting distribution in \eqref{eq:reg} based on the observed tournament $T_n$ itself and additional external randomness. For this, we need to define the empirical counterpart of the  2-point conditional homomorphism density defined in \eqref{eq:W_tr}.

\begin{defn}[Empirical 2-point conditional homomorphism density] 
Suppose $T_n$ is a tournament with vertex set $[n]:=\{1, 2, \ldots, n\}$ and adjacency matrix $A_{T_n} = ((a_{uv}))_{1 \leq u, v \leq n}$. Fix $1 \leq u, v \leq n$. Then empirical 2-point conditional homomorphism density for $\cc$ is defined as: 
\begin{align*}
    \hat{t}_{T_n}(\cc, u, v) & = \frac{a_{uv}}{n} \sum_{w=1}^n a_{vw} a_{wu} .
\end{align*}  
Similarly, the empirical 2-point conditional homomorphism density for $\cac$ is defined as
\begin{align*}
   \hat{t}_{T_n}(\cac, u, v) & = \frac{a_{vu}}{n} \sum_{w=1}^n a_{uw} a_{wv} = \hat{t}_{T_n}(\cc, v, u) .
\end{align*}  
Finally, define {\it symmetrized empirical $2$-point conditional homomorphism density} as: 
\begin{align*}
    \hat \triangle_{T_n}(u, v) :=\frac{ a_{uv} \sum_{w=1}^n a_{vw} a_{wu} + a_{vu} \sum_{w=1}^n a_{uw} a_{wv} }{2 n } . 
    %\label{eq:matrixtriangle}
\end{align*}  
\end{defn}

With the above definition we can now describe the multiplier bootstrap estimate of the limiting distribution in \eqref{eq:reg}. To this end, let $\hat \triangle_{T_n} = ((\hat \triangle_{T_n}(u, v)))_{1 \leq u, v \leq n}$ be the $n\times n$ symmetric matrix containing the symmetrized empirical $2$-point conditional homomorphism density for all pairs of vertices. Suppose $\hat\lambda_1\ge\hat\lambda_2\ge\ldots\ge\hat\lambda_n$ are the eigenvalues of $\frac{1}{n} \hat \triangle_{T_n}$ arranged in non-increasing order. Now, define  
\begin{equation}
    \hat X_\triangle(T_n) : =\sum_{s=2}^n \hat{\lambda}_s(J_s^2-1) , 
    \label{eq:XTn}
\end{equation}
where $\{J_s\}_{1 \leq s \leq n}$ are i.i.d. $N(0,\,1).$ Note that $\hat X_\triangle(T_n)$ depends only on the observed tournament $T_n$ and the Gaussian multipliers
$J_1, J_2, \ldots, J_n$ but not on the tournamenton $W$. In the next theorem, we show that $\hat X_\triangle(T_n)$, conditional on the observed tournament $T_n$, converges to the limiting distribution in \eqref{eq:reg}. 

\begin{thm} 
Suppose $W$ is a $\triangle$-regular tournamenton, with  $\zeta(W) \in [0, 1)$, and $T_n \sim T(n, W)$. 
Then, almost surely, as $n \rightarrow \infty$, 
\begin{align}\label{eq:TnJ}
\hat X_\triangle(T_n)|T_n \dto \tau_W \cdot Z+\sum_{\lambda\in\Spec^-(\triangle_{W})}\lambda(Z_\lambda^2-1), 
\end{align} 
where $Z, \{Z_\lambda\}_{\lambda\in\Spec^-(\triangle_{W})}$ are i.i.d. $N(0,\,1)$ and $\tau_W^2$ is as defined in \eqref{eq:tau_W}. 
\label{thm:TnJ}
\end{thm}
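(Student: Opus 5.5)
The idea is to view $\hat X_\triangle(T_n)$, conditional on $T_n$, as a Gaussian chaos whose law depends only on the spectrum $\{\hat\lambda_s\}_{s\ge2}$. Its conditional characteristic function is
\[
\E\bigl[e^{\mathrm{i}\theta \hat X_\triangle(T_n)}\mid T_n\bigr]=\prod_{s=2}^n (1-2\mathrm{i}\theta\hat\lambda_s)^{-1/2}e^{-\mathrm{i}\theta\hat\lambda_s}.
\]
It therefore suffices to prove, almost surely, three spectral facts:
(a) for each fixed $K$, the $K$ largest-in-modulus eigenvalues among $\hat\lambda_2,\ldots,\hat\lambda_n$ converge to the $K$ largest-in-modulus elements of $\Spec^-(\triangle_W)$ (padded with zeros);
(b) $\hat\lambda_1\to t(\cc,W)$;
(c) $\sum_{s=1}^n\hat\lambda_s^2\to\sum_{\lambda\in\Spec(\triangle_W)}\lambda^2+\tfrac12\tau_W^2$.

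Given (a)–(c), take the product over the top-$K$ eigenvalues and over the rest separately. The first group converges to the characteristic function of $\sum_{\text{top }K}\lambda(Z_\lambda^2-1)$. For the second group, the remaining eigenvalues have maximum modulus at most about $|\lambda_{(K)}|+o(1)$. A second-order expansion $\log\bigl((1-2\mathrm{i}\theta\lambda)^{-1/2}e^{-\mathrm{i}\theta\lambda}\bigr)=-\theta^2\lambda^2+O(|\lambda|^3)$ then yields $\exp\bigl(-\theta^2(\tfrac12\tau_W^2+\sum_{\lambda\text{ beyond top }K}\lambda^2)+\epsilon_K\bigr)$. Letting $K\to\infty$ gives $\exp(-\theta^2\tau_W^2/2)$ times the characteristic function of $\sum_{\Spec^-}\lambda(Z_\lambda^2-1)$, which is the right side of \eqref{eq:TnJ}.

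To obtain (a)–(c), I decompose the matrix using $a_{vu}=1-a_{uv}$ and write $\tfrac1n\hat\triangle_{T_n}=M_n+N_n+R_n$, where:
\begin{itemize}
\item $M_n(u,v)=\frac1n\triangle_W(\eta_u,\eta_v)$;
\item $N_n(u,v)=\frac{1}{2n}\bigl(a_{uv}-W(\eta_u,\eta_v)\bigr)\bigl(t_W(\lp,\eta_u,\eta_v)-t_W(\lp,\eta_v,\eta_u)\bigr)$ for $u\neq v$;
\item $R_n$ collects the diagonal terms and the error from replacing $\frac1n\sum_w a_{vw}a_{wu}$ by $t_W(\lp,\eta_u,\eta_v)$.
\end{itemize}
I would handle each piece as follows.

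First, the eigenvalues of $M_n$ converge almost surely to $\Spec(\triangle_W)$ in the $\ell^2$ (rearrangement) sense, together with $\|M_n\|_{HS}^2\to\|\triangle_W\|_{L^2}^2$, by the Koltchinskii--Giné theorem for kernel matrices of i.i.d. samples. Under $\triangle$-regularity, the top eigenvalue of $\triangle_W$ is $t(\cc,W)$ with eigenfunction $\bm 1$, and $\triangle_W\ge0$; Perron--Frobenius then identifies $\hat\lambda_1$ with this eigenvalue.

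Second, conditional on $\eta$, $N_n$ is a symmetric matrix with independent, centered, bounded entries of size $O(1/n)$. By standard bounds for Wigner-type matrices with a Borel--Cantelli upgrade, $\|N_n\|_{\mathrm{op}}=O(n^{-1/2}\log n)$ almost surely. By the strong law for $U$-statistics,
\[
\|N_n\|_{HS}^2\to\tfrac14\int W(1-W)\bigl(t_W(\lp,x,y)-t_W(\lp,y,x)\bigr)^2=\tfrac12\tau_W^2,
\]
using \eqref{eq:tau_W} and $W(x,y)W(y,x)=W(1-W)$. In addition, the cross term $\tr(M_nN_n)$ is a centered sum with variance $O(n^{-2})$, so it tends to $0$ almost surely.

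Third, $\|R_n\|_{HS}\to0$ almost surely. This follows by Hoeffding's inequality: for distinct $w$, the conditional expectation of $a_{vw}a_{wu}$ given $\eta_u,\eta_v$ is $t_W(\lp,\eta_u,\eta_v)$. A union bound over the $n^2$ pairs gives a uniform error of $O(\sqrt{\log n/n})$.

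Combining these, Weyl's and Hoffman--Wielandt's inequalities transfer (a) and (b) from $M_n$, since $N_n+R_n$ has vanishing operator norm. Expanding $\|M_n+N_n+R_n\|_{HS}^2$ gives (c).

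The main obstacle is the second-to-last step, namely the intermediate spectral region. It is not enough that the top eigenvalues match: the many eigenvalues of order $n^{-1/2}$ produced by the noise $N_n$ must together contribute a Gaussian with variance exactly $\tau_W^2$, while the eigenvalues of $M_n$ must not leak into this region. I would handle this through the truncation at level $K$ described above. The key is that (c) pins down the total squared mass, and the operator-norm bound on $N_n$ keeps the noise eigenvalues uniformly small.
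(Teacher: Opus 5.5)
Your proposal is correct, but it takes a different route from the paper.

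\textbf{The paper's route.} The paper never tracks individual eigenvalues other than $\hat\lambda_1$. For $|t|<\frac14$ it writes the conditional log-MGF as the power series $t^2\sum_{s\ge2}\hat\lambda_s^2+\frac12\sum_{r\ge3}\frac{2^r}{r}\bigl(\sum_{s\ge2}\hat\lambda_s^r\bigr)t^r$. It then matches this term by term with the cumulant series of the limit (Lemma \ref{lem:limit_MGF}).
\begin{itemize}
\item For $r\ge3$ it uses $\sum_s\hat\lambda_s^r=t(C_r,\bar\triangle_{T_n})$. It combines this with cut-metric convergence $\bar\triangle_{T_n}\to\triangle_W$, which comes from a tournamenton sampling lemma and cut-norm continuity of $U\mapsto\triangle_U$ (Lemma \ref{lm:Wtriangleconvergence}). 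The same convergence also gives $\hat\lambda_1\to t(\cc,W)$.
\item The density $t(C_2,\cdot)$ is not cut-continuous, so the $r=2$ term is computed directly. That computation is where the extra $\frac12\tau_W^2$ appears, exactly as in your (c).
\end{itemize}

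\textbf{Your route.} You work at the level of the matrix, using the split $M_n+N_n+R_n$. Koltchinskii--Gin\'e together with Weyl (since $\|N_n+R_n\|_{\mathrm{op}}\to0$) gives convergence of the large eigenvalues. Only Weyl is needed here; Hoffman--Wielandt would not apply to $N_n$, whose Frobenius norm does not vanish. The Hilbert--Schmidt bookkeeping then shows that the leftover mass $\frac12\tau_W^2$ sits in eigenvalues of vanishing size. Your truncated characteristic-function argument turns that mass into the Gaussian factor.

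\textbf{Comparison.}
\begin{itemize}
\item Your approach makes the Gaussian component transparent: it is the Frobenius mass of the edge-noise matrix $N_n$, whose operator norm vanishes.
\item Characteristic functions remove any restriction on the argument. Your operator-norm bound also recovers the paper's key fact directly, via $|\sum_s\mu_s^r|\le\|\cdot\|_{\mathrm{op}}^{r-2}\|\cdot\|_{HS}^2$, so the noise affects only the $r=2$ power sum.
\item The cost is more external machinery: a kernel-matrix spectral theorem, an almost-sure Wigner-type operator-norm bound, and Hoeffding with a union bound.
\item You also need care with ties in modulus and with the multiplicity of $t(\cc,W)$ when matching ``top-$K$'' eigenvalues. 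This is cleanest if you threshold at levels $\epsilon\notin\{|\lambda|:\lambda\in\Spec(\triangle_W)\}$ rather than index by $K$.
\item The paper's power-sum approach avoids eigenvalue matching entirely, at the price of relying on graph-limit continuity results.
\end{itemize}
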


The proof of Theorem \ref{thm:TnJ} is given in Section \ref{sec:TnJpf}. It shows that the asymptotic distribution of $\hat X_\triangle(T_n)|T_n$ is the same as that in \eqref{eq:reg}. Hence, we can use the distribution of $\hat X_\triangle(T_n)|T_n$, which depends only on the observed tournament $T_n$, to approximate the quantiles of the limiting distribution $\bm{Z}(\mathcal{H},W)$. This allows us to construct a confidence interval for $\zeta(W)$ (recall \eqref{eq:transitivityW}), which we describe in the next section.

\section{Confidence Interval for the Consistency Coefficient } 
\label{sec:cW}

Note that although Theorem~\ref{thm:TnJ} provides a way to estimate the quantiles of the limiting distribution in \eqref{eq:reg}, it cannot be applied directly to construct a confidence interval for the consistency coefficient $\zeta(W)$ (recall \eqref{eq:transitivityW}), because it is not known a priori which of the three regimes in Theorem~\ref{thm:N} applies, since $W$ is unknown. To address this, we propose a \textit{testimation} strategy that first involves carrying out two intermediate tests for (1) whether or not $W$ is $\triangle$-regular, and (2) whether $W$ is uniform, that is, whether $W\equiv \frac{1}{2}$. Then based on the outcome of these tests we can construct the confidence interval by estimating the quantiles of the appropriate asymptotic distribution, either by plug-in estimation of the asymptotic variance (for \eqref{eq:irreg}) or by the multiplier bootstrap (for \eqref{eq:reg}). The rest of this section is organized as follows. In Section~\ref{sec:r}, we discuss the test for regularity. Then, in Section~\ref{sec:uniform}, we present the test for uniformity. In Section~\ref{sec:cWmethod}, we combine these tests with the multiplier bootstrap method described in Section \ref{sec:estimateN} to provide an algorithm for constructing the confidence interval. Simulations illustrating the empirical performance of the proposed algorithm are reported in Section~\ref{sec:simulation}.

\subsection{Testing for Regularity}
\label{sec:r}

Given a tournamenton $W$ and $T_n \sim T(n, W)$ the regularity testing problem can be formulated as: 
\begin{align}\label{eq:secondtest}
	H_{0} :W \text{ is } \triangle \text{-regular} \quad \text{ versus } \quad H_{1}: W \text{ is not } \triangle \text{-regular} . 
\end{align}
We know from Theorem \ref{thm:N} (1) that $W$ is $\triangle$-regular if and only if the asymptotic variance $\sigma_W^2 = 0$ (recall \eqref{eq:irvariance}).  Observe that  $\sigma_W^2$  can be consistently estimated from $T_n$ based on the following simple estimate: 
\begin{align}\label{eq:varianceestimate}
	\hat \sigma^2_{T_n} = t(\cccc \,,\, W^{T_n}) - t(\cc, \, W^{T_n})^2 , 
\end{align} 
where, for any fixed directed graph $H = (V(H), E(H))$,  $t(H, W^{T_n})$ is the homomorphism density of $H$ in the empirical tournamenton $W^{T_n}$ (recall \eqref{eq:Tn}). This can be expressed as  (recall \eqref{eq:tHW}):  
\begin{align*}%\label{eq:tHWTn}
    t(H, W^{T_n})=\frac{1}{n^{|V(H)|}}\sum_{\bm s\in [n]^{|V(H)|}} \prod_{(u, v)\in E(H)} a_{s_u s_v} , 
\end{align*} 
where $A_{T_n} = ((a_{st}))_{1 \leq s,t \leq n}$ is the adjacency matrix of the tournament $T_n$. 
%The following result shows that $\hat \sigma^2_{T_n}$ converges to zero at rate faster than $1/\sqrt n$, when $W$ is $\triangle$-regular. 

\begin{proposition}\label{prop:H01r} Suppose $\hat \sigma^2_{T_n}$ be as defined in 
\eqref{eq:varianceestimate}. Then the following hold: 
\begin{itemize}
 
\item[$(1)$] When $W$ is $\triangle$-regular, $n \hat \sigma^2_{T_n} = O_P(1)$. 

\item[$(2)$] When $W$ is not $\triangle$-regular, $\hat \sigma^2_{T_n} \overset{P} \rightarrow \sigma_W^2 > 0$. 

\end{itemize}
\end{proposition}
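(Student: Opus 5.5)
We propose to write $\hat\sigma^2_{T_n}$ as an empirical variance of vertex-level triangle counts and compare it with the latent analogue. For each vertex $u$ set
\[
\hat t_u := \frac{1}{n^2}\sum_{v,w\in[n]} a_{uv}a_{vw}a_{wu}.
\]
Then $t(\cc, W^{T_n}) = \frac1n\sum_u \hat t_u$ and $t(\cccc\,,W^{T_n}) = \frac1n\sum_u \hat t_u^2$, because the bowtie factorises at the shared vertex exactly as in \eqref{eq:ccjoin}. Hence
\[
\hat\sigma^2_{T_n} = \frac1n\sum_{u=1}^n\Bigl(\hat t_u - \bar t\Bigr)^2, \qquad \bar t := \frac1n\sum_u \hat t_u .
\]
Since the empirical variance minimises squared deviation from a constant, this gives, for any random vector $(c_u)$,
\[
\hat\sigma^2_{T_n}\le \frac2n\sum_u(\hat t_u-c_u)^2+\frac2n\sum_u (c_u-\bar c)^2 .
\]
We take $c_u = t_W^\triangle(\eta_u)$ and compare $\hat t_u$ to it.

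\textbf{Key estimate.} The main step is $\E(\hat t_u - t^\triangle_W(\eta_u))^2 = O(1/n)$, uniformly in $u$. Conditional on $\eta_u$, write
\[
\hat t_u = \frac{1}{n^2}\sum_{v,w\ne u,\, v\ne w} a_{uv}a_{vw}a_{wu} + O(1/n).
\]
The $O(1/n)$ error comes from diagonal terms, using $a_{uu}=\tfrac12$ and $|a|\le1$. The main sum is a generalized $U$-statistic in the variables $\{\eta_v\}_{v\neq u}$ and the edge variables. Its conditional mean is $t^\triangle_W(\eta_u)(1+O(1/n))$. For its conditional variance, we expand it as a double sum over pairs $(v,w)$ and $(v',w')$. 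Two summands are conditionally independent given $\eta_u$ unless they share a vertex among $v,w$ (edges $uv$ and $uv'$ are distinct when $v\neq v'$). There are only $O(n^3)$ dependent pairs among $n^4$, so the variance is $O(n^3/n^4)=O(1/n)$. I expect this counting to be the only real obstacle: one must check carefully that the shared vertex $u$ induces no dependence beyond $\eta_u$, which is true because every edge variable involving $u$ is distinct.

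\textbf{Part (1).} Under $\triangle$-regularity, $t^\triangle_W(\eta_u)=t(\cc,W)$ almost surely, so $c_u-\bar c=0$ and
\[
\E\,\hat\sigma^2_{T_n}\le \frac2n\sum_u\E(\hat t_u-c_u)^2 = O(1/n).
\]
Markov's inequality then gives $n\hat\sigma^2_{T_n}=O_P(1)$.

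\textbf{Part (2).} We use the triangle inequality in $\ell^2$ with normalisation $1/n$. Writing $\|x\|:=(\frac1n\sum_u x_u^2)^{1/2}$ and letting $P$ denote centering, so that $\hat\sigma_{T_n} = \|P\hat t\|$, we have
\[
\bigl|\,\|P\hat t\| - \|Pc\|\,\bigr| \le \|\hat t - c\| = O_P(n^{-1/2}).
\]
By the strong law of large numbers applied to the i.i.d. variables $t^\triangle_W(\eta_u)$, which are bounded by $1$,
\[
\|Pc\|^2 = \frac1n\sum_u c_u^2-\bar c^{\,2} \to \int_0^1 (t^\triangle_W)^2-\Bigl(\int_0^1 t^\triangle_W\Bigr)^2=\sigma_W^2 .
\]
Hence $\hat\sigma^2_{T_n}\overset{P}\to\sigma^2_W$. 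Positivity of $\sigma^2_W$ when $W$ is not $\triangle$-regular is exactly the non-degeneracy statement of Theorem \ref{thm:N}(1): $\sigma_W^2$ is the variance of the non-constant function $t^\triangle_W(\eta_1)$.
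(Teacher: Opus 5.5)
Your proof is correct, but it takes a different route from the paper's.

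\textbf{The paper's route.} In part (1) the paper uses only first moments. It applies the sampling-lemma bias bound $|\E\, t(F,W^{T_n}) - t(F,W)| = O(1/n)$, which comes from non-injective vertex maps, to $F=\cccc$ and to $F=\cc\bigsqcup\cc$. Combined with $t(\cc\bigsqcup\cc,\cdot)=t(\cc,\cdot)^2$, this gives $\E\,\hat\sigma^2_{T_n}=\sigma^2_W+O(1/n)$ for every $W$. Nonnegativity of $\hat\sigma^2_{T_n}$ (via Cauchy--Schwarz) and Markov's inequality then finish. Part (2) is delegated to the standard concentration of homomorphism densities of sampled tournaments.

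\textbf{Your route.} You identify $\hat\sigma^2_{T_n}$ as the empirical variance of the vertex-level densities $\hat t_u$. You then couple each $\hat t_u$ to $t^\triangle_W(\eta_u)$ in $L^2$ through a conditional second-moment count. The dependency count is right: the edges $uv$ and $uv'$ are distinct, so summands with disjoint $\{v,w\}$ and $\{v',w'\}$ are conditionally independent given $\eta_u$.

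\textbf{Comparison.} Your approach costs a variance computation that the paper's part (1) avoids entirely. In return, it gives the following:
\begin{enumerate}
\item[(i)] a single estimate drives both parts;
\item[(ii)] part (2) becomes self-contained, using the strong law for the bounded i.i.d.\ variables $t^\triangle_W(\eta_u)$ plus the fact that centring is a contraction, rather than an external concentration result;
\item[(iii)] nonnegativity of $\hat\sigma^2_{T_n}$ is automatic;
\item[(iv)] you get the quantitative comparison $\bigl|\hat\sigma_{T_n}-s_n\bigr|=O_P(n^{-1/2})$, where $s_n$ is the empirical standard deviation of $t^\triangle_W(\eta_1),\ldots,t^\triangle_W(\eta_n)$.
\end{enumerate}
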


The proof of Proposition \ref{prop:H01r} is given in Section \ref{sec:H01rpf}. Now, consider the test function for the hypotheses \eqref{eq:secondtest}, 
\begin{align}\label{eq:testHW}
	\phi_n^{\mathrm{reg}} := \bm 1 \left\{ \hat \sigma^2_{T_n}> \frac{1}{\sqrt n} \right\} . 
\end{align} 
Proposition \ref{prop:H01r} implies that under $H_{0}$ as in \eqref{eq:secondtest}, $\mathbb{P}_{H_{0}}( \phi_n^{\mathrm{reg}} )\rightarrow 0$, and under $H_{1}$, $\mathbb{P}_{H_{1}}( \phi_n^{\mathrm{reg}} )\rightarrow 1$. Hence, the test \eqref{eq:testHW} is consistent for the regularity testing problem \eqref{eq:secondtest}.

\subsection{Testing for Uniformity}
\label{sec:uniform}

Given a tournamenton $W$ and $T_n \sim T(n, W)$ the uniformity testing problem can be formulated as follows: 
\begin{equation}
   H_0: W \equiv \tfrac{1}{2} \quad\text{ versus }\quad H_1: W \ne \tfrac{1}{2} .
    \label{eq:H01uniform}
\end{equation}
More precisely, we want to test the null hypothesis that $W(x, y) = \frac{1}{2}$ for almost every $x, y \in [0, 1]$ versus the alternative that $W(x, y) \ne \frac{1}{2}$ on a set of positive measure. Although uniformity of a tournamenton is a ``global'' property, it can still be tested using ``local'' features, such as the densities of fixed-size tournaments. For this, we invoke the notion of quasirandom-forcing tournaments:

\begin{defn}(\cite[Proposition 1]{hancock2023no}) 
A tournament $H = (V(H), E(H))$ is said to be \textit{quasirandom-forcing} if every tournamenton $W$ satisfying $$t(H, W) = \frac{1}{2^{|V(H)| \choose 2}}$$ is equal to $\frac{1}{2}$ almost everywhere. 
\label{defn:TW} 
\end{defn}

\begin{figure}
    \centering
    \begin{tikzpicture}[
            > = stealth, % arrow head style
            shorten > = 1pt, % don't touch arrow head to node
            auto,
            node distance = 10cm, % distance between nodes
            semithick % line style
        ]
        \tikzstyle{every state}=[
            draw = black,
            thick,
            fill = white,
            minimum size = 6mm
        ]
        \node[state] (1) at (0,2) {$1$};
        \node[state] (2) at (2,2) {$2$};
        \node[state] (3) at (2,0) {$3$};
        \node[state] (4) at (0,0) {$4$}; 
        \path[->] (1) edge node {} (2);
        \path[->] (1) edge node {} (3);
        \path[->] (1) edge node {} (4);
        \path[->] (2) edge node {} (3);
        \path[->] (2) edge node {} (4);
        \path[->] (3) edge node {} (4);
    \end{tikzpicture}
    \caption{\small{ $\Tr_4$: A transitive tournament on $4$ vertices which is quasirandom-forcing. }}
    \label{fig:2}
\end{figure}

It is known from results of \citet{coregliano2017density} that every transitive tournament with at least $r \ge4$ vertices is quasirandom-forcing. The smallest such tournament is a transitive tournament on $4$ vertices (see Figure \ref{fig:2}), which we will denote by $\Tr_4$. Let $t(\Tr_4,W)$ be the homomorphism density of $\Tr_4$ in $W$ (recall \eqref{eq:tHW}). A natural empirical estimate of $t(\Tr_4,W)$, based on the observed tournament $T_n$, is given by,  
\begin{align}\label{eq:trTn}
\hat t_{\Tr_4}(T_n) = \frac{1}{(n)_4} \sum_{1 \leq s_1 \ne s_2 \ne s_3 \neq s_4 \leq n} a_{s_1s_2} a_{s_1s_3} a_{s_1s_4} a_{s_2s_3} a_{s_2 s_4} a_{s_3s_4}, 
\end{align}
where $(n)_4 = n(n-1)(n-2)(n-3)$ and $A_{T_ n} = ((a_{st}))_{1 \leq s, t \leq n}$ is the adjacency matrix of $T_n$. Observe that $\hat t_{\Tr_4}(T_n)$ is the normalized count of the number of copies of $\Tr_4$ in the $W$-random tournament $T_n.$  Moreover, 
$\E [\hat t_{\Tr_4}(T_n)]= t(\Tr_4,\,W)$, where 
%$$t(\Tr_4,W)=\int_{[0,1]^4} W(x_1, x_2)W(x_1,x_3)W(x_1,x_4)W(x_2,x_3)W(x_2,x_4)W(x_3,x_4)\mathrm{d}x_1\mathrm{d}x_2\mathrm{d}x_3 \mathrm{d}x_4.$$ 
\begin{align*}%\label{eq:W4} 
t(\Tr_4,W)=\int_{[0,1]^4} \prod_{1 \leq a < b \leq 4} W(x_a, x_b) \prod_{a=1}^4 \mathrm{d} x_a .   
\end{align*}
The quasirandom-forcing property of $\Tr_4$ implies that $t(\Tr_4,\,W)=\frac{1}{64}$ if and only if $W=\frac{1}{2}$ almost everywhere on $[0,\,1]^2.$ 
%The next result establishes the concentration of $\hat t_{\Tr_4}(T_n) $, which is an unbiased estimate of $t(\Tr_4,\,W)$.  

\begin{proposition}\label{prop:Huniform}
Let $\hat t_{\Tr_4}(T_n)$ be as defined in \eqref{eq:trTn}. Then the following hold: 
\begin{itemize} 

\item[$(1)$]  For $H_0$ as in \eqref{eq:H01uniform}, $n ( \hat t_{\Tr_4}(T_n) - \frac{1}{64} ) = O_P(1)$. 
 
\item[$(2)$] For $H_1$ as in \eqref{eq:H01uniform}, $ \hat t_{\Tr_4}(T_n) \overset{P} \rightarrow t(\tr_4, W) \ne \frac{1}{64}$. 

\end{itemize}
\end{proposition}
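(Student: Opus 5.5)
Both parts reduce to a variance computation for the generalized $U$-statistic $\hat t_{\Tr_4}(T_n)$. We already know $\E[\hat t_{\Tr_4}(T_n)] = t(\Tr_4,W)$. So part (2) follows from Chebyshev's inequality once we show $\Var(\hat t_{\Tr_4}(T_n)) = O(1/n)$, together with the quasirandom-forcing property of $\Tr_4$ \cite{coregliano2017density}. That property gives $t(\Tr_4,W)\neq \frac{1}{64}$ whenever $W\neq\frac12$ on a set of positive measure. For part (1) we need the sharper bound $\Var(\hat t_{\Tr_4}(T_n)) = O(1/n^2)$ under $W\equiv\frac12$. Chebyshev then gives $n(\hat t_{\Tr_4}(T_n)-\frac1{64}) = O_P(1)$.

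\textbf{Step 1: general variance bound.} Write
\[
\hat t_{\Tr_4}(T_n) = \frac{1}{(n)_4}\sum_{\bm s} f(\bm s), \qquad f(\bm s)=\prod_{a<b} a_{s_as_b},
\]
with the sum over ordered tuples of distinct indices. Expand the variance as a double sum of $\mathrm{Cov}(f(\bm s), f(\bm s'))$.

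If the index sets of $\bm s$ and $\bm s'$ are disjoint, then $f(\bm s)$ and $f(\bm s')$ depend on disjoint collections of latent variables $\eta$ and of edge variables. They are therefore independent, and the covariance vanishes.

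Pairs sharing at least one vertex number $O(n^7)$, and each covariance is bounded by $1$. This gives $\Var = O(n^7/n^8) = O(1/n)$, which completes part (2).

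\textbf{Step 2: first-order degeneracy under $H_0$.} Under $W\equiv\frac12$ the latent variables play no role: the $a_{uv}$, $u<v$, are i.i.d.\ $\mathrm{Ber}(\frac12)$. Now take two tuples sharing exactly one vertex. They share no vertex pair, so $f(\bm s)$ and $f(\bm s')$ are functions of disjoint sets of independent edge variables. Hence their covariance is exactly zero.

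The nonzero covariances therefore come only from tuples sharing at least two vertices. There are $O(n^6)$ such pairs, so $\Var = O(n^6/n^8) = O(1/n^2)$, which gives part (1).

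This is the crucial point, and it is where uniformity enters. For general $W$, sharing one vertex $\eta_u$ creates correlation through $\E[f\mid \eta_u]$, the Hoeffding projection onto single latent variables. Under $W\equiv\frac12$ this projection is constant, so that term drops out.

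\textbf{Main obstacle.} The argument is essentially bookkeeping. The step needing care is Step 2: one must check that, under $H_0$, two tuples sharing one vertex are genuinely independent. This holds because the diagonal convention $a_{uu}$ never enters, since the indices in each tuple are distinct, and there is no residual dependence through the $\eta$'s. Thus no finer Hoeffding decomposition is needed.
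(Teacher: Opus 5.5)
Your proposal is correct and follows the paper's proof essentially step for step. For part (2), you expand the variance of the $U$-statistic, note that only $O(n^7)$ overlapping tuple pairs contribute (so the variance is $O(1/n)$), and conclude with Chebyshev plus the quasirandom-forcing property of $\Tr_4$. For part (1), you observe that under $W\equiv\frac12$ covariances vanish for tuples sharing at most one vertex, which leaves $O(n^6)$ pairs and hence variance $O(1/n^2)$.
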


The proof of Proposition \ref{prop:Huniform} is given in Section \ref{sec:Hunifpf}. Now, consider the test function for the hypothesis \eqref{eq:H01uniform},  
\begin{align}\label{eq:uniformtr}
	\phi_n^{\mathrm{unif}} := \bm 1 \left\{ \left| \hat t_{\Tr_4}(T_n) - \frac{1}{64} \right| > \frac{1}{\sqrt n} \right\} . 
\end{align}  
Proposition \ref{prop:Huniform} implies that under $H_{0}$ as in \eqref{eq:H01uniform}, $\mathbb{P}_{H_{0}}( \phi_n^{\mathrm{unif}} )\rightarrow 0$, and under $H_{1}$, $\mathbb{P}_{H_{1}}( \phi_n^{\mathrm{unif}} )\rightarrow 1$. Hence, the test \eqref{eq:uniformtr} is consistent for the uniformity testing problem \eqref{eq:H01uniform}.

\subsection{Constructing Confidence Intervals} 
\label{sec:cWmethod}

Using the tests for regularity and uniformity discussed above, we can now describe our algorithm for constructing the confidence interval for $\zeta(W)$ (recall \eqref{eq:transitivityW}). For this, it is convenient to use the following estimator
$$\hat\delta_n = 1-\frac{24N_\triangle(T_n)}{n(n-1)(n-2)}.$$
Note, from \eqref{eq:EN}, that $\E[\hat \zeta_n] = \zeta(W)$, hence, $\hat{\delta}_n$
is an unbiased estimate of $\zeta(W)$.\footnote{Note that $\hat{\delta}_n$ differs slightly from the classical Kendall--Smith coefficient $\zeta(T_n)$ in \eqref{eq:coefficientTn}, whose normalization depends on the parity of $n$. However, both normalizations are asymptotically equivalent. }
Now, fixing $\alpha \in (0, 1)$, our algorithm proceeds in the following steps: 
\begin{itemize}
\item[(S1)] If the regularity test $\phi^{\mathrm{reg}}$ (recall \eqref{eq:varianceestimate}) rejects $H_0$ in \eqref{eq:secondtest}, then define 
\begin{equation}
    L_n =\left[ \hat \zeta_n -   z_{\alpha/2} \frac{ 24 \hat{\sigma}_{T_n}}{\sqrt n} , \hat \zeta_n +  z_{\alpha/2} \frac{ 24 \hat{\sigma}_{T_n}}{\sqrt n}  \right],
    \label{eq:irreg_ci}
\end{equation}  
    where $ \hat{\sigma}_{T_n}$ is as defined in \eqref{eq:varianceestimate} and $z_{\alpha}$ is the $(1-\alpha)$-th quantile of standard Gaussian distribution. 
\item[(S2)] If the regularity test $\phi^{\mathrm{reg}}$ accepts $H_0$ in \eqref{eq:secondtest}, but the uniformity test $\phi^{\mathrm{unif}}$ (recall \eqref{eq:uniformtr}) rejects $H_0$ in \eqref{eq:H01uniform}, then define 
    \begin{equation}
    L_n = \left[ \hat \zeta_n +   \hat q_{1-\alpha/2, T_n} \frac{ 24 }{ n} , \hat \zeta_n +  \hat q_{\alpha/2, T_n} \frac{ 24 }{ n}  \right] , 
    \label{eq:reg_aprx_ci}
\end{equation}
where $\hat q_{\alpha, T_n}$ is the $(1-\alpha)$-th quantile of the random variable $\hat X_\triangle (T_n)|T_n$ (recall \eqref{eq:XTn}). (In implementation, $\hat q_{\alpha,T_n}$ is computed as the empirical  $(1-\alpha)$-th quantile of $\hat X_\triangle(T_n)\mid T_n$, based on $B$ bootstrap resamples of the Gaussian multipliers.) 
\item[(S3)] If the regularity test $\phi^{\mathrm{reg}}$ accepts $H_0$ in \eqref{eq:secondtest} and the uniformity test $\phi^{\mathrm{unif}}$ accepts $H_0$ in \eqref{eq:H01uniform}, then define $L_n = 0$. 
\end{itemize}

The next theorem shows that the interval $L_n$ (more precisely, the `mixture' of intervals/points) defined above has asymptotically valid coverage. The proof is given in Section \ref{sec:Lnpf}.

\begin{thm} 
Let $L_n$ be as defined above. Then, for any $W$ with $\zeta(W) \in [0, 1)$,  
$$\lim_{n \rightarrow \infty} \P( \zeta(W) \in L_n) 
= \begin{cases} 
1 - \alpha & \text{ when } W \ne \frac{1}{2} \text{ on a set of positive measure},\\
1 & \text{ when } W = \frac{1}{2} \text{ almost everywhere}.  \\
\end{cases} 
$$
\label{thm:Ln} 
\end{thm}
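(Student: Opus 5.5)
The plan is to split according to which of the three regimes of Theorem~\ref{thm:N} holds for $W$, and to show two things in each regime. First, the pair of tests $(\phi_n^{\mathrm{reg}},\phi_n^{\mathrm{unif}})$ selects the correct branch (S1), (S2) or (S3) with probability tending to one. Second, on that branch the interval has the claimed asymptotic coverage. Since the event that the wrong branch is chosen has vanishing probability, it suffices to compute $\P(\zeta(W)\in L_n^{(k)})$ for the correct branch $k$. Throughout I read $\hat\zeta_n$ as $\hat\delta_n$. By \eqref{eq:EN},
\begin{equation*}
\hat\delta_n-\zeta(W)=-\frac{24\,(N_\triangle(T_n)-2\binom n3 t(\cc,W))}{n(n-1)(n-2)}.
\end{equation*}
Every coverage statement therefore reduces to a statement about the centered count $N_\triangle(T_n)-2\binom{n}{3}t(\cc,W)$.

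\emph{Case 1: $W$ is not $\triangle$-regular.} By Proposition~\ref{prop:H01r}(2), $\hat\sigma^2_{T_n}\to\sigma^2_W>0$ in probability, so $\phi_n^{\mathrm{reg}}=1$ with probability tending to one and branch (S1) is used. Multiplying the display above by $\sqrt n$ gives
\begin{equation*}
\sqrt n(\hat\delta_n-\zeta(W))=-24\,\frac{N_\triangle(T_n)-2\binom n3 t(\cc,W)}{n^{5/2}}\,(1+o(1)).
\end{equation*}
By Theorem~\ref{thm:N}(1) this converges in distribution to $N(0,24^2\sigma_W^2)$. Slutsky's lemma with $\hat\sigma_{T_n}\to\sigma_W$ then gives coverage $1-\alpha$ for \eqref{eq:irreg_ci}.

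\emph{Case 2: $W$ is $\triangle$-regular and $W\neq\frac12$ on a set of positive measure.} By Proposition~\ref{prop:H01r}(1), $\hat\sigma^2_{T_n}=O_P(1/n)=o_P(n^{-1/2})$, so $\phi_n^{\mathrm{reg}}=0$ with probability tending to one. By Proposition~\ref{prop:Huniform}(2), $|\hat t_{\Tr_4}(T_n)-\frac1{64}|$ is bounded away from zero in probability, so $\phi_n^{\mathrm{unif}}=1$ with probability tending to one. Hence branch (S2) is used. Here the relevant scaling is
\begin{equation*}
n(\hat\delta_n-\zeta(W))=-24\,\frac{N_\triangle(T_n)-\E N_\triangle(T_n)}{n^2}(1+o(1))\dto -24\,\mathcal L,
\end{equation*}
where $\mathcal L$ is the limit in \eqref{eq:reg}, by Theorem~\ref{thm:N}(2). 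The event $\zeta(W)\in L_n$ is $\{\hat q_{1-\alpha/2,T_n}\le -n(\hat\delta_n-\zeta(W))/24\le \hat q_{\alpha/2,T_n}\}$. By Theorem~\ref{thm:TnJ}, the conditional law of $\hat X_\triangle(T_n)$ given $T_n$ converges almost surely to the law of $\mathcal L$. This is where the non-degeneracy part of Theorem~\ref{thm:N}(2) is needed. The law of $\mathcal L$ is non-degenerate, and it is continuous: it either has a Gaussian component, or it is a nontrivial weighted sum of centered chi-squares, and in both cases it has no atoms. So the bootstrap quantiles converge to the true quantiles of $\mathcal L$, and another application of Slutsky's lemma gives coverage $1-\alpha$. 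I expect the main obstacle to be exactly this quantile-convergence step. It needs continuity and strict monotonicity of the distribution function of $\mathcal L$ at the relevant quantiles; continuity follows from the absence of atoms as above, and strict increase needs a short argument (e.g.\ the support of such a limit is an interval). The bootstrap convergence must also be combined with the unconditional convergence of the statistic, which follows because the quantiles are $T_n$-measurable and converge almost surely to constants.

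\emph{Case 3: $W\equiv\frac12$.} Here $W$ is degree-regular, hence $\triangle$-regular, so Proposition~\ref{prop:H01r}(1) gives $\phi_n^{\mathrm{reg}}=0$ with probability tending to one. Proposition~\ref{prop:Huniform}(1) gives $|\hat t_{\Tr_4}(T_n)-\frac1{64}|=O_P(1/n)=o_P(n^{-1/2})$, so $\phi_n^{\mathrm{unif}}=0$ with probability tending to one. Branch (S3) is therefore selected with probability tending to one, and $L_n=\{0\}$. Since $t(\cc,W_{1/2})=\frac18$, we have $\zeta(W)=1-8\cdot\frac18=0\in L_n$. This gives limiting coverage $1$ and completes the proof.
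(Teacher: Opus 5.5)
Your proposal is correct and takes essentially the same route as the paper: you use the same three-way case split, the same Propositions~\ref{prop:H01r} and~\ref{prop:Huniform} to show the correct branch (S1)--(S3) is selected with probability tending to one, and Theorem~\ref{thm:N} with Slutsky's lemma (plus Theorem~\ref{thm:TnJ} for the bootstrap quantiles in the $\triangle$-regular, non-uniform case) for coverage. Your check that the limit in \eqref{eq:reg} is non-degenerate and atomless, with a strictly increasing distribution function at the relevant quantiles, is exactly what justifies the paper's brief appeal to P\'olya's theorem, so it adds useful detail without changing the argument.
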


\begin{remark}
Observe from Theorem \ref{thm:Ln} that the asymptotic coverage of $L_n$ is $1-\alpha$ whenever $W \ne \frac{1}{2}$ on a set of positive measure. This corresponds to the intervals in \eqref{eq:irreg_ci} and \eqref{eq:reg_aprx_ci}. On the other hand, if both the regularity test $\phi^{\mathrm{reg}}$ (recall \eqref{eq:secondtest}) and the uniformity test $\phi^{\mathrm{unif}}$ (recall \eqref{eq:uniformtr}) accept their respective null hypotheses, then we know, with probability tending to one, that $W$ must be the constant function $\frac{1}{2}$, and hence, $\zeta(W)=0$. Consequently, we report the single point $L_n=0$ as our confidence ``interval'', which has asymptotic coverage equal to 1. Although this procedure is asymptotically valid, in finite samples both the regularity and uniformity tests may make errors, and reporting only a single point can easily lead to finite-sample miscoverage. For this reason, in practice it may be more stable to report an interval based on the asymptotic distribution in \eqref{eq:Wuniform} instead. Specifically, when both $\phi^{\mathrm{reg}}$ and $\phi^{\mathrm{unif}}$ accept their respective null hypotheses, we can report the following interval, based on the asymptotic distribution in \eqref{eq:Wuniform}:
\begin{equation}
L_n =
\left[
\hat \zeta_n - z_{\alpha/2} \frac{\sqrt{18}}{n^{\frac{3}{2}}},
\hat \zeta_n + z_{\alpha/2} \frac{\sqrt{18}}{n^{\frac{3}{2}}}
\right] . 
\label{eq:intervaluniform}
\end{equation}
If the above interval is used in step (S3) of the algorithm described above, then the asymptotic coverage of $L_n$ will be $1-\alpha$ for all tournamentons $W$ (including the case when $W \equiv \frac{1}{2}$).  
\end{remark}

\begin{remark}  
Observe that Propositions \ref{prop:H01r} and \ref{prop:Huniform} both show, under their respective null hypotheses, the corresponding test statistics are $O_P(1/n)$. This means that even if we replace the $1/\sqrt{n}$ rejection thresholds in \eqref{eq:testHW} and \eqref{eq:uniformtr} by $1/a_n$, for any diverging sequence $a_n$ such that $a_n/n \to 0$, the resulting tests will still be consistent for \eqref{eq:secondtest} and \eqref{eq:H01uniform}. Although any such choice is sufficient to guarantee the correct asymptotic coverage of the confidence interval, in practice, varying the thresholds leads to the following dichotomy:
\begin{itemize} 
\item For smaller thresholds, both tests begin to incur Type I errors: (1) the regularity test may identify $W$ as not $\triangle$-regular when it is, in fact, regular; and (2) the uniformity test may identify $W$ as not uniform when it is, in fact, uniform. This results in confidence intervals whose widths are of a larger order than that predicted by the asymptotic results. Such intervals typically still contain the true value of $\zeta(W)$, but will have conservative coverage. 
\item On the other hand, for larger thresholds, these tests begin to incur Type II errors. In this case, the resulting confidence intervals are narrower than they should be, which can fail to cover $\zeta(W)$ and lead to reduced coverage. 
\end{itemize}
Since undercoverage is typically more undesirable than conservative coverage, it is generally advisable in practice to use relatively smaller rejection thresholds. 
\end{remark}

\subsection{Numerical Experiments} 
\label{sec:simulation}

In this section, we illustrate the empirical performance of the proposed algorithm through simulations. To this end, we consider the following choices of $W$: 

\begin{itemize}

    \item[(1)] $W$ is the BTL tournamenton with reward function $f(x)=x,$ that is, $W(x,y)=\frac{x}{x+y}.$   In this case, $W$ is not $\triangle$-regular and $\zeta(W)\approx0.333$ (computed by numerical integration).

    \item[(2)] $W$ is the carousel tournamenton with $\alpha=0.8.$ (recall \eqref{eq:circularW}). 
    In this case, $W$ is also not $\triangle$-regular and a direct computation shows that $\zeta(W)=0.648.$
    
        \item[(3)] $W$ is the Condorcet tournamenton with parameter $p=0.3$ (recall  \eqref{eq:randomW}). In this case, $W$ is $\triangle$-regular and $\zeta(W)=1-4p(1-p) = 0.16.$
        
    \item[(4)] $W \equiv \frac{1}{2}$ is the uniform random tournamenton. 
\end{itemize}
%We consider the following 2 choices of $W$ which are not $\triangle$-regular. 
\begin{figure}
    \centering
    \includegraphics[width=0.45\linewidth]{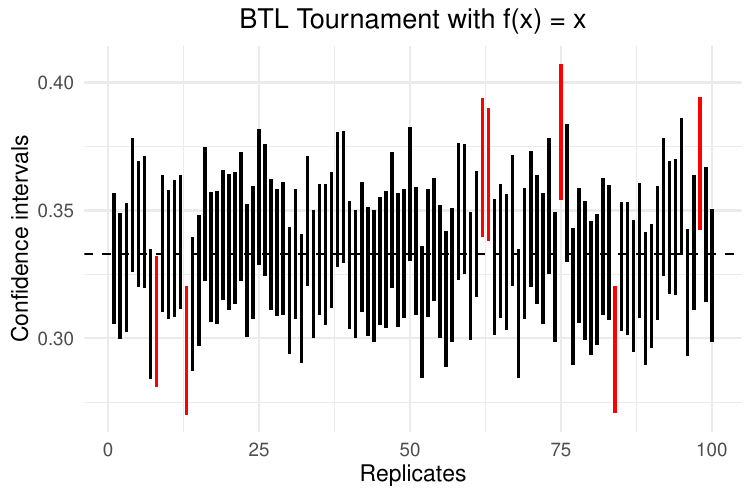} 
        \includegraphics[width=0.45\linewidth]{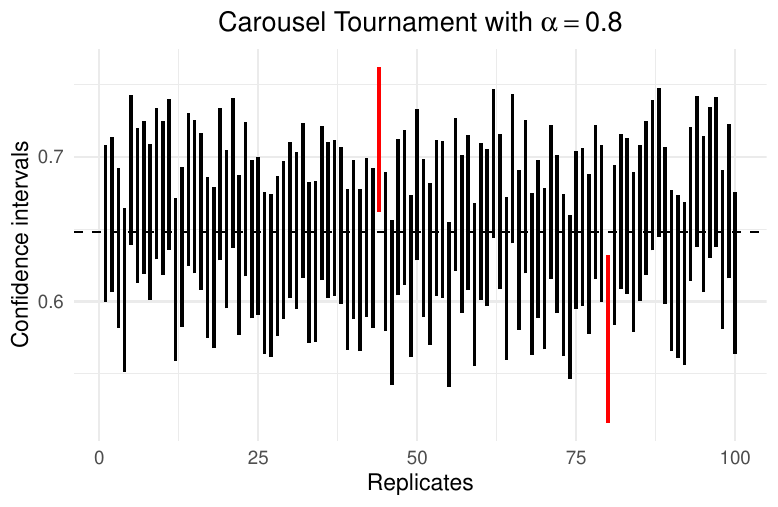} \\
          \small{\;\;\;\;\;\;\;\;(a)   \hspace{2.6in} (b)} 
    \caption{\small{Confidence intervals for (a) the BTL model with $f(x)=x$ (coverage: 93\%), and (b) 
    the carousel tournamenton with parameter $\alpha=0.8$ (coverage: 98\%). }}
    \label{fig:interval}
\end{figure} 
For each of the above choices of $W$, we construct 100 instances of $95\%$ confidence intervals for $\zeta(W)$. Each interval is computed based on a tournament $T_n \sim T(n, W)$, with $n=1000$. The nominal level is set to $0.95$. The quantiles in \eqref{eq:reg_aprx_ci} are calculated based on $B=1000$ multiplier bootstrap samples. Further, when both $\phi^{\mathrm{reg}}$ and $\phi^{\mathrm{unif}}$ accept their respective null hypotheses, we compute the interval in \eqref{eq:intervaluniform}. The results are shown in Figures \ref{fig:interval} and \ref{fig:intervalregular}.  The black dotted line represents the true value of the parameter $\zeta(W)$, and the intervals that do not contain $\zeta(W)$ are shown in red. In all cases, the empirical coverage of the intervals is very close to $0.95$, validating the asymptotic theory. It is worth recalling that the algorithm does not have any prior knowledge of whether $W$ is $\triangle$-regular or uniform. In each case, it tests for regularity and, if necessary, also for uniformity, and then constructs the interval accordingly.

\begin{figure}
    \centering
\includegraphics[width=0.45\linewidth]{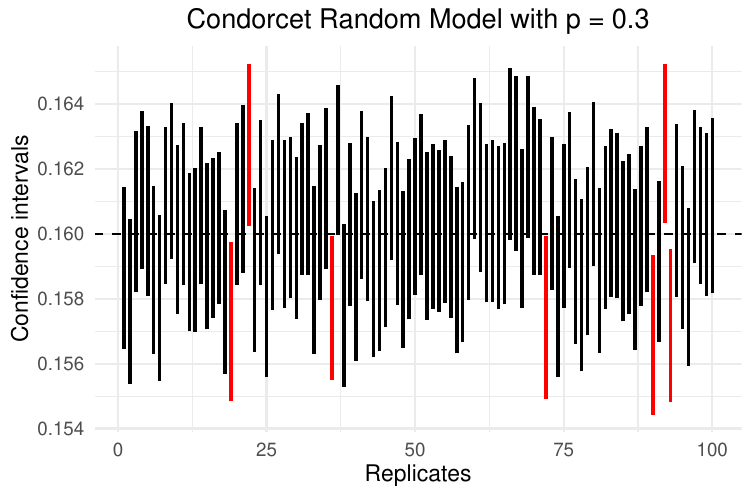}
    \includegraphics[width=0.45\linewidth]{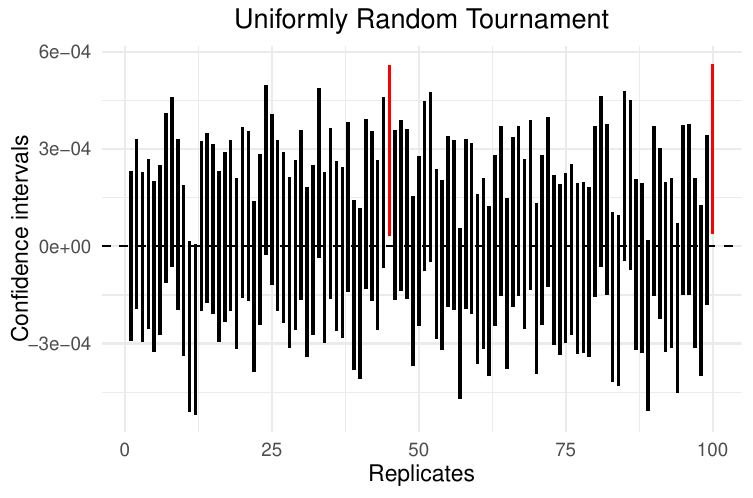} \\ 
          \small{\;\;\;\;\;\;\;\;(a)   \hspace{2.6in} (b)} 
    \caption{\small{Confidence interval for (a) Condorcet tournamenton with $p=0.3$ (coverage: 93\%), and (b) uniform random tournament $W \equiv \frac{1}{2}$ (coverage: 98\%). }} 
    \label{fig:intervalregular}
\end{figure}

\normalsize

\section{Proof of Theorem \ref{thm:N}} 
\label{sec:Npf}

\subsection{Preparations}

We begin with the following useful representation of $N_{\triangle}(T_n)$.

\begin{lem} Let $N_{\triangle}(T_n)$ be as defined in \eqref{eq:N}. Then 
\begin{align}\label{eq:NTn}
N_{\triangle}(T_n)=\sum_{1\le u<v< w\le n}Q(u,\,v,\,w) , 
%= \frac{n(n-1)(2n-1)}{12}-\frac{1}{2}\sum_{u=1}^n s_u^2, 
\end{align} 
where $Q(u,\,v,\,w)=\frac{1}{2}\left(a_{uv}a_{vw}+a_{vw}a_{wu}+a_{wu}a_{uv}+a_{vu}a_{wv}+a_{wv}a_{uw}+a_{uw}a_{vu}-1\right)$. 
\label{lem:N1}
\end{lem}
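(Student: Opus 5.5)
The identity is local: it suffices to show that for each fixed triple $u<v<w$,
\begin{align*}
a_{uv}a_{vw}a_{wu}+a_{uw}a_{wv}a_{vu} = Q(u,v,w),
\end{align*}
after which summing over $1\le u<v<w\le n$ and comparing with \eqref{eq:N} gives \eqref{eq:NTn}. This is a pointwise identity in the three Boolean variables $x=a_{uv}$, $y=a_{vw}$, $z=a_{wu}$. The remaining entries are determined by them through $a_{vu}=1-x$, $a_{wv}=1-y$ and $a_{uw}=1-z$. The plan is to verify it algebraically, and then sanity-check it by cases.

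Substituting into the six products in $Q$ gives
\begin{align*}
a_{uv}a_{vw}+a_{vw}a_{wu}+a_{wu}a_{uv} &= xy+yz+zx,\\
a_{vu}a_{wv}+a_{wv}a_{uw}+a_{uw}a_{vu} &= (1-x)(1-y)+(1-y)(1-z)+(1-z)(1-x).
\end{align*}
The second line expands to $3-2(x+y+z)+(xy+yz+zx)$. Therefore
\[
2Q = 2(xy+yz+zx)-2(x+y+z)+2,
\]
that is, $Q=1-(x+y+z)+(xy+yz+zx)$.

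The left-hand side equals $xyz+(1-x)(1-y)(1-z)$. Expanding the second product gives $1-(x+y+z)+(xy+yz+zx)-xyz$, so the $xyz$ terms cancel and the left-hand side is also $1-(x+y+z)+(xy+yz+zx)$. The two sides agree. This identity holds for all real $x,y,z$, not only Boolean ones, so no idempotence is needed.

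As an interpretive check, a triple is cyclic exactly when $x=y=z\in\{0,1\}$. In that case $Q=1$. Otherwise the triple is transitive and $Q=0$: the two values $(x,y,z)=(1,1,0)$ and $(1,0,0)$ give $1-2+1=0$ and $1-1+0=0$, and the other non-constant patterns follow by symmetry. This matches the convention that $Q$ counts the $2$-paths in the triple, with $Q=\tfrac12(3-1)=1$ for a cycle and $Q=\tfrac12(1-1)=0$ for a transitive triple.

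I do not expect a genuine obstacle. The only care needed is bookkeeping: each of the six terms in $Q$ is a consecutive pair of arcs, forming a directed $2$-path through one of the three vertices. The first three terms are the $2$-paths in the clockwise orientation and the last three are those in the anticlockwise orientation. Pairing them this way makes the expansion above transparent. The payoff of the representation is that $Q$ has degree two in the edge indicators, which is what feeds the generalized $U$-statistic decomposition used afterwards.
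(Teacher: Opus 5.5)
Your proof is correct and is essentially the paper's argument: both use $a_{vu}=1-a_{uv}$ to reduce to three indicators and expand $(1-x)(1-y)(1-z)$, so that the cubic term cancels. The only cosmetic difference is how $Q$ is reached. The paper writes the cyclic-triad indicator in two such forms, one from each orientation, and adds them to obtain the symmetric expression $Q$. You instead expand $Q$ directly in $x,y,z$ and match it to $1-(x+y+z)+(xy+yz+zx)$.
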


\begin{proof} 
Since $a_{uv}+a_{vu}=1$, for $1\le u<v\le n,$
\begin{align*}
    a_{uv}a_{vw}a_{wu}+a_{vu}a_{wv}a_{uw}&=a_{uv}a_{vw}a_{wu}+(1-a_{uv})(1-a_{vw})(1-a_{wu})\\
    &=1-(a_{uv}+a_{vw}+a_{wu})+(a_{uv}a_{vw}+a_{vw}a_{wu}+a_{wu}a_{uv}).
\end{align*}
Similarly,
\begin{align*}
    a_{uv}a_{vw}a_{wu}+a_{vu}a_{wv}a_{uw}=1-(a_{vu}+a_{wv}+a_{uw})+(a_{vu}a_{wv}+a_{wv}a_{uw}+a_{uw}a_{vu}).
\end{align*}
Adding the above identities and again using $a_{uv}+a_{vu}=1$ gives, 
$$a_{uv}a_{vw}a_{wu}+a_{vu}a_{wv}a_{uw}=Q(u,\,v,\,w).$$
Hence, recalling \eqref{eq:N}, the result in \eqref{eq:NTn} follows. 
\end{proof}

Now, suppose $\{\eta_u:1\le u\le n\}$ be i.i.d. $\Unif([0,\,1])$, as before, and $\{\vartheta_{uv}:1\le u<v\le n\}$ be another i.i.d. collection of $\Unif([0,\,1])$ random variables that is also independent of $\{\eta_u:1\le u\le n\}$. Define $\vartheta_{vu}=1-\vartheta_{uv}$ for $1\le u<v\le n.$ Then, from Lemma \ref{lem:N1}, we can write, 
\begin{align}\label{eq:Nuvw}
N_{\triangle}(T_n)=\sum_{1\le u<v< w\le n}f(\eta_u,\,\eta_v,\,\eta_w,\,\vartheta_{uv},\,\vartheta_{vw},\,\vartheta_{wu}) , 
\end{align} 
where
\begin{align}
    f(\eta_u,\,\eta_v,\,\eta_w, & \,\vartheta_{uv},\,\vartheta_{vw},\,\vartheta_{wu}) \nonumber \\ 
    &=\bm 1\{\vartheta_{uv}\le W(\eta_u,\,\eta_v)\}\bm 1\{\vartheta_{vw}\le W(\eta_v,\,\eta_w)\}\bm 1\{\vartheta_{wu}\le W(\eta_w,\,\eta_u)\} \nonumber \\ 
    & \hspace{0.5in} + \bm 1\{\vartheta_{vu}\le W(\eta_v,\,\eta_u)\}\bm 1\{\vartheta_{wv}\le W(\eta_w,\,\eta_v)\}\bm 1\{\vartheta_{uw}\le W(\eta_u,\,\eta_w)\}.
    \label{eq:f}
\end{align}
This is because, for every $1\le u<v\le n,$
$$a_{uv}=\bm 1\{\vartheta_{uv}\le W(\eta_u,\,\eta_v)\} \text{ and }  a_{vu}=1-a_{uv}=\bm 1\{\vartheta_{vu}\le W(\eta_v,\,\eta_u)\}.$$ 
Note also that the function $f$ is symmetric, in the sense that
$$f(\eta_u,\,\eta_v,\,\eta_w,\,\vartheta_{uv},\,\vartheta_{vw},\,\vartheta_{wu})=f(\eta_{\sigma(u)},\,\eta_{\sigma(v)},\,\eta_{\sigma(w)},\,\vartheta_{\sigma(u)\sigma(v)},\,\vartheta_{\sigma(v)\sigma(w)},\,\vartheta_{\sigma(w)\sigma(u)}) , $$
for every permutation $\sigma:\{u,\,v,\,w\}\to\{u,\,v,\,w\}.$ 
%Lemma \ref{lem:N1} ensures that $f$ is symmetric according to the notion of symmetry in \cite{janson1991asymptotic}. Now we recall some definitions and notations from \cite{janson1991asymptotic}.

\subsection{Orthogonal Decomposition of $N_{\triangle}(T_n)$}

The representation of $N_{\triangle}(T_n)$ in \eqref{eq:Nuvw} allows us to invoke the framework of generalized $U$-statistics developed in \cite{janson1991asymptotic}. We begin by recalling some notations and definitions from \cite{janson1991asymptotic}, adapted, in particular, to the case of $N_{\triangle}(T_n)$. Suppose $\{\eta_{1}, \eta_2, \eta_3\}$ and $\{\vartheta_{12}, \vartheta_{23}, \vartheta_{13}\}$ are i.i.d.\ sequences of $\mathrm{Unif}[0,1]$ random variables. Denote by $K_3$ the complete
graph on the set of vertices $\{1, 2, 3\}$ and let $G= (V(G), E(G))$
be a subgraph of $K_{3}$. Let $\mathcal{F}_{G}$ be the $\sigma$-algebra
generated by the collections $\{\eta_u\}_{u \in V(G)}$ and $\{\vartheta_{uv}\}_{(u, v) \in E(G)}$, and let $L^{2}(G)=L^2(\mathcal F_G)$  be the space of all square integrable random
variables that are functions of $\{\eta_u\}_{u \in V(G)}$ and $\{\vartheta_{uv}\}_{(u, v) \in E(G)}$. Now, consider the following subspace of $L^{2}(G)$: 
\begin{align*}
M_{G}:=\{Z\in L^{2}(G) : \mathbb{E}[ZV]=0\text{ for every }V\in L^{2}(H)\text{ such that }H\subset G\}. 
\end{align*}
(For the empty graph, $M_{\emptyset}$ is the space of all constants.) 
Equivalently,
%One important implication of the above definition is that, 
$Z\in M_{G}$ if and only if $Z\in L^{2}(G)$ and 
\begin{align*}%\label{eq:condition_MG}
\mathbb{E}\left[Z\mid \{ \eta_{u} : u \in V(H)\}, \{ \vartheta_{uv}: (u, v) \in E(H)\} \right] = 0,
\quad \text{for all } H\subset G.
\end{align*}
Then,  we have the orthogonal decomposition 
(see \cite[Lemma 1]{janson1991asymptotic}),  
\begin{align}\label{eq:LGH}
L^{2}(G)=\bigoplus_{H\subseteq G} M_H, 
\end{align}
that is, $L^{2}(G)$ is the orthogonal direct sum of $M_{H}$ for all 
subgraphs $H\subseteq G$. 
This %The above decomposition 
allows us to decompose a function $f \in L^{2}(G)$ as the sum of  its
projections onto $M_{H}$ for $H\subseteq G$.  To this end, for any closed subspace $M$ of $L^2(K_3)$, denote the orthogonal projection
onto $M$ by $P_M$.
Then, we have the  decomposition 
\begin{align*}%\label{eq:fH}
f=\sum_{H\subseteq G} f_{H},
\end{align*}
where $f_{H}=P_{M_H}f$ is the orthogonal
projection of $f$ onto $M_{H}$. Further, for $1 \leq s \leq 3$, define
\begin{align}\label{eq:fs}
    f_{(s)}:=\sum_{H\subseteq G: |V(H)|=s} f_{H} .
\end{align}
The smallest positive $d$ such that $f_{(d)}\neq 0$ is called the {\it
  principal degree} of $f$.   The asymptotic distribution of $N_{\triangle}(T_n)$
depends on the principal degree of $f$ and the subgraphs that appear in the decomposition.

For any graph $G\subseteq K_n$, the orthogonal projection %$P_{L^2(G)}$ 
onto $L^2(G)=L^2(\mathcal F_G)$ equals the conditional expectation
$\E[\cdot\mid\mathcal F_G]$, that is, 
\begin{align}\label{pl1}
  P_{L^2(G)}=\E [ \cdot\mid\mathcal F_G ] . 
\end{align}
Moreover, by \eqref{eq:LGH}, we have
\begin{align}\label{pl2}
P_{L^{2}(G)}=\sum_{H\subseteq G} P_{M_H}. 
\end{align}
The above identities allow us to express any $P_{M_H}$ as a linear combination of conditional expectations. We will compute these for different choices of $H$ in the following sections.

\subsection{Proof of Theorem \ref{thm:N} (1)}

For $1 \leq s \leq 3$, denote by $K_{\{s\}}$ the graph containing a single isolated vertex $s.$ 

\begin{lem}
Let $f$ be as defined in \eqref{eq:f}. Then 
$$f_{K_{\{1\}}}=2t_{W}^\triangle(\eta_1)-\E[f],$$
where  $t_{W}^\triangle$ is as defined in \eqref{eq:g}. 
\label{lem:eta}
\end{lem}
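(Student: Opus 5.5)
We compute the projection of $f$ onto $M_{K_{\{1\}}}$ using \eqref{pl1} and \eqref{pl2} applied to the graph $G=K_{\{1\}}$. Its only subgraphs are the empty graph and $K_{\{1\}}$ itself. So $P_{L^2(K_{\{1\}})}=P_{M_\emptyset}+P_{M_{K_{\{1\}}}}$. Since $P_{M_\emptyset}f=\E[f]$ and $P_{L^2(K_{\{1\}})}f=\E[f\mid \eta_1]$, we obtain
\begin{align*}
f_{K_{\{1\}}}=\E[f\mid\eta_1]-\E[f].
\end{align*}
The proof therefore reduces to showing $\E[f\mid \eta_1]=2t^\triangle_W(\eta_1)$.

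For this, write $f$ as in \eqref{eq:f}, with vertices $1,2,3$ in place of $u,v,w$. Condition first on all three latent variables $\eta_1,\eta_2,\eta_3$. The edge variables $\vartheta_{12},\vartheta_{23},\vartheta_{31}$ are i.i.d.\ uniform and independent of the $\eta$'s. Also, $\vartheta_{vu}=1-\vartheta_{uv}$ is again uniform, and each indicator depends on a distinct edge. Hence
\begin{align*}
\E[f\mid\eta_1,\eta_2,\eta_3]
&=W(\eta_1,\eta_2)W(\eta_2,\eta_3)W(\eta_3,\eta_1)\\
&\quad+W(\eta_2,\eta_1)W(\eta_3,\eta_2)W(\eta_1,\eta_3).
\end{align*}
One subtlety needs checking: $\bm 1\{\vartheta_{vu}\le W(\eta_v,\eta_u)\}$ must have conditional mean $W(\eta_v,\eta_u)$. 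This holds because $1-\vartheta_{uv}$ is uniform. Equivalently, it equals $1-a_{uv}$ almost surely, using $W(\eta_v,\eta_u)=1-W(\eta_u,\eta_v)$.

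Next, integrate out $\eta_2,\eta_3$ by the tower property, using independence of $\eta_1$ from $\eta_2,\eta_3$. The first term gives $\int_{[0,1]^2}W(\eta_1,y)W(y,z)W(z,\eta_1)\,\mathrm dy\,\mathrm dz=t^\triangle_W(\eta_1)$, by \eqref{eq:g}. The second term gives $\int W(y,\eta_1)W(z,y)W(\eta_1,z)\,\mathrm dy\,\mathrm dz$. Relabeling $y\leftrightarrow z$ turns this into $\int W(\eta_1,y)W(y,z)W(z,\eta_1)\,\mathrm dy\,\mathrm dz$, which is again $t^\triangle_W(\eta_1)$. Summing the two terms gives $2t^\triangle_W(\eta_1)$.

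The only point requiring care is this last relabeling, where we identify the anti-clockwise conditional density with the clockwise one. It is a change of variables via Fubini, valid since $W$ is bounded and measurable. As a byproduct, we get $\E[f]=2t(\cc,W)$, consistent with \eqref{eq:EN}.
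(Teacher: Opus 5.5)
Your proposal is correct and follows the same route as the paper. The paper also writes $f_{K_{\{1\}}}=\E[f\mid\eta_1]-\E[f]$ via \eqref{pl1} and \eqref{pl2}, then integrates out $\eta_2,\eta_3$ so that both orientations contribute $t^{\triangle}_W(\eta_1)$; your extra checks (the conditional mean of the reversed-edge indicators and the $y\leftrightarrow z$ relabeling) are details the paper leaves implicit.
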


\begin{proof} By \eqref{pl1} and \eqref{pl2}, 
\begin{equation}
    f_{K_{\{1\}}}=P_{M(K_{\{1\}})}f=P_{L^2(K_{\{1\}})}f-P_{M(\emptyset)}f=\E[f\mid \eta_1]-\E[f].
    \label{eq:proj_1k}
\end{equation}  
Now, recalling \eqref{eq:f} observe that 
\begin{align}
    \E[f\mid \eta_1]
    %&=\E[a_{12}a_{23}a_{31}+a_{21}a_{32}a_{13}\mid \eta_1]\notag\\
    %&=\E_{\eta_2,\,\eta_3}[\E[a_{12}a_{23}a_{31}+a_{21}a_{32}a_{13}\mid \eta_1,\,\eta_2,\,\eta_3]]\notag\\
    &=\E_{\eta_2, \eta_3}[W(\eta_1,\,\eta_2)W(\eta_2,\,\eta_3)W(\eta_3,\,\eta_1)+W(\eta_2,\,\eta_1)W(\eta_3,\,\eta_2)W(\eta_1,\,\eta_3) ]\notag\\
    &=2\int_{[0,\,1]^2}W(\eta_1,\,y)W(y,\,z)W(z,\,\eta_1)\mathrm{d}y\mathrm{d}z=2t_{W}^\triangle(\eta_1) . 
    \label{eq:E_f}
\end{align}
Combining \eqref{eq:proj_1k} and \eqref{eq:E_f}  proves the result in Lemma \ref{lem:eta}. 
\end{proof}

From \eqref{eq:fs} and Lemma \ref{lem:eta}, we have that
$$f_{(1)}=2t_{W}^\triangle(\eta_1)+2t_{W}^\triangle(\eta_2)+2t_{W}^\triangle(\eta_3)-3\E[f].$$
This implies, since $\eta_1, \eta_2, \eta_3$ are i.i.d. $\mathrm{Unif}([0, 1])$, 
\begin{align*}
   \Var[f_{(1)}]= 12 \Var[t_{W}^\triangle(\eta_1)] & = 12 \left\{ \int_0^1 t_{W}^\triangle(x)^2 \mathrm d x  - \left( \int_0^1 t_{W} ^\triangle(x) \mathrm d x \right)^2 \right\}  \nonumber \\ 
   & = 12 \{ t(\cccc \,,\, W) - t( \cc, W)^2 \} , 
   \end{align*} 
   where the last step follows from \eqref{eq:ccjoin}. This shows that if $W$ is not $\triangle$-regular, that is,  $t_{W}^\triangle$ is non-constant, then $f_{(1)} \ne 0$, hence,  $f$ has principal degree $d=1$.  Then \cite[Theorem 1]{janson1991asymptotic} gives the result in \eqref{eq:irreg} with 
    \begin{align}\label{lvb2}
    \sigma_W^2 = \frac{1}{12}\text{Var}[f_{(1)}]  =  t(\cccc \,,\, W) - t( \cc, W)^2 , 
\end{align} 
as in \eqref{eq:irvariance}. 
%using \eqref{eq:variancef1} and \eqref{ttau} in the last step.
This completes the proof of Theorem \ref{thm:N} (1) when $W$ is not $\triangle$-regular.
In fact, \eqref{eq:irreg} also holds when $W$ is $\triangle$-regular, with
$f_{(1)}=0$ and $\sigma_W^2=0$. Although this case is not included in the
statement of \cite[Theorem 1]{janson1991asymptotic}, it follows from its proof, as
a consequence of \cite[Lemma 2]{janson1991asymptotic} (see also 
\cite[Corollary 11.36]{SJIII}). Consequently, the result in \eqref{eq:irreg} holds for any tournamenton $W$.

\subsection{Proof of Theorem \ref{thm:N} (2)}

We first prove \eqref{eq:reg} in Section \ref{sec:regpf}. Then in Section \ref{sec:uniformpf} we show that the degeneracy of the limiting distribution in \eqref{eq:reg} is equivalent to $W(x,\,y) = \frac{1}{2}$ almost everywhere.

\subsubsection{Proof of \eqref{eq:reg} } 
\label{sec:regpf}

In this case, $W$ is $\triangle$-regular, hence $f_{(1)}\equiv 0$.
Therefore, we consider $f_{(2)}$ (recall \eqref{eq:fs}) which can be written as 
\begin{align}\label{eq:f2}
 f_{(2)} = \sum_{1\leq s < t \leq 3} \left( f_{E_{\{s, t\}}} +f_{K_{\{s, t\}}} \right) , 
\end{align}
where $E_{\{s, t\}} = (\{s, t\}, \emptyset)$ is the graph with two vertices
$s$ and $t$ and no edge, and $K_{\{s, t\}} = (\{s, t\}, \{ (s, t) \} )$ is
the complete graph with vertices $s$ and $t$. Note that $\E[f_{(2)}]=0$, and hence, $\Var[f_{(2)}]=0$ if and only if $f_{(2)}=0$, almost everywhere.

\begin{lem}
Let $f$ be as in \eqref{eq:f}. Then the following hold: 
%The projections of the function $f$ in \eqref{eq:f} onto $M(K_{\{1\}}),\,M(E_{\{1,\,2\}}),$ and $M(K_{\{1,\,2\}})$ are as follows:
\begin{enumerate}
    \item[$(1)$] $f_{E_{\{1,\,2\}}}=2\triangle_{W}(\eta_1,\,\eta_2)-2t_{W}^\triangle(\eta_1)-2t_{W}^\triangle(\eta_2)+\E[f],$
    \item[$(2)$] $f_{K_{\{1,\,2\}}} = \bm 1\{\vartheta_{12}\le W(\eta_1,\,\eta_2)\} t_W( \lp, \eta_1, \eta_2 ) + \bm 1\{\vartheta_{21}\le W(\eta_2,\,\eta_1)\} t_W( \lp, \eta_2, \eta_1 ) - 2\triangle_{W}(\eta_1,\,\eta_2) ,$
\end{enumerate}
where the functions $t_{W}^\triangle$ and $\triangle_{W}$ are defined in \eqref{eq:g} and \eqref{eq:W_tr}, respectively.
\label{lem:proj}
\end{lem}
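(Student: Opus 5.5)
The plan is to compute the two projections directly from \eqref{pl1} and \eqref{pl2}, in the same way as Lemma \ref{lem:eta}. For $H=E_{\{1,2\}}$, the subgraphs of $E_{\{1,2\}}$ are $\emptyset$, $K_{\{1\}}$, $K_{\{2\}}$ and $E_{\{1,2\}}$ itself. Hence
\[
f_{E_{\{1,2\}}}=\E[f\mid \eta_1,\eta_2]-f_{K_{\{1\}}}-f_{K_{\{2\}}}-\E[f].
\]
We already know $f_{K_{\{i\}}}=2t_W^\triangle(\eta_i)-\E[f]$ from Lemma \ref{lem:eta}. So the only new ingredient is $\E[f\mid\eta_1,\eta_2]$.

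To compute it, I first integrate out all the edge variables $\vartheta$ given $(\eta_1,\eta_2,\eta_3)$. This replaces each indicator $\bm 1\{\vartheta_{st}\le W(\eta_s,\eta_t)\}$ by $W(\eta_s,\eta_t)$. Integrating over $\eta_3$ then gives
\[
W(\eta_1,\eta_2)\,t_W(\lp,\eta_1,\eta_2)+W(\eta_2,\eta_1)\,t_W(\lp,\eta_2,\eta_1)=2\triangle_W(\eta_1,\eta_2),
\]
using \eqref{eq:Wp}. Substituting this into the display above gives $2\triangle_W-2t_W^\triangle(\eta_1)-2t_W^\triangle(\eta_2)+2\E[f]-\E[f]$, which is item (1).

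For $H=K_{\{1,2\}}$, its subgraphs are those of $E_{\{1,2\}}$ together with $K_{\{1,2\}}$ itself. Therefore
\[
f_{K_{\{1,2\}}}=\E[f\mid\eta_1,\eta_2,\vartheta_{12}]-\E[f\mid\eta_1,\eta_2],
\]
because the sum of the lower projections telescopes to $\E[f\mid \eta_1,\eta_2]$ by \eqref{pl2}. To compute the first term, I keep the indicator $a_{12}=\bm 1\{\vartheta_{12}\le W(\eta_1,\eta_2)\}$ and its complement $a_{21}$ fixed. I integrate out $\vartheta_{23}$, $\vartheta_{31}$ and $\eta_3$; these are independent of $\vartheta_{12}$ given $(\eta_1,\eta_2)$.

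In the clockwise term, the factor $W(\eta_2,\eta_3)W(\eta_3,\eta_1)$ integrates over $\eta_3$ to $t_W(\lp,\eta_1,\eta_2)$. In the anticlockwise term, the factor $W(\eta_1,\eta_3)W(\eta_3,\eta_2)$ integrates to $t_W(\lp,\eta_2,\eta_1)$. Subtracting $2\triangle_W(\eta_1,\eta_2)$ then yields item (2).

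The main point requiring care is the bookkeeping of which subgraphs enter \eqref{pl2}, and the correct matching of the orientation convention $\vartheta_{vu}=1-\vartheta_{uv}$ with the arguments of $t_W(\lp,\cdot,\cdot)$. In particular, one must check that $a_{21}$ pairs with $t_W(\lp,\eta_2,\eta_1)$, as in \eqref{eq:rpxy}. Everything else is routine conditional expectation. As a sanity check, I will verify that $f_{K_{\{1,2\}}}$ has zero conditional mean given $(\eta_1,\eta_2)$. This holds because $\E[a_{12}\mid\eta_1,\eta_2]=W(\eta_1,\eta_2)$, which returns exactly $2\triangle_W$.
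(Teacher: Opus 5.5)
Your proposal is correct and follows essentially the same route as the paper: you express both projections via \eqref{pl1}--\eqref{pl2} as $\E[f\mid\eta_1,\eta_2]-\E[f\mid\eta_1]-\E[f\mid\eta_2]+\E[f]$ and $\E[f\mid\eta_1,\eta_2,\vartheta_{12}]-\E[f\mid\eta_1,\eta_2]$, and you evaluate the conditional expectations exactly as in \eqref{eq:E_f3} and \eqref{eq:E_f2}. Your orientation bookkeeping also matches the paper's: $a_{12}$ pairs with $t_W(\lp,\eta_1,\eta_2)$ and $a_{21}$ pairs with $t_W(\lp,\eta_2,\eta_1)$.
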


\begin{proof} By \eqref{pl2} and \eqref{eq:proj_1k},  
\begin{align}
    f_{E_{\{1,\,2\}}}&=P_{M(E_{\{1,\,2\}})}f=P_{L^2(E_{\{1,\,2\}})}f-P_{M(K_{\{1\}})}f-P_{M(K_{\{2\}})}f-P_{M(\emptyset)}f\notag\\
    &=\E[f\mid \eta_1,\,\eta_2]-(\E[f\mid \eta_1]-\E[f])-(\E[f\mid \eta_2]-\E[f])-\E[f]\notag\\
    &=\E[f\mid \eta_1,\,\eta_2]-\E[f\mid \eta_1]-\E[f\mid \eta_2]+\E[f] .
    \label{eq:proj_2e}
\end{align}
Note that,  
\begin{align}
    \E[f\mid \eta_1,\,\eta_2]
    %&=\E[a_{12}a_{23}a_{31}+a_{21}a_{32}a_{13}\mid \eta_1,\,\eta_2]\notag\\
    %&=\E_{\eta_3}[\E[a_{12}a_{23}a_{31}+a_{21}a_{32}a_{13}\mid \eta_1,\,\eta_2,\,\eta_3]]\notag\\
    &=\E_{\eta_3}[W(\eta_1,\,\eta_2)W(\eta_2,\,\eta_3)W(\eta_3,\,\eta_1)+W(\eta_2,\,\eta_1)W(\eta_3,\,\eta_2)W(\eta_1,\,\eta_3)]\notag\\
    &=W(\eta_1,\,\eta_2)\int_0^1W(\eta_2,\,z)W(z,\,\eta_1)\mathrm{d}z+W(\eta_2,\,\eta_1)\int_0^1W(z,\,\eta_2)W(\eta_1,\,z)\mathrm{d}z\notag\\
    &=W(\eta_1,\,\eta_2)t_W( \lp, \eta_1, \eta_2 )+W(\eta_2,\,\eta_1)t_W( \lp, \eta_2, \eta_1 )  \nonumber \\ 
    & = 2\triangle_{W}(\eta_1,\,\eta_2)  ,  
    \label{eq:E_f3}
\end{align}
where $\triangle_{W}$ is defined in \eqref{eq:W_tr}. Now, plugging \eqref{eq:E_f3} and \eqref{eq:E_f} in \eqref{eq:proj_2e}, the first part of Lemma \ref{lem:proj} follows.

For the second part, by \eqref{pl1} and \eqref{pl2}, 
\begin{align}
    f_{K_{\{1,\,2\}}}&=P_{M(K_{\{1,\,2\}})}f=P_{L^2(K_{\{1,\,2\}})}f-P_{M(E_{\{1,\,2\}})}f-P_{M(K_{\{1\}})}f-P_{M(K_{\{2\}})}f-P_{M(\emptyset)}f\notag\\
    &=P_{L^2(K_{\{1,\,2\}})}f-P_{L^2(E_{\{1,\,2\}})}f  \nonumber \\ 
    & =\E[f\mid \eta_1,\,\eta_2,\,\vartheta_{12}]-\E[f\mid \eta_1,\,\eta_2].
    \label{eq:proj_2k}
\end{align}
Now, note that 
\begin{align}
    & \E[f\mid \eta_1,\,\eta_2,\,\vartheta_{12}]  \nonumber \\ 
    %\E[a_{12}a_{23}a_{31}+a_{21}a_{32}a_{13}\mid \eta_1,\,\eta_2,\,\vartheta_{12}]\notag\\
    %&=a_{12}\E_{\eta_3}[\E[a_{23}a_{31}\mid \eta_1,\,\eta_2,\,\eta_3,\,\vartheta_{12}]]+a_{21}\E_{\eta_3}[\E[a_{32}a_{13}\mid \eta_1,\,\eta_2,\,\eta_3,\,\vartheta_{12}]]\notag\\
    &= \bm 1\{\vartheta_{12}\le W(\eta_1,\,\eta_2)\}  \E_{\eta_3}[W(\eta_2,\,\eta_3)W(\eta_3,\,\eta_1)]+ \bm 1\{\vartheta_{21}\le W(\eta_2,\,\eta_1)\} \E_{\eta_3}[W(\eta_3,\,\eta_2)W(\eta_1,\,\eta_3)]\notag\\
    & = \bm 1\{\vartheta_{12}\le W(\eta_1,\,\eta_2)\} \int_0^1W(\eta_2,\,z)W(z,\,\eta_1)\mathrm{d}z + \bm 1\{\vartheta_{21}\le W(\eta_2,\,\eta_1)\} \int_0^1W(z,\,\eta_2)W(\eta_1,\,z)\mathrm{d}z\notag\\
    &=\bm 1\{\vartheta_{12}\le W(\eta_1,\,\eta_2)\} t_W( \lp, \eta_1, \eta_2 ) + \bm 1\{\vartheta_{21}\le W(\eta_2,\,\eta_1)\} t_W( \lp, \eta_2, \eta_1 ) . 
    \label{eq:E_f2}
\end{align}
Plugging \eqref{eq:E_f2} and \eqref{eq:E_f3} in \eqref{eq:proj_2k}, the second part of Lemma \ref{lem:proj} follows.
\end{proof}

If $f_{(2)}\neq0$, then $f$ has principal degree 2, and we can apply
\cite[Theorem 2]{janson1991asymptotic},
which shows that 
\begin{equation}
    \frac{N_{\triangle}(T_n) - 2\binom{n}{3} t(\cc, W) }{n^{2}}\dto \tau_W \cdot Z+\sum_{\lambda\in\Lambda}\lambda(Z_\lambda^2-1) , 
    \label{eq:regpf}
\end{equation}
where $Z, \{Z_\lambda\}_{\lambda\in\Lambda}$ are i.i.d. $N(0, 1)$, $\Lambda$ is the multiset of (non-zero) eigenvalues of a certain integral operator $T$, and 
\begin{align}
    \tau_W^2&=\frac{1}{2}\E\left[f_{K_{\{1,\,2\}}}^2\right] \nonumber \\ 
    & =\frac{1}{2}\E\left[ \left (\E[ f\mid \eta_1,\,\eta_2,\,\vartheta_{12} ] -\E[ f\mid \eta_1,\,\eta_2] ]\right)^2 \right] \tag*{ (by Lemma \ref{lem:proj}) } \nonumber \\
    &=\frac{1}{2}\E\left[\Var[\bm 1\{\vartheta_{12}\le W(\eta_1,\,\eta_2)\} t_W( \lp, \eta_1, \eta_2 )+\bm 1\{\vartheta_{21}\le W(\eta_2,\,\eta_1)\} t_W( \lp, \eta_2, \eta_1 )\mid \eta_1,\,\eta_2] \right ] \tag*{(by \eqref{eq:E_f2})} \nonumber  \\
    &=\frac{1}{2}\E \left [\Var[\bm 1\{\vartheta_{12}\le W(\eta_1,\,\eta_2)\} (t_W( \lp, \eta_1, \eta_2 )-t_W( \lp, \eta_2, \eta_1 ))+t_W( \lp, \eta_2, \eta_1 )\mid \eta_1,\,\eta_2] \right ] \nonumber \\
    &=\frac{1}{2}\E[W(\eta_1,\,\eta_2)W(\eta_2,\,\eta_1)(t_W( \lp, \eta_1, \eta_2 )-t_W( \lp, \eta_2, \eta_1 ))^2]  ,  
    \label{eq:tWpf}
\end{align} 
as in \eqref{eq:tau_W}. Finally, we compute the Hilbert--Schmidt operator $T$ as defined in
\cite[Theorem 2]{janson1991asymptotic}. Note in our case this operator is defined on the space $M_{K_{\{1\}}}$. 
Recall that $M_{K_{\{1\}}}\subset L^2(K_{\{1\}})$, where $L^2(K_{\{1\}})$ is the space of all
square integrable random variables of the form $g(\eta_1)$.
We may identify $L^2(K_{\{1\}})$ with $L^2([0, 1])$, and then 
\eqref{eq:LGH} yields the orthogonal decomposition
\begin{align}\label{eq:decomp}
    L^{2}([0,1])=M_{K_{\{1\}}} \bigoplus M_{\emptyset}, %\bigoplus M_{K_{\{1\}}}, 
\end{align}
where $M_{\emptyset}$ is the one-dimensional space of all constants. 
Hence, $M_{K_{\{1\}}}$ is identified with the subspace of $L^2([0, 1])$ orthogonal to
constants, that is, $M_{K_{\{1\}}}=\{g\in L^2([0, 1]):\int_0^1 g(x) \mathrm dx = 0\}$. Then, taking $g,h\in M_{K_{\{1\}}}\subset L^2([0, 1])$, the
definitions given in \cite[Theorem 2]{janson1991asymptotic} yield
\begin{align}\label{eq:Tgh}
\langle Tg,h\rangle &=
 \frac{1}{2}\mathbb{E}\left[fg(\eta_{1})h(\eta_{2})\right] . 
%\nonumber \\ &
%=\frac{1}{2(|V(H)|-2)!}\mathbb{E}\left[f_{K_{\{1\}}j}g(\eta_{1})h(U_{2})\right] 
\end{align}   
%Recall the operator $T_{\triangle_W}$ defined on $L^2([0, 1])$ by \eqref{eq:TW} and \eqref{eq:WH}. 

\begin{lem}\label{lemma:operatorW} 
If $W$ is $\triangle$-regular, then  the operator $T$ on $M_{K_{\{1\}}}$ defined in \eqref{eq:Tgh}  equals the operator $T_{\triangle_{W}}$ restricted to the space
$M_{K_{\{1\}}}$. Moreover, then the multiset of non-zero eigenvalues of\/ $T$ is equal to $\mathrm{Spec}^{-}(\triangle_{W})$. 
\end{lem}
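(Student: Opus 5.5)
We compute the bilinear form in \eqref{eq:Tgh} directly by conditioning. Take $g,h\in M_{K_{\{1\}}}$, so that $\int_0^1 g=\int_0^1 h=0$. Because $g(\eta_1)h(\eta_2)$ is $\sigma(\eta_1,\eta_2)$-measurable, the tower property together with \eqref{eq:E_f3} gives
\begin{align*}
\langle Tg,h\rangle=\tfrac12\E\bigl[\E[f\mid\eta_1,\eta_2]\,g(\eta_1)h(\eta_2)\bigr]=\E\bigl[\triangle_W(\eta_1,\eta_2)g(\eta_1)h(\eta_2)\bigr]=\langle T_{\triangle_W}g,h\rangle_{L^2[0,1]}.
\end{align*}
This identity requires no regularity assumption. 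We could also replace $f$ by $f_{E_{\{1,2\}}}$ using Lemma \ref{lem:proj}(1), since the other terms vanish against mean-zero $g,h$; the direct conditioning route is simpler.

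What remains is to show that $T_{\triangle_W}$ maps $M_{K_{\{1\}}}$ into itself, so that $T$ really is the restriction of $T_{\triangle_W}$ as an operator on that space. This is where $\triangle$-regularity enters. For $g\in M_{K_{\{1\}}}$, Fubini and the symmetry of $\triangle_W$ give
\begin{align*}
\int_0^1 (T_{\triangle_W}g)(x)\,\mathrm dx=\int_0^1 g(y)\int_0^1\triangle_W(x,y)\,\mathrm dx\,\mathrm dy=\int_0^1 g(y)\,t_W^\triangle(y)\,\mathrm dy
\end{align*}
by \eqref{eq:Wdegree}. Under $\triangle$-regularity this equals $t(\cc,W)\int_0^1 g=0$. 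Consequently, for $g,h\in M_{K_{\{1\}}}$ the identity above determines $Tg=T_{\triangle_W}g$ as elements of $M_{K_{\{1\}}}$.

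For the spectrum, note that $T_{\triangle_W}$ is a compact self-adjoint operator on $L^2[0,1]=M_{K_{\{1\}}}\oplus M_\emptyset$ by \eqref{eq:decomp}. Under $\triangle$-regularity the constant function $\bm 1$ is an eigenfunction with eigenvalue $t(\cc,W)$, by \eqref{eq:Wdegree}. Both summands are therefore invariant: $M_\emptyset$ is invariant because $\bm 1$ is an eigenfunction, and $M_{K_{\{1\}}}$ is invariant because it is the orthogonal complement of an invariant subspace under a self-adjoint operator (this is also the computation in the previous paragraph). The operator thus decomposes as $T\oplus t(\cc,W)\,\mathrm{Id}_{M_\emptyset}$. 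Taking an orthonormal eigenbasis of $T$ on $M_{K_{\{1\}}}$ and adjoining $\bm 1$ gives the eigenbasis of $T_{\triangle_W}$, with multiplicities preserved. The multiset of non-zero eigenvalues of $T$ is therefore $\Spec(\triangle_W)$ with one copy of $t(\cc,W)$ removed, which is $\Spec^-(\triangle_W)$. This uses $t(\cc,W)>0$, guaranteed by $\zeta(W)<1$, so that the removed eigenvalue is indeed non-zero and counted in $\Spec$.

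The main point needing care is matching the normalization in \cite[Theorem 2]{janson1991asymptotic}: the factor $\frac12$ in \eqref{eq:Tgh} must combine with the factor $2$ in $\E[f\mid\eta_1,\eta_2]=2\triangle_W$ so that no stray constant appears. Beyond that, the argument is bookkeeping with the invariant decomposition.
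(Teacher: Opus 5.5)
Your proposal is correct and follows essentially the same route as the paper. Like the paper, you replace $f$ by $\E[f\mid\eta_1,\eta_2]=2\triangle_W$ via \eqref{eq:E_f3}. You then get invariance of $M_{K_{\{1\}}}$ from $T_{\triangle_W}\bm 1=t(\cc,W)\,\bm 1$ together with self-adjointness, and split the spectrum over $M_{K_{\{1\}}}\oplus M_\emptyset$. Your explicit remark that $t(\cc,W)>0$ (from $\zeta(W)<1$) is needed for the removed eigenvalue to actually belong to $\Spec(\triangle_W)$ is a small point the paper leaves implicit.
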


\begin{proof}
%Finally, we compute the Hilbert Schmidt operator $T$ defined in  \cite[Theorem 2]{janson1991asymptotic}. The domain of $T$ in our case simplifies to $M_{K_{\{1\}}}.$ Further, for $\phi_1,\,\phi_2\in M_{K_{\{1\}}},$ the definition in Theorem 2 of \cite{janson1991asymptotic} simplifies to
Note that we can replace $f$ by $\E[ f \mid \eta_1,\eta_2]$ in \eqref{eq:Tgh}. Hence, 
\begin{align}  
    \langle Tg,\, h\rangle&= \frac{1}{2}\E[\E[ f\mid \eta_1,\,\eta_2] g (\eta_1) h(\eta_2)] \nonumber \\
    &=\int_{[0,\,1]^2}\left(\frac{1}{2}\E[f\mid \eta_1=x,\,\eta_2=y]\right) g (x) h (y)\mathrm{d}x\mathrm{d}y \nonumber \\
    &=\int_{[0,\,1]^2}\triangle_{W}(x,\,y) g (x) h (y)\mathrm{d}x\mathrm{d}y \tag*{ (by \eqref{eq:E_f3}) } \nonumber \\ 
    & = \langle T_{\triangle_W} g,h \rangle , 
    \label{eq:TWgh}  
\end{align}
for $g,h\in M_{K_{\{1\}}}$. 
%Since $\triangle_{W}$ defined on $[0,\,1]^2$ is symmetric, it makes $T$ a symmetric Hilbert Schmidt operator. This completes the proof of Theorem \ref{thm:reg}.
% 
Furthermore, since $W$ is $\triangle$-regular, recall from the discussion after \eqref{eq:Wdegree} that $t(\cc, W)$ is an eigenvalue of $T_{\triangle_W}$ with corresponding eigenfunction $\phi\equiv \bm{1}$. Hence, 
\begin{align}
  \label{TWH1}
T_{\triangle_W} \bm{1} =t(\cc, W)=t(\cc, W) \cdot \bm{1} .  
\end{align}
Hence, $T_{\triangle_W}$ maps the space $M_\emptyset$ of constant functions into
itself. By \eqref{eq:decomp}, $M_{K_{\{1\}}}$ is the  orthogonal complement of $M_\emptyset$, and thus, since $T_{\triangle_W}$ is a symmetric operator, 
$T_{\triangle_W}$ also maps $M_{K_{\{1\}}}$ into itself.
Hence, both $T$ and $T_{\triangle_W}$ map $M_{K_{\{1\}}}$ into itself, and thus
\eqref{eq:TWgh} shows that $T=T_{\triangle_W}$ on $M_{K_{\{1\}}}$. %Finally, recall that $\Lambda $ in \eqref{beet1} is the multiset of non-zero eigenvalues of $T$, which we just have shown equals the multiset of eigenvalues of $T_{\triangle_W}$ on $M_{K_{\{1\}}}$. 
Moreover, on $M_\emptyset$, 
$T_{\triangle_W}$ has the single eigenvalue $t(\cc, W)$ by \eqref{TWH1}. 
Hence, $\Spec(T) = \Spec^-(\triangle_W)$. 
\end{proof}

Combining \eqref{eq:regpf}, \eqref{eq:tWpf}, and Lemma \ref{lemma:operatorW} completes the proof of  \eqref{eq:reg}, when $W$ is $\triangle$-regular and $f_{(2)} \ne 0$. Further, if $f_{(2)}=0$ almost everywhere, then the conclusion of \cite[Theorem 2]{janson1991asymptotic} continues to hold (with a trivial limit 0), again as a consequence of  \cite[Lemma 2]{janson1991asymptotic} (more generally, \cite[Theorem 11.35]{SJIII}.)

\subsubsection{Degeneracy of the Distribution in \eqref{eq:reg}} 
\label{sec:uniformpf}

We begin by providing a characterization when the second projection $f_{(2)}$ is degenerate  (recall \eqref{eq:f2}).

\begin{lem} 
Suppose $W$ is $\triangle$-regular. Then $f_{(2)}=0$ almost surely if and only if $\tau_W^2 = 0$ and 
\begin{align}\label{eq:coregularW}
\triangle_{W}(x, y)=\frac{W(x, y) t_{W}(\lp, x, y) + W(y, x) t_{W}(\lp, y, x) }{2} \text{ is constant} ,  
\end{align}
almost everywhere on $[0,\,1]^2.$ 
\label{lem:f2}
\end{lem}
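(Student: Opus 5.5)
Since $f_{(2)}$ is a sum of terms lying in mutually orthogonal subspaces $M_{E_{\{s,t\}}}$ and $M_{K_{\{s,t\}}}$ (by \eqref{eq:LGH}), its variance splits as
\[
\Var[f_{(2)}]=\sum_{1\le s<t\le 3}\left(\E\big[f_{E_{\{s,t\}}}^2\big]+\E\big[f_{K_{\{s,t\}}}^2\big]\right)=3\,\E\big[f_{E_{\{1,2\}}}^2\big]+3\,\E\big[f_{K_{\{1,2\}}}^2\big],
\]
where the second equality uses the exchangeability of $(\eta_1,\eta_2,\eta_3)$ and the symmetry of $f$. Since $\E[f_{(2)}]=0$, we have $f_{(2)}=0$ almost surely if and only if both $f_{E_{\{1,2\}}}=0$ and $f_{K_{\{1,2\}}}=0$ almost surely. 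I will characterize these two conditions separately.

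\emph{The $K_{\{1,2\}}$ part.} By the computation in \eqref{eq:tWpf}, $\E[f_{K_{\{1,2\}}}^2]=2\tau_W^2$. Hence $f_{K_{\{1,2\}}}=0$ almost surely if and only if $\tau_W^2=0$. No further work is needed here beyond citing \eqref{eq:tWpf}.

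\emph{The $E_{\{1,2\}}$ part.} Under $\triangle$-regularity, $t_W^\triangle(\eta_1)=t_W^\triangle(\eta_2)=t(\cc,W)$ almost surely. Also $\E[f]=2t(\cc,W)$ by \eqref{eq:EN}. Substituting into Lemma~\ref{lem:proj}(1) gives
\[
f_{E_{\{1,2\}}}=2\triangle_W(\eta_1,\eta_2)-2t(\cc,W)
\]
almost surely. This vanishes almost surely if and only if $\triangle_W(x,y)=t(\cc,W)$ for almost every $(x,y)\in[0,1]^2$, since $(\eta_1,\eta_2)$ is uniform on $[0,1]^2$.

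For the converse direction, suppose $\triangle_W$ is almost everywhere equal to some constant $c$. Integrating \eqref{eq:Wdegree} over $x$ gives $c=\int_0^1 t_W^\triangle(x)\,\mathrm dx=t(\cc,W)$. So constancy of $\triangle_W$ is equivalent to $\triangle_W\equiv t(\cc,W)$ almost everywhere, which is exactly \eqref{eq:coregularW}.

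Combining the two parts proves the lemma. The only point needing care is the orthogonality step: we must ensure that the vanishing of a sum of projections forces each projection to vanish. This holds because each $f_H$ lies in a distinct mutually orthogonal $M_H$. Everything else is direct substitution into Lemma~\ref{lem:proj}.
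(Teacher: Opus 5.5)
Your proposal is correct and is essentially the paper's argument. The paper applies the law of total variance conditioning on $\eta_1,\eta_2,\eta_3$, which is exactly the orthogonal split of $f_{(2)}$ into $\sum_{s<t} f_{K_{\{s,t\}}}$ (contributing $6\tau_W^2$) and $\sum_{s<t} f_{E_{\{s,t\}}}$ (contributing $4\Var[\triangle_W(\eta_1,\eta_2)+\triangle_W(\eta_2,\eta_3)+\triangle_W(\eta_3,\eta_1)]$). Your use of the full orthogonality of all six pieces is slightly cleaner: it makes explicit why that second variance vanishes only when $\triangle_W$ is constant, a step the paper asserts without comment.
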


\begin{proof} From Lemma \ref{lem:proj}, 
\begin{align*}
    & f_{E_{\{1,\,2\}}}+f_{K_{\{1,\,2\}}}  \nonumber \\ 
    &=\bm 1\{\vartheta_{12}\le W(\eta_1,\,\eta_2)\}t_W( \lp, \eta_1, \eta_2 )+a_{21}t_W( \lp, \eta_2, \eta_1 )-2t_{W}^\triangle(\eta_1)-2t_{W}^\triangle(\eta_2)+\E(f)\\
    &=\bm 1\{\vartheta_{12}\le W(\eta_1,\,\eta_2)\}t_W( \lp, \eta_1, \eta_2 )+a_{21}t_W( \lp, \eta_2, \eta_1 )-\E[f].  \nonumber   
\end{align*}
since $\triangle$-regularity of $W$ implies that $2t_{W}^\triangle(\eta_1)=2t_{W}^\triangle(\eta_2)=2\E[t_{W}^\triangle(\eta_2)]=\E[f]$, almost everywhere. Thus, recalling \eqref{eq:f2}, 
\begin{align*}
& f_{(2)} \nonumber \\ 
& =\sum_{1\le s<t\le 3}\left(f_{E_{\{s,\,t\}}}+f_{K_{\{s,\,t\}}}\right) \nonumber \\ 
& =\sum_{1\le s<t\le 3}\left(\bm 1\{\vartheta_{st}\le W(\eta_s,\,\eta_t)\} t_W(\lp, \eta_s, \eta_t)+\bm 1\{\vartheta_{ts}\le W(\eta_t,\,\eta_s)\}  t_W(\lp, \eta_t, \eta_s)-\E[f]\right). 
\end{align*} 
Since $\E[ f_{(2)} ]  = 0$,  $ f_{(2)}$ is degenerate if and only if $\Var[f_{(2)}]=0.$ To compute the variance, note that 
\begin{equation}
    \Var(f_{(2)})=\E[\Var[f_{(2)}\mid \eta_1,\,\eta_2,\,\eta_3]]+ \Var[\E[f_{(2)}\mid \eta_1,\,\eta_2,\,\eta_3]].
    \label{eq:var_f2}
\end{equation}
%For $\Var[f_{(2)}]$ to be zero, both the terms in \eqref{eq:var_f2} must vanish. 
For the first term above, 
\begin{align}\label{eq:varf2Itemp}
    & \E[\Var[f_{(2)}\mid \eta_1,\,\eta_2,\,\eta_3]] \nonumber \\ 
    %&= \E\left[\Var\left(\sum_{1\le s<t\le 3}(\bm 1\{\vartheta_{st}\le W(\eta_s,\,\eta_t)\} t_W(\lp, \eta_s, \eta_t)+\bm 1\{\vartheta_{ts}\le W(\eta_t,\,\eta_s)\} t_W(\lp, \eta_t, \eta_s)-\E[f])\mid \eta_1,\,\eta_2,\,\eta_3\right)\right]\\
    =&\E\left[\Var\left[\sum_{1\le s<t\le 3}\bm 1\{\vartheta_{st}\le W(\eta_s,\,\eta_t)\}( t_W(\lp, \eta_s, \eta_t)- t_W(\lp, \eta_t, \eta_s))\mid \eta_1,\,\eta_2,\,\eta_3\right ] \right] 
    \end{align}
    Since the terms $\{\bm 1\{\vartheta_{st}\le W(\eta_s,\,\eta_t)\}:1\le s<t\le3\}$ are conditionally independent given $\eta_1,\eta_2,\eta_3,$ continuing from \eqref{eq:varf2Itemp}, we obtain
    \begin{align}\label{eq:varf2I}
    & \E[\Var[f_{(2)}\mid \eta_1,\,\eta_2,\,\eta_3]] \nonumber \\
    =&\sum_{1\le s<t\le 3}\E[W(\eta_s,\,\eta_t)W(\eta_t,\,\eta_s)( t_W(\lp, \eta_s, \eta_t)- t_W(\lp, \eta_t, \eta_s))^2]=6\tau_W^2,
\end{align}
where $\tau_W^2$ is defined in \eqref{eq:tau_W}. The second term in \eqref{eq:var_f2}, 
\begin{align}\label{eq:varf2II}
    \Var[\E[f_{(2)}\mid \eta_1,\,\eta_2,\,\eta_3]]
    %&\Var\left[\E\left(\sum_{1\le s<t\le 3}(\bm 1\{\vartheta_{st}\le W(\eta_s,\,\eta_t)\} t_W(\lp, \eta_s, \eta_t)+\bm 1\{\vartheta_{ts}\le W(\eta_t,\,\eta_s)\} t_W(\lp, \eta_t, \eta_s)-\E[f])\mid \eta_1,\,\eta_2,\,\eta_3\right)\right]\\
    =&\Var\left[ 2 \sum_{1\le s<t\le 3}\triangle_{W}(\eta_s,\,\eta_t)-3\E[f]\right] \nonumber \\ 
    & = 4\Var[\triangle_{W}(\eta_1,\,\eta_2)+\triangle_{W}(\eta_2,\,\eta_3)+\triangle_{W}(\eta_3,\,\eta_1)].
\end{align} 
Note that \eqref{eq:varf2I} is zero equivalent to $\tau_W^2=0$, and \eqref{eq:varf2II} is zero is equivalent to $\triangle_{W}$ being a constant function almost everywhere on $[0,\,1]^2$. Combining the above with \eqref{eq:var_f2} completes the proof of Lemma \ref{lem:f2}. 
\end{proof}

Recall (from the discussion following \eqref{eq:regularconstant}) that a tournamenton $W$ satisfying \eqref{eq:coregularW} is referred to as $\triangle$-coregular. Further, note that $\tau_W^2 = 0$ is equivalent to the $\triangle$-{\it cosymmetric} condition \eqref{eq:varianceregular}. 
Therefore, the non-degeneracy result about the limiting distribution in  Theorem \ref{thm:N} (2) can be equivalently restated as follows:

\begin{proposition} 
Suppose $W$ is $\triangle$-cosymmetric and $\triangle$-coregular. Then  $W$ is  weakly equivalent  to either the transitive tournamenton $W_{\mathrm{tr}}$ or the uniform tournamenton $W_{1/2}$. 
\label{prop:half}
\end{proposition}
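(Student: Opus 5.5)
The plan is to use Theorem~\ref{thm:W} first. Since $W$ is $\triangle$-coregular, Theorem~\ref{thm:W} says $W$ is weakly equivalent to the Condorcet tournamenton $W_p$ for some $p\in[\frac12,1]$. Both $\triangle$-cosymmetry and the value of $\tau_W^2$ in \eqref{eq:tau_W} are invariant under measure preserving transformations. The reason is that $d^{\uparrow}_W$ and $t_W(\lp,\cdot,\cdot)$ transform covariantly under $W\mapsto W^\psi$, so the integral defining $\tau_W^2$ is unchanged. The same holds for $\triangle$-coregularity. So it suffices to work directly with $W=W_p$ and ask for which $p\in[\frac12,1]$ the Condorcet tournamenton is $\triangle$-cosymmetric. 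A small point to handle here: weak equivalence goes through a common pullback $W^\psi=W_p^\phi$. I would check that cosymmetry passes both to and from pullbacks, so that we may compute on $W_p$ itself.

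Next, compute $\tau_{W_p}^2$. Example~\ref{example:randomW} gives $\tau_{W_p}^2=\frac{p(1-p)(1-2p)^2}{12}$. As a check via \eqref{eq:varianceregular}, the out-degree is $d^{\uparrow}_{W_p}(x)=p(1-x)+(1-p)x=p+(1-2p)x$. For $p\notin\{0,1,\frac12\}$, $W_p$ takes values in $(0,1)$ off the diagonal and $d^{\uparrow}$ is strictly monotone, so $d^{\uparrow}(x)\neq d^{\uparrow}(y)$ for $x\neq y$. Hence \eqref{eq:varianceregular} fails on a set of full measure and $\tau^2_{W_p}>0$. Therefore cosymmetry forces $p\in\{\frac12,1\}$, within the range $[\frac12,1]$ allowed by Theorem~\ref{thm:W}.

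Finally, identify the two cases. If $p=\frac12$, then $W_p\equiv\frac12$ off the diagonal, which is a null set, so $W$ is weakly equivalent to $W_{1/2}$. If $p=1$, then $W_1(x,y)=\bm 1\{x<y\}$ almost everywhere, which is $W_{\mathrm{tr}}$. For completeness I would also note the converse: both $W_{1/2}$ and $W_{\mathrm{tr}}$ are $\triangle$-cosymmetric, since one is degree-regular and the other is $\{0,1\}$-valued. Both are also $\triangle$-coregular, with $\triangle_W$ equal to $\frac18$ and to $0$ respectively. This matches Remark~\ref{remark:uniform}.

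The substantive input is entirely Theorem~\ref{thm:W}, whose proof is deferred in the paper. Given that theorem, the only step needing care is the invariance of cosymmetry under weak equivalence (and under changes on null sets). That argument is routine but should be stated explicitly.
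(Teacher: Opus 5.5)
Your proposal is correct and follows essentially the paper's route: apply Theorem~\ref{thm:W}, then use $\triangle$-cosymmetry together with the non-constant out-degree $p+(1-2p)x$ to rule out every $p$ except $\frac12$ and the transitive endpoint. The only difference is minor. The paper works directly with the pullback form $W=W_p^\psi$ produced in the proof of Theorem~\ref{thm:W} and computes $d_W^\uparrow(x)=p+(1-2p)\psi(x)$, whereas you transfer cosymmetry across weak equivalence first; both are valid.
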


\noindent{\it Proof}. Let $W_p$ be the Condorcet tournamenton, as in \eqref{eq:randomW}. Since $W$ is $\triangle$-coregular, by Theorem \ref{thm:W}, there exists a measure preserving transformation $\psi: [0, 1] \rightarrow [0, 1]$ such that 
\begin{align}\label{eq:Wtransform}
W (x, y) = W_p^\psi(x, y) = p \bm 1\{\psi(x) < \psi(y) \} + (1- p) \bm 1\{\psi(x) > \psi(y) \} , 
\end{align}
for almost every $x, y \in [0, 1]$ and for some $p \in [0, 1]$. Now, there are 2 possiblities: 

\begin{itemize} 

\item $p \in \{0, 1\}$: Then $W$  is weakly equivalent to $W_{\mathrm{tr}}$.   

\item $p \in (0, 1)$: Then the $\triangle$-cosymmetric condition implies (recall \eqref{eq:varianceregular}) that 
\begin{align}\label{eq:dWsymmetric}
d_W^\uparrow(x)=d_W^\uparrow(y) ,
\end{align} 
for almost every $x, y \in [0, 1]$. From \eqref{eq:Wtransform}, we have 
$d_W^\uparrow(x)=p+(1-2p) \psi(x)$. 
Hence, \eqref{eq:dWsymmetric} can hold only when $p=\frac12$. In this case, $W$ is weakly equivalent to the uniform tournamenton. 
\hfill $\Box$
%Then $d_W^\uparrow(x) = p + (1-2p) x$. Hence, the $\triangle$-cosymmetry condition (recall \eqref{{eq:varianceregular}}) implies that either one the followitournamenton
\end{itemize}

Note that Theorem \ref{thm:N} assumes $\zeta(W) < 1$, which rules the case of the transitive tournamenton (recall Remark \ref{remark:transitiveW}). The non-degeneracy of the limiting distribution in Theorem \ref{thm:N} (2) is then an immediate consequence of Proposition \ref{prop:half}.

\subsection{Proof of Theorem \ref{thm:N} (3)}

The proof in this case follows from the classical result of \cite{moran1947method}, which was based on the method of moments. Here, for the sake of completeness, we present an alternate proof using the generalized $U$-statistic framework. To this end, observe that $W(x,\,y)=\frac{1}{2}$ implies both $f_{(1)}=0$ (by \eqref{lvb2}) and $f_{(2)}=0$ (by Lemma \ref{lem:f2}). Thus,  the principal degree of $f$ is 3, and we have to compute the projections of $f$ on the graphs of $3$ vertices. Note that there are $4$ non-isomorphic graphs with $\{1, 2, 3\}$ as the vertex set: 
\begin{itemize}

\item the empty graph with no edge (denote by $E_{\{1,\,2,\,3\}}$), 

\item the graph containing the single $(1, 2)$ and an isolated vertex 3 (denoted by $E_{\{12,\,3\}}$), 

\item the graph containing the edges $\{(1,\,2) , (2,\,3) \}$ (denoted by $E_{\{12,\,23\}}$),  

\item the complete graph with all three edges $\{ (1,\,2), (2,\,3), (1,\,3)\}$ (denoted by $K_{\{1, 2, 3\}}$). 

\end{itemize}

\begin{lem}\label{lem:projectionW} Let $f$ be as in \eqref{eq:f}. Then the following hold: 
\begin{itemize} 
\item[$(1)$] $f_{E_{\{1,\,2,\,3\}}} = f_{E_{\{12,\,3\}}} = 0$.
\item[$(2)$]  $f_{E_{\{12,\,23\}}} = \frac{ \bm 1\{\vartheta_{12}\le W(\eta_1,\,\eta_2)\} \bm 1\{\vartheta_{23}\le W(\eta_2,\,\eta_3)\} + \bm 1\{\vartheta_{21}\le W(\eta_2,\,\eta_1)\} \bm 1\{\vartheta_{32}\le W(\eta_3,\,\eta_2)\} }{2} -\frac{1}{4}$. 
\item[$(3)$] $f_{K_{\{1, 2, 3\}}}= 0$.
\end{itemize}
\end{lem}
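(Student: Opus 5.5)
Throughout this subsection $W=\frac12$ almost everywhere. This is the standing assumption of Theorem~\ref{thm:N}~(3), under which Lemma~\ref{lem:projectionW} is used, and the lemma is false for general $W$. For instance, $f_{E_{\{1,2,3\}}}$ involves the non-constant product $W(\eta_1,\eta_2)W(\eta_2,\eta_3)W(\eta_3,\eta_1)$ in general.

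I keep the notation $a_{st}=\bm 1\{\vartheta_{st}\le W(\eta_s,\eta_t)\}$. Since $W=\frac12$, conditionally on any $\eta$'s each $a_{st}$ (for $s<t$) is $\mathrm{Ber}(\frac12)$, independent of all $\eta$'s and of the other $\vartheta$'s. In particular $\E[a_{st}\mid \mathcal F_G]=W(\eta_s,\eta_t)=\frac12$ whenever the edge $(s,t)\notin E(G)$.

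\textbf{Plan.} I compute every projection $f_H$, $H\subseteq K_3$, recursively from \eqref{pl1}--\eqref{pl2}, in the form
\[
f_H=\E[f\mid\mathcal F_H]-\sum_{H'\subsetneq H}f_{H'},
\]
exactly as in Lemmas~\ref{lem:eta} and~\ref{lem:proj}. The lower-order projections are already known:
\begin{itemize}
\item $f_{\emptyset}=\E f=2t(\cc,W)=\frac14$;
\item $f_{K_{\{s\}}}=0$, by Lemma~\ref{lem:eta}, since $t^\triangle_W\equiv\frac18$;
\item $f_{E_{\{s,t\}}}=f_{K_{\{s,t\}}}=0$, by Lemma~\ref{lem:proj}. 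Here $\triangle_W\equiv\frac18$, and $t_W(\lp,x,y)\equiv\frac14$ is symmetric, so $f_{K_{\{1,2\}}}=\frac14(a_{12}+a_{21})-\frac14=0$.
\end{itemize}
Consequently, for any $H$ on three vertices, $f_H=\E[f\mid\mathcal F_H]-\frac14-\sum f_{H'}$, where the sum runs only over proper three-vertex subgraphs $H'$.

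\textbf{Part (1).} Conditionally on $\eta_1,\eta_2,\eta_3$ the three edges are independent, so
\[
\E[f\mid\eta_1,\eta_2,\eta_3]=W_{12}W_{23}W_{31}+W_{21}W_{32}W_{13}=\tfrac14,
\]
which gives $f_{E_{\{1,2,3\}}}=0$. Adding $\vartheta_{12}$,
\[
\E[f\mid \eta,\vartheta_{12}]=a_{12}W_{23}W_{31}+a_{21}W_{32}W_{13}=\tfrac14(a_{12}+a_{21})=\tfrac14,
\]
so $f_{E_{\{12,3\}}}=0$, and by symmetry the same holds for the other one-edge graphs.

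\textbf{Part (2).} Next,
\[
\E[f\mid\eta,\vartheta_{12},\vartheta_{23}]=a_{12}a_{23}W_{31}+a_{21}a_{32}W_{13}=\tfrac12\,(a_{12}a_{23}+a_{21}a_{32}).
\]
Subtracting $\frac14$ and the (zero) projections onto $E_{\{1,2,3\}}$, $E_{\{12,3\}}$ and $E_{\{23,1\}}$ gives the formula in (2).

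\textbf{Part (3).} I expect this to be the only non-mechanical step. I use the decomposition $f=\sum_H f_H$ with $f_{\mathcal F_{K_3}}=f$:
\[
f_{K_{\{1,2,3\}}}=f-\tfrac14-\sum_{\text{three 2-stars}}f_{\text{2-star}}.
\]
The three 2-stars are centered at vertices $2$, $3$ and $1$. Their projections, obtained from (2) by relabeling, together contain each of the six products $a_{uv}a_{vw},\ a_{vw}a_{wu},\dots$ of Lemma~\ref{lem:N1} exactly once, each with weight $\frac12$. Hence
\[
\sum_{\text{2-stars}} f_{\text{2-star}}=\tfrac12\bigl(\text{sum of the six products}\bigr)-\tfrac34=Q(1,2,3)+\tfrac12-\tfrac34 .
\]
Lemma~\ref{lem:N1} identifies $Q(1,2,3)=a_{12}a_{23}a_{31}+a_{21}a_{32}a_{13}=f$. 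Therefore $f_{K_{\{1,2,3\}}}=f-\frac14-(f-\frac14)=0$.

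The only care needed is bookkeeping: every proper subgraph must be subtracted exactly once, and the cyclic relabelings of the 2-star must match the six terms of $Q$.
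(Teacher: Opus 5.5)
Your proof is correct and follows essentially the same route as the paper. Under $W\equiv\frac12$ you compute each projection recursively as a conditional expectation minus the lower-order (mostly vanishing) projections. For (3) you sum the three 2-star projections and identify the sum with $f-\frac14$ via Lemma~\ref{lem:N1}. In part (2) you explicitly subtract the zero projections onto $E_{\{1,2,3\}}$ and the two one-edge subgraphs. The paper's displayed subtraction omits these terms, harmlessly, so your bookkeeping is slightly more careful.
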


\begin{proof} 
Note that when $W\equiv\frac{1}{2},$ then by Lemma \ref{lem:eta} and Lemma \ref{lem:proj}, we have, $f_{K_{\{s\}}}=f_{E_{\{s,\,t\}}}=f_{K_{\{s,\,t\}}}=0$, almost surely,  for every $1\le s, t\le 3$. Hence, from \eqref{pl2}, 
\begin{align*}
    f_{E_{\{1,\,2,\,3\}}} =P_{M(E_{\{1,\,2,\,3\}})}f & =P_{L^2(E_{\{1,\,2,\,3\}})}f-P_{M(\emptyset)}f-\sum_{s=1}^3P_{M(K_{\{s\}})}f-\sum_{1\le s<t\le3}P_{M(E_{\{s,\,t\}})}f\\
    &=\E[f\mid \eta_1,\,\eta_2,\,\eta_3]-\E[f]=0.
\end{align*}
Moreover,
\begin{align*}
    f_{E_{\{12,\,3\}}}&=P_{M(E_{\{12,\,3\}})}f\\
    &=P_{L^2(E_{\{12,\,3\}})}f-P_{M(\emptyset)}f-\sum_{s=1}^3P_{M(K_{\{s\}})}f-\sum_{1\le s<t\le3}P_{M(E_{\{s,\,t\}})}f-P_{M(K_{\{1,\,2\}})}f\\
    &=\E[f\mid \eta_1,\,\eta_2,\,\eta_3,\,\vartheta_{12}]-\E[f] \\ 
    & =\frac{ \bm 1\{\vartheta_{12}\le W(\eta_1,\,\eta_2)\}}{4}+\frac{ \bm 1\{\vartheta_{21}\le W(\eta_2,\,\eta_1)\}}{4}-\frac{1}{4}=0.
\end{align*}
This completes the proof of (1). 

For (2), note that 
\begin{align*}
    & f_{E_{\{12,\,23\}}} \\ 
    &=P_{M(E_{\{12,\,23\}})}f\\
    &=P_{L^2(E_{\{12,\,23\}})}f-P_{M(\emptyset)}f-\sum_{s=1}^3P_{M(K_{\{s\}})}f-\sum_{1\le s<t\le3}P_{M(E_{\{s,\,t\}})}f-P_{M(K_{\{1,\,2\}})}f-P_{M(K_{\{2,\,3\}})}f\\
    &=\E[f\mid \eta_1,\,\eta_2,\,\eta_3,\,\vartheta_{12},\,\vartheta_{23}]-\E[f] \nonumber \\ 
    & = \frac{ \bm 1\{\vartheta_{12}\le W(\eta_1,\,\eta_2)\} \bm 1\{\vartheta_{23}\le W(\eta_2,\,\eta_3)\} + \bm 1\{\vartheta_{21}\le W(\eta_2,\,\eta_1)\} \bm 1\{\vartheta_{32}\le W(\eta_3,\,\eta_2)\} }{2} -\frac{1}{4} . 
\end{align*} 
This completes the proof of (2). 

Using the result (2), recalling \eqref{eq:Nuvw}, and applying Lemma \ref{lem:N1} gives, 
\begin{align}\label{eq:f1223}
    f_{E_{\{12,\,23\}}}+f_{E_{\{23,\,31\}}}+f_{E_{\{31,\,12\}}}&=f(\eta_1,\,\eta_2,\,\eta_3, \,\vartheta_{12},\,\vartheta_{23},\,\vartheta_{31})-\frac{1}{4}  .  
\end{align}
From this and using \eqref{pl2} it follows that $f_{K_{\{1,\,2,\,3\}}} = 0$. This proves (3). 
\end{proof}

From \eqref{eq:f1223} and recalling \eqref{eq:Nuvw} it follows that 
$$\sum_{1\le u<v< w\le n} \left( f_{E_{\{uv,\,vw\}}}+f_{E_{\{vw,\,wu\}}}+f_{E_{\{wu,\,uv\}}} \right) %=\sum_{1\le u<v< w\le n}Q(u,\,v,\,w)-\binom{n}{3}\frac{1}{4}
=N_{\triangle}(T_n)-\E[N_{\triangle}(T_n)].$$
%by Lemma \ref{lem:N1}. This means the only non-zero projection of $f$ has to be $f_{E_{\{12,\,23\}}}+f_{E_{\{23,\,31\}}}+f_{E_{\{31,\,12\}}}.$ And all other projections are zero including $f_{K_{\{1,\,2,\,3\}}}.$ 
Hence, $f$ has principal degree $d=3$ and the principal subgraphs in $\Gamma_3$ are $E_{\{12,\,23\}}, E_{\{23,\,31\}}, E_{\{31,\,12\}}$, all of which are isomorphic and connected. Therefore, applying \cite[Theorem 1]{janson1991asymptotic} implies, 
\begin{align*}
\frac{N_{\triangle}(T_n)-\frac{1}{4} \binom{n}{3} }{n^{\frac{3}{2}}}\dto N\left(0,\, \kappa^2\right) , 
\end{align*} 
where 
\begin{align*}
    \kappa^2  & =\frac{1}{6}\Var[f_{E_{\{12,\,23\}}}+f_{E_{\{23,\,31\}}}+f_{E_{\{31,\,12\}}}]  \\ 
    & =\frac{1}{2}\Var[f_{E_{\{12,\,23\}}}]\\
    %&=\frac{1}{2}\Var\left[\frac{a_{12}a_{23}}{2}+\frac{a_{21}a_{32}}{2}-\frac{1}{4}\right] \\ 
    %& =\frac{1}{8}\Var(a_{12}a_{23}+a_{21}a_{32}) \\ 
    & =\frac{1}{8}\left[\E[(\bm 1\{\vartheta_{12}\le W(\eta_1,\,\eta_2)\} \bm 1\{\vartheta_{23}\le W(\eta_2,\,\eta_3)\} + \bm 1\{\vartheta_{21}\le W(\eta_2,\,\eta_1)\} \bm 1\{\vartheta_{32}\le W(\eta_3,\,\eta_2)\})^2]-\frac{1}{4}\right]\\
    %&=\frac{1}{8}\left[\E[a_{12}a_{23}+a_{21}a_{32}]-\frac{1}{4}\right] 
    &=\frac{1}{32} , 
\end{align*}
where the second equality follows from the orthogonality of the projection terms. This completes the proof.

\section{Proof of Theorem \ref{thm:W} } 
\label{sec:Wrandompf}

For $x , y \in [0, 1]$, define $J(x, y):=W(x,y)-\frac12$.  Since $W(y, x) = 1- W(x, y)$, we have $J(y, x) = - J(x, y)$ is an antisymmetric function. Define the out-degree function of $J$ as: 
$$d_J^{\uparrow}(x):=\int_0^1 J(x, y) \mathrm{d} y =d_W^{\uparrow}(x)-\frac12.$$
Now, consider the Hilbert--Schmidt operator on $L^2([0,1])$, 
\begin{align}\label{eq:Tf}
(T_J[f])(x):=\int_0^1 J(x, y)f(y) \mathrm{d} y . 
\end{align}
%From \eqref{eq:Tf}, we have $ T_J \bm{1}=d_J^{\uparrow}$.  
For all $x\in[0,1]$, define $J_{x}:[0,1]\rightarrow[-\frac{1}{2}, \frac{1}{2}]$ as
\begin{align}\label{eq:defJx}
    J_{x}(y):=J(x, \, y).
\end{align}
We regard $J_x$ as an element of $L^{2}([0,1])$. Note that this means, in particular, that $J_x=J_y$ means $J(x,z)=J(y,z)$, for almost every\ $z$. Since $J(x,y)$ is measurable and bounded, it is well known that the map $x \rightarrow J_x$ is a measurable, and (Bochner) integrable, map $[0, 1] \to L^2([0, 1])$ (by \cite[Lemma III.11.16(b)]{dunford1988linear}).
The Lebesgue differentiation theorem holds for Bochner
integrable Banach space value functions (by \cite[Section 5.V]{bochner1933integration}) 
hence, almost every $x\in[0, 1]$ is a Lebesgue point of $x \rightarrow  J_x$. We will use $\langle \cdot, \cdot \rangle$ for the inner product on $L^2([0, 1])$. We begin with expressing $\triangle_W$ in the above notation. 

\begin{lem} Let $\triangle_W$ be as defined in \eqref{eq:W_tr}. Then the following hold: 
\begin{equation}
    \triangle_W(x,y) = \frac{1}{8} - \frac{1}{2} \langle J_x, J_y\rangle + \frac12J(x, y) (d_J^{\uparrow}(y)-d_J^{\uparrow}(x) )  , 
    \label{eq:DeltaExpansion}
\end{equation}
where $\langle J_x, J_y\rangle   =\int_0^1 J(x,z)J(y,z) \mathrm{d} z$. 
\end{lem}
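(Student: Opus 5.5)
The identity is a direct algebraic expansion: substitute $W=\tfrac12+J$ into the representation \eqref{eq:Wp} of $\triangle_W$ and use the antisymmetry $J(y,x)=-J(x,y)$ throughout. No earlier results beyond the definitions and \eqref{eq:Wp} are needed.

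First I compute the 2-path density $t_W(\lp,x,y)=\int_0^1 W(y,z)W(z,x)\,\mathrm dz$. Writing $W(y,z)=\tfrac12+J(y,z)$ and $W(z,x)=\tfrac12-J(x,z)$, the integrand expands to
$$\tfrac14+\tfrac12 J(y,z)-\tfrac12 J(x,z)-J(y,z)J(x,z).$$
Integrating in $z$ gives
$$t_W(\lp,x,y)=\tfrac14+\tfrac12\bigl(d_J^\uparrow(y)-d_J^\uparrow(x)\bigr)-\langle J_x,J_y\rangle.$$
Swapping $x$ and $y$, and noting that $\langle J_x,J_y\rangle$ is symmetric, gives
$$t_W(\lp,y,x)=\tfrac14-\tfrac12\bigl(d_J^\uparrow(y)-d_J^\uparrow(x)\bigr)-\langle J_x,J_y\rangle.$$
As a check, the difference of these two equals $d_W^\uparrow(y)-d_W^\uparrow(x)$, in agreement with \eqref{eq:tWrplp}.

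Next I abbreviate $D:=d_J^\uparrow(y)-d_J^\uparrow(x)$ and $S:=\tfrac14-\langle J_x,J_y\rangle$, so that $t_W(\lp,x,y)=S+\tfrac12 D$ and $t_W(\lp,y,x)=S-\tfrac12 D$. Substituting into \eqref{eq:Wp} with $W(x,y)=\tfrac12+J(x,y)$ and $W(y,x)=\tfrac12-J(x,y)$ gives
$$2\triangle_W(x,y)=\bigl(\tfrac12+J(x,y)\bigr)\bigl(S+\tfrac12 D\bigr)+\bigl(\tfrac12-J(x,y)\bigr)\bigl(S-\tfrac12 D\bigr).$$
In this sum the terms $\tfrac14 D$ and $J(x,y)S$ cancel, leaving
$$2\triangle_W(x,y)=S+J(x,y)D.$$
Dividing by $2$ yields $\triangle_W(x,y)=\tfrac18-\tfrac12\langle J_x,J_y\rangle+\tfrac12 J(x,y)D$, which is exactly \eqref{eq:DeltaExpansion}.

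There is no real obstacle here. The only care needed is with the signs from antisymmetry, and with the fact that the identity holds for every $(x,y)$, because all the integrals are of bounded measurable functions.
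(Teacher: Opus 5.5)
Your proof is correct and follows essentially the same route as the paper. The paper also substitutes $W=\tfrac12+J$ to get $t_W(\lp,x,y)=\tfrac14+\tfrac12\bigl(d_J^{\uparrow}(y)-d_J^{\uparrow}(x)\bigr)-\langle J_x,J_y\rangle$ and its swap, then plugs both into \eqref{eq:Wp}. Your explicit cancellation via $S$ and $D$ just spells out the final step that the paper leaves implicit.
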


\begin{proof} We first compute $t_W(\lp, x,y)$ by substituting $W(y,z)=\frac12+J(y,z)$ and $ W(z,x)=\frac12+J(z,x)=\frac12-J(x,z)$ in \eqref{eq:rpxy}: 
\begin{align*}
    t_W(\lp, x,y)=& \int_0^1
    \left(\frac12+J(y,z)\right)
    \left(\frac12-J(x,z)\right) \mathrm{d} z \\
    &=\frac14
    +\frac12\int_0^1 J(y,z) \mathrm{d} z
    -\frac12\int_0^1 J(x,z) \mathrm{d} z
    -\int_0^1 J(y,z)J(x,z) \mathrm{d} z \\
    &=\frac14+\frac12 ( d_J^{\uparrow}(y)-d_J^{\uparrow}(x) ) - \langle J_x, J_y\rangle .
\end{align*}
Similarly, $t_W(\lp, y,x)=\frac14+\frac12\{d_J^{\uparrow}(x)-d_J^{\uparrow}(y)\} - \langle  J_x, J_y\rangle$. Combining the above with \eqref{eq:Wp} we obtain the result in \eqref{eq:DeltaExpansion}.  
\end{proof}

Now, since $W$ is $\triangle$-coregular, then there exists a constant $c \in [0, 1]$ such that
$\triangle_W(x,y)=c$, for almost every $(x, y) \in [0, 1]^2$. Therefore, by \eqref{eq:DeltaExpansion},
\begin{equation}
    - \langle J_x, J_y\rangle + J(x, y) (d_J^{\uparrow}(y)-d_J^{\uparrow}(x)) = 2c-\frac14 .
    \label{eq:basicEquation}
\end{equation}
Integrating the first term in \eqref{eq:basicEquation} over $[0,1]^2$ gives, 
\begin{align}
    \int_{[0,1]^2} \langle J_x, J_y\rangle ~ \mathrm{d} x \mathrm{d} y
    &=\int_{[0,1]^2}\int_0^1 J(x,z)J(y,z) \mathrm{d} z \mathrm{d} x \mathrm{d} y \nonumber \\
    &=\int_0^1
      \left(\int_0^1 J(x,z) \mathrm{d} x\right)
      \left(\int_0^1 J(y,z) \mathrm{d} y\right)
       \mathrm{d} z \nonumber \\ 
      & = \int_0^1 d_J^{\uparrow}(z)^2 \mathrm{d} z =  \| d_J^{\uparrow} \|_2^2 , 
       \label{eq:intG}
\end{align}
where the last step uses, since $J$ is antisymmetric, $\int_0^1 J(x,z) \mathrm{d} x=-\int_0^1 J(z,x) \mathrm{d} x=-d_J^{\uparrow}(z)$ and similarly $\int_0^1 J(y,z) \mathrm{d} y=-d_J^{\uparrow}(z)$. Also, integrating the second term in \eqref{eq:basicEquation} over $[0,1]^2$ gives, 
\begin{align}
    \int_{[0,1]^2}J(x, y) (d_J^{\uparrow}(y)-d_J^{\uparrow}(x)) \mathrm{d} x \mathrm{d} y  &=\int_0^1 d_J^{\uparrow}(y)\left(\int_0^1J(x, y) \mathrm{d} x\right) \mathrm{d} y
      -\int_0^1 d_J^{\uparrow}(x)\left(\int_0^1J(x, y) \mathrm{d} y\right) \mathrm{d} x \notag \\
    &=-\int_0^1 d_J^{\uparrow}(y)^2 \mathrm{d} y-\int_0^1 d_J^{\uparrow}(x)^2 \mathrm{d} x  \nonumber \\ 
    & =-2  \| d_J^{\uparrow} \|_2^2 .  
    \label{eq:intFDiff}
\end{align}  
Combining \eqref{eq:basicEquation}, \eqref{eq:intG}, and \eqref{eq:intFDiff}, gives $2c -\frac{1}{4} =  - 3 \| d_J^{\uparrow} \|_2^2 $ and, hence, 
\begin{equation}
     \langle J_x, J_y\rangle = J(x, y) (d_J^{\uparrow}(y)-d_J^{\uparrow}(x)) + 3 \| d_J^{\uparrow} \|_2^2  ,
    \label{eq:FxyJ}
\end{equation} 
for almost every $(x, y) \in [0, 1]^2$. 
%Denote $\| d_J^{\uparrow} \|_2^2  = \int_0^1d_J^{\uparrow}(x)^2 \mathrm{d} x$. 
Using this identity, in the next lemma we show that the $L^2$ norm of the function $J_x$ is constant for almost every $x \in [0, 1]$. Towards this, denote by $S_{\eqref{eq:FxyJ}}$ the set of $(x, y) \in [0, 1]$ for which \eqref{eq:FxyJ} holds.  Further, $\bar S_{\eqref{eq:FxyJ}} = \{x \in [0, 1]: (x, y) \in S_{\eqref{eq:FxyJ}} \text{ for almost every } y \in [0, 1]\}$. Since $S_{\eqref{eq:FxyJ}}$ has full measure in $[0, 1]$, by Fubini's theorem $\bar S_{\eqref{eq:FxyJ}}$ has full measure in $[0, 1]$. 

\begin{lem} \label{lm:J}
For almost every $x \in [0, 1]$, 
\begin{equation}
    \|J_x\|_2^2=  3  \| d_J^{\uparrow} \|_2^2  . 
    \label{eq:J}
\end{equation} 
\end{lem}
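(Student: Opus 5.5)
The plan is to set $y=x$ in \eqref{eq:FxyJ}. On the diagonal the second term vanishes, since $d_J^\uparrow(y)-d_J^\uparrow(x)=0$ (and also $J(x,x)=0$ by antisymmetry). What remains is $\langle J_x,J_x\rangle=3\|d_J^\uparrow\|_2^2$. The difficulty is that \eqref{eq:FxyJ} only holds for almost every $(x,y)$, and the diagonal has measure zero. So we cannot substitute $y=x$ directly, and instead pass to the diagonal by a Lebesgue-point argument. This is why the discussion preceding the lemma introduced the Bochner-integrable map $x\mapsto J_x$ and its Lebesgue points.

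Concretely, fix $x\in\bar S_{\eqref{eq:FxyJ}}$ that is also a Lebesgue point of $x\mapsto J_x$ in $L^2([0,1])$, and a Lebesgue point of $d_J^\uparrow$. Almost every $x$ qualifies. For small $h>0$, average \eqref{eq:FxyJ} over $y\in I_h:=[x-h,x+h]\cap[0,1]$; this is legitimate because the identity holds for a.e.\ $y$. The left side becomes $\langle J_x,\bar J_h\rangle$ with $\bar J_h:=\frac{1}{|I_h|}\int_{I_h}J_y\,\mathrm dy$. The Lebesgue point property gives $\frac{1}{|I_h|}\int_{I_h}\|J_y-J_x\|_2\,\mathrm dy\to0$, hence $\|\bar J_h-J_x\|_2\to0$, so the left side tends to $\|J_x\|_2^2$.

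On the right side, the constant $3\|d_J^\uparrow\|_2^2$ is unaffected by averaging. The remaining term is bounded by
\[
\frac{1}{|I_h|}\int_{I_h}|J(x,y)|\,|d_J^\uparrow(y)-d_J^\uparrow(x)|\,\mathrm dy\le\frac12\cdot\frac{1}{|I_h|}\int_{I_h}|d_J^\uparrow(y)-d_J^\uparrow(x)|\,\mathrm dy .
\]
This tends to $0$ because $x$ is a Lebesgue point of the bounded function $d_J^\uparrow$. Alternatively, $|d_J^\uparrow(y)-d_J^\uparrow(x)|=|\langle J_y-J_x,\bm 1\rangle|\le\|J_y-J_x\|_2$, so the Lebesgue point of $x\mapsto J_x$ alone suffices. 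Letting $h\to0$ yields \eqref{eq:J}.

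The main obstacle is the passage from an almost-everywhere identity on $[0,1]^2$ to the diagonal. The averaging step handles it, provided we check that the averaged identity is valid for each fixed good $x$. That validity follows from the choice $x\in\bar S_{\eqref{eq:FxyJ}}$ together with Fubini. We also need the weak-type convergence $\langle J_x,\bar J_h\rangle\to\|J_x\|_2^2$, which follows from Cauchy--Schwarz combined with the Bochner-valued Lebesgue differentiation theorem cited earlier.
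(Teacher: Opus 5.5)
Your proposal is correct and follows essentially the same route as the paper: fix $x\in\bar S_{\eqref{eq:FxyJ}}$ that is a Lebesgue point of both $x\mapsto J_x$ and $d_J^{\uparrow}$, average \eqref{eq:FxyJ} over a shrinking interval in $y$, note that the first term tends to $\|J_x\|_2^2$, and use $|J|\le\frac12$ to show the cross term vanishes. Your extra remark that $|d_J^{\uparrow}(y)-d_J^{\uparrow}(x)|=|\langle J_y-J_x,\bm{1}\rangle|\le\|J_y-J_x\|_2$ is a valid small simplification, since it makes the separate Lebesgue-point assumption on $d_J^{\uparrow}$ unnecessary.
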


\begin{proof}  
Let $x \in \bar S_{\eqref{eq:FxyJ}}$ be a Lebesgue point of $x \rightarrow d_J^{\uparrow}(x)$ and also a Lebesgue point of the map $x \rightarrow  J_x$. Fix $\varepsilon > 0$ and denote by $B_\varepsilon = [x-\varepsilon, x + \varepsilon] \cap [0, 1]$. Denote by $|B_\varepsilon|$ the Lebesgue measure of the set $B_\varepsilon$. Then 
$$\frac{1}{|B_\varepsilon|} \int_{B_\varepsilon} \left \langle J_x, J_y \right \rangle \mathrm d y =   \left \langle J_x, \frac{1}{|B_\varepsilon|} \int_{B_\varepsilon} J_y \right \rangle \mathrm d y \rightarrow \|J_x\|_2^2 , $$  
as $\varepsilon \rightarrow 0$. Further, since $|J(x, y)|\le \frac{1}{2}$,
$$ \frac{1}{|B_\varepsilon|} \left| \int_{B_\varepsilon} J(x,y) (d_J^{\uparrow}(y)-d_J^{\uparrow}(x) )  \mathrm{d} y \right|  \le \frac{1}{ |B_\varepsilon| } \int_{B_\varepsilon} \left| d_J^{\uparrow}(y)-d_J^{\uparrow}(x) \right| \mathrm{d} y  \rightarrow 0 , $$
as $\varepsilon \rightarrow 0$. Combining the above with \eqref{eq:FxyJ} shows that \eqref{eq:J} holds for almost every $x \in [0, 1]$. 
\end{proof}

Note that if $\| d_J^{\uparrow} \|_2^2 = 0$, then Lemma \ref{lm:J} implies, $J = 0$ almost everywhere, hence $W\equiv \frac{1}{2}$ (which is the Condorcet tournamenton with $p = \frac{1}{2}$). Henceforth, we will assume $\| d_J^{\uparrow} \|_2^2 >0$. We now compute the moments $d_J^{\uparrow}$ and identify its distribution.

\begin{lem}\label{lm:TJr} 
Let $T_J$ be as defined in \eqref{eq:Tf} and denote $a^2 := 3  \| d_J^{\uparrow} \|_2^2 > 0 $. Fix an integer $r \geq 0$. Then, for almost every $x \in [0, 1]$, 
\begin{equation}
    T_J [(d_J^{\uparrow})^r] (x)  = 
    \begin{cases}
        \dfrac{ d_J^{\uparrow}(x)^{r+1}}{r+1} & \text{ when } r \text{ is even} , \\ 
        \dfrac{ d_J^{\uparrow}(x)^{r+1}- a^{r+1} }{r+1} & \text{ when } r \text{ is odd} . 
    \end{cases}
    \label{eq:TJr}
\end{equation}
Moreover, 
\begin{equation}
\int_0^1 d_J^{\uparrow}(x)^r \mathrm{d} x = 
\begin{cases}
        \frac{a^r}{r+1} & \text{ when } r \text{ is even},\\ 
        0 & \text{ when } r \text{ is odd} . 
    \end{cases} 
    \label{eq:momentFormula}
\end{equation}
Consequently, if $\eta \sim\Unif([0,1])$, then $d_J^{\uparrow}(\eta)\sim \Unif([-a,a])$.   
\end{lem}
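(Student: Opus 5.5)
The proof rests on one operator identity obtained from \eqref{eq:FxyJ}, which we then apply inductively to $g=(d_J^{\uparrow})^r$. Write $d:=d_J^{\uparrow}$, so that $a^2=3\|d\|_2^2$ and \eqref{eq:FxyJ} reads $\langle J_x,J_y\rangle = J(x,y)(d(y)-d(x))+a^2$ for a.e. $(x,y)$.

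\emph{Step 1: the operator identity.} Fix a bounded measurable $g$, multiply \eqref{eq:FxyJ} by $g(y)$ and integrate in $y$. Since $J$ is bounded, Fubini applies. By antisymmetry, $\int_0^1 J(y,z)g(y)\,\mathrm dy=-(T_Jg)(z)$, so the left side becomes $-\int_0^1 J(x,z)(T_Jg)(z)\,\mathrm dz=-(T_J^2 g)(x)$. The right side is $T_J[dg](x)-d(x)\,(T_Jg)(x)+a^2\int_0^1 g$. Hence, for a.e. $x$,
\begin{equation*}
T_J[dg]=d\cdot T_Jg-T_J[T_Jg]-a^2\textstyle\int_0^1 g .
\end{equation*}
We also record two facts, both from antisymmetry:
\begin{equation*}
\textstyle\int_0^1 T_Jg=-\int_0^1 d\,g, \qquad T_J\bm 1=d .
\end{equation*}

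\emph{Step 2: induction on $r$.} Set $h_r:=T_J[d^r]$ and $m_r:=\int_0^1 d^r$. We prove that $h_r=(d^{r+1}-c_r)/(r+1)$ for constants $c_r$. The base case is $h_0=d$, so $c_0=0$. For the inductive step, apply Step 1 with $g=d^r$:
\begin{equation*}
h_{r+1}=d\,h_r-T_Jh_r-a^2m_r .
\end{equation*}
By linearity and the inductive hypothesis, $T_Jh_r=(h_{r+1}-c_r d)/(r+1)$. Substituting, the $c_r d$ terms cancel, and solving gives
\begin{equation*}
h_{r+1}=\frac{d^{r+2}-(r+1)a^2m_r}{r+2},
\end{equation*}
that is, $c_{r+1}=(r+1)a^2m_r$.

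\emph{Step 3: the moment recursion.} Integrate $h_r$ in two ways. Step 1 gives $\int h_r=-m_{r+1}$, while the formula from Step 2 gives $\int h_r=(m_{r+1}-c_r)/(r+1)$. Equating yields $m_{r+1}=c_r/(r+2)=r\,a^2m_{r-1}/(r+2)$ for $r\ge1$. The initial values are $m_0=1$ and $m_1=\int\!\!\int J=0$; note also that $m_2=a^2/3$, consistent with the definition of $a$. The recursion then gives $m_r=a^r/(r+1)$ for even $r$ and $m_r=0$ for odd $r$, which is \eqref{eq:momentFormula}. Plugging back, $c_r=r a^2 m_{r-1}$, which equals $a^{r+1}$ for odd $r$ and $0$ for even $r$. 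This is exactly \eqref{eq:TJr}.

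\emph{Step 4: the distribution.} The moments of $d(\eta)$ coincide with those of $\Unif([-a,a])$. Since $|d|\le\frac12$ is bounded, the distribution is determined by its moments, so $d(\eta)\sim\Unif([-a,a])$.

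The only delicate step is Step 1. One must justify that the a.e. identity \eqref{eq:FxyJ} may be integrated against $g(y)$ and that each resulting identity holds a.e. in $x$. Boundedness of $J$ and of $d^r$ together with Fubini handles this. Countably many null sets are discarded, one for each $r$, which is harmless.
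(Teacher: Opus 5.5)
Your proof is correct and follows essentially the same route as the paper: integrate \eqref{eq:FxyJ} against powers of $d_J^{\uparrow}$ to obtain the recurrence $h_{r+1}=d_J^{\uparrow}h_r-T_Jh_r-a^2m_r$, use $\int_0^1 h_r=-m_{r+1}$ (from antisymmetry), and conclude by moment determinacy for bounded random variables. The only difference is organizational. You carry a generic constant $c_r$ and solve a separate moment recursion, whereas the paper proves \eqref{eq:TJr} and \eqref{eq:momentFormula} jointly by induction with a case split on the parity of $r$.
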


\begin{proof} For $r \geq 0$, denote 
\begin{align}\label{eq:h}
h_r(x) :=  T_J [(d_J^{\uparrow})^{r}](x) = \int_0^1 J(x, y) d_J^{\uparrow}(y)^{r} \mathrm d y 
\end{align}
and $\mu_r:= \int_0^1 d_J^{\uparrow}(y)^{r} \mathrm{d} y$. Fix $x \in \bar S_{\eqref{eq:FxyJ}}$ and integrating both sides of \eqref{eq:FxyJ} with respect to the function $(d_J^{\uparrow})^{r-1}$ we get, 
\begin{align}\label{eq:dhxy}
    & \int_0^1 \langle J_x, J_y\rangle  d_J^{\uparrow}(y)^{r-1} \mathrm d y  - \int_0^1 J(x, y) (d_J^{\uparrow}(y)-d_J^{\uparrow}(x)) d_J^{\uparrow}(y)^{r-1} \mathrm d y - a^2\mu_{r-1} = 0. 
\end{align} 
Observe that 
\begin{align*}
      \int_0^1 \langle J_x, J_y\rangle  d_J^{\uparrow}(y)^{r-1} \mathrm d y   & =   \int_0^1 \left( \int_0^1 J(x,z)J(y, z)  \mathrm{d} z \right) d_J^{\uparrow}(y)^{r-1} \mathrm d y  \nonumber \\ 
    & = -  \int_0^1 \left( \int_0^1 J(x,z)J(z, y)  \mathrm{d} z \right) d_J^{\uparrow}(y)^{r-1} \mathrm d y \nonumber \\ 
    & =  - \int_0^1 J(x,z) \left( \int_0^1 J(z, y) d_J^{\uparrow}(y)^{r-1} \mathrm d y \right)   \mathrm{d} z  \nonumber \\ 
    & = -  \int_0^1 J(x,z) h_{r-1} (z) \mathrm{d} z = - T_J [h_{r-1}] (x) . 
\end{align*}  
Hence, \eqref{eq:dhxy} can be rewritten as: 
\begin{equation}
T_J [h_{r-1}] (x) + h_r (x)   -  d_J^{\uparrow} (x) h_{r-1} (x)  + a^2 \mu_{r-1} = 0 , 
    \label{eq:recurrence}
\end{equation}
for $x \in \bar S_{\eqref{eq:FxyJ}}$. 

Now, we proceed to prove \eqref{eq:TJr} and \eqref{eq:momentFormula} simultaneously by induction on $r$. 
Note that  $T_J [\bm 1] (x) = d_J^{\uparrow}(x)$. Hence, \eqref{eq:TJr} and \eqref{eq:momentFormula} hold for $r=0$. Fix $r \geq 1$, and assume that \eqref{eq:TJr} and \eqref{eq:momentFormula} hold for all non-negative integers up to $r-1$. Then we consider the following 2 cases for \eqref{eq:TJr}:

\begin{itemize} 

\item {\it $r$ is even}:  Then $r-1$ is odd and by the induction hypothesis, $h_{r-1}(x)=\frac{1}{r} (d_J^{\uparrow}(x)^r-a^r)$ almost everywhere. 
Therefore, 
$$T_J[h_{r-1}](x) = \frac{1}{r}  \int_0^1 J(x, y) (d_J^{\uparrow}(y)^r-a^r) \mathrm d y =  \frac{1}{r} \left( h_r(x) -a^r d_J^{\uparrow}(x) \right) , $$ 
almost everywhere. Further, since $r-1$ is odd, by the induction hypothesis, $\mu_{r-1}=0$. Plugging these into \eqref{eq:recurrence} gives, 
\begin{align*}
   0  & =\frac{1}{r}\left( h_r(x) -a^r d_J^{\uparrow}(x) \right)+h_r(x) - \frac{1}{r} \left( d_J^{\uparrow}(x)^{r+1}-a^r d_J^{\uparrow}(x)  \right) \nonumber \\ 
    & =\left(1+\frac1 r\right)h_r(x) -\frac{1}{r} d_J^{\uparrow}(x)^{r+1}(x) .
\end{align*}
Hence, $h_r(x) =\frac{1}{r+1} d_J^{\uparrow}(x)^{r+1}$, almost everywhere, as required. 

\item {\it $r$ is odd}:  Then $r-1$ is even and by the induction hypothesis, $h_{r-1}(x) =\frac{1}{r} d_J^{\uparrow}(x)^r$, almost everywhere. Therefore, 
$$T_J[h_{r-1}](x) = \frac{1}{r} \int_0^1 J(x, y) d_J^{\uparrow}(y)^r \mathrm d y = \frac{1}{r} h_r (x) , $$
almost everywhere. Also, since $r-1$ is even, induction hypothesis gives, $\mu_{r-1}=\frac{1}{r} a^{r-1}$. Plugging this in \eqref{eq:recurrence},
\begin{align*}
    0  &=\left(1+\frac1 r\right)h_r(x) -\frac{1}{r} \left( d_J^{\uparrow}(x)^{r+1}-a^{r+1} \right) .
\end{align*}
Thus,  $h_r(x) =\frac{1}{r+1} (d_J^{\uparrow}(x)^{r+1}-a^{r+1})$, almost everywhere, as required. 
\end{itemize}

Next, from \eqref{eq:h},  
\begin{equation}
    \int_0^1 h_r(x) \mathrm{d} x = \int_0^1 \int_0^1 J(x, y) d_J^{\uparrow}(y)^{r} \mathrm d y \mathrm d x = - \int_0^1d_J^{\uparrow}(y)^{r+1} \mathrm{d} y  =-\mu_{r+1}.
    \label{eq:intSk}
\end{equation}
Now, consider the following 2 cases for \eqref{eq:momentFormula}: 

\begin{itemize}

\item {\it $r$ is even}: Then \eqref{eq:TJr} and \eqref{eq:intSk} imply that $\frac{1}{r+1} \mu_{r+1} =-\mu_{r+1}$, that is, $\mu_{r+1}=0$. 

\item {\it $r$ is odd}: Then \eqref{eq:TJr} and \eqref{eq:intSk} give $\frac{1}{r+1} ( \mu_{r+1}-a^{r+1} )=-\mu_{r+1}$, and therefore $\mu_{r+1}=\frac{1}{r+2} a^{r+1} $. 
\end{itemize}

To complete the proof, observe that the moment sequence \eqref{eq:momentFormula} is exactly the moment sequence for the uniform distribution on $[-a,a]$. Since $d_J^{\uparrow}$ is bounded, the moments determine the distribution. Therefore,
$ d_J^{\uparrow}(\eta)\sim \Unif([-a,a])$, where $\eta\sim\Unif([0,1])$. 
\end{proof}

We now identify the function $J$ and, consequently, the tournamenton $W$. 

\begin{lem}\label{lm:q} Suppose $a> 0$. Then for almost every $(x, y) \in [0, 1]$, 
\begin{equation}
    J(x, y)=a\,\operatorname{sgn}(d_J^{\uparrow}(x)-d_J^{\uparrow}(y)) . 
    \label{eq:FIdentified}
\end{equation}
\end{lem}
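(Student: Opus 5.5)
The plan is to show that for almost every $x$ the function $J_x$ coincides in $L^2([0,1])$ with $y\mapsto a\,\operatorname{sgn}(d_J^{\uparrow}(x)-d_J^{\uparrow}(y))$, by expanding the squared $L^2$ distance between the two. By Lemma~\ref{lm:J}, $\|J_x\|_2^2=a^2$ for almost every $x$. By Lemma~\ref{lm:TJr}, $d_J^{\uparrow}(\eta)\sim\Unif([-a,a])$ has a continuous law, so $\operatorname{sgn}(d_J^{\uparrow}(x)-d_J^{\uparrow}(y))^2=1$ for almost every $y$. Hence
\[
\int_0^1\bigl(J(x,y)-a\,\operatorname{sgn}(d_J^{\uparrow}(x)-d_J^{\uparrow}(y))\bigr)^2\,\mathrm dy
= 2a^2-2a\int_0^1 J(x,y)\,\operatorname{sgn}(d_J^{\uparrow}(x)-d_J^{\uparrow}(y))\,\mathrm dy .
\]
It therefore suffices to show that the last integral equals $a$ for almost every $x$. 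Fubini then gives \eqref{eq:FIdentified} for almost every $(x,y)$.

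The key step is to recognize \eqref{eq:TJr} as an integral identity. Write $\mu$ for the uniform law on $[-a,a]$. A direct computation, splitting $\int_{-a}^{a}$ at $t=d_J^{\uparrow}(x)$, shows that for every $r\ge 0$
\[
\frac12\int_{-a}^{a} t^r\,\operatorname{sgn}(d_J^{\uparrow}(x)-t)\,\mathrm dt
=\begin{cases}\frac{d_J^{\uparrow}(x)^{r+1}}{r+1}, & r \text{ even},\\[0.5ex] \frac{d_J^{\uparrow}(x)^{r+1}-a^{r+1}}{r+1}, & r\text{ odd},\end{cases}
\]
which is exactly the right-hand side of \eqref{eq:TJr}. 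Since $d_J^{\uparrow}(\eta)\sim\mu$ has density $\frac{1}{2a}$, the left-hand side equals $a\int_0^1 d_J^{\uparrow}(y)^r\operatorname{sgn}(d_J^{\uparrow}(x)-d_J^{\uparrow}(y))\,\mathrm dy$. By linearity, for every polynomial $g$ and almost every $x$,
\[
\int_0^1 J(x,y)\,g(d_J^{\uparrow}(y))\,\mathrm dy = a\int_0^1 \operatorname{sgn}(d_J^{\uparrow}(x)-d_J^{\uparrow}(y))\,g(d_J^{\uparrow}(y))\,\mathrm dy. \qquad (\ast)
\]

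The delicate point, which I expect to be the main obstacle, is that we want to apply $(\ast)$ with $g=\operatorname{sgn}(d_J^{\uparrow}(x)-\cdot)$, which depends on $x$. So $(\ast)$ must hold for all bounded measurable $g$ simultaneously, off a single null set of $x$. I would handle this as follows.
\begin{enumerate}
\item Take the countable family of polynomials with rational coefficients and discard the countable union of the exceptional null sets, so that $(\ast)$ holds for all of them at every remaining $x$.
\item For such a fixed $x$, both sides of $(\ast)$ are bounded linear functionals of $g\in L^2(\mu)$. Indeed, $|J|\le\frac12$ and $|\operatorname{sgn}|\le1$, so each side is at most a constant times $\|g(d_J^{\uparrow}(\eta))\|_{L^2}=\|g\|_{L^2(\mu)}$ by Cauchy--Schwarz.
\item Rational polynomials are dense in $L^2(\mu)$ by Weierstrass, since $\mu$ is compactly supported. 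Hence $(\ast)$ extends to all $g\in L^2(\mu)$ at every such $x$, and in particular to $g=\operatorname{sgn}(d_J^{\uparrow}(x)-\cdot)$.
\end{enumerate}
With this choice of $g$, the right-hand side of $(\ast)$ becomes $a\int_0^1\operatorname{sgn}^2=a$, which is exactly what was needed. The squared distance is then $2a^2-2a^2=0$ for almost every $x$, proving \eqref{eq:FIdentified}.
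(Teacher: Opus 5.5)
Your proof is correct and follows essentially the same route as the paper. Both arguments obtain the polynomial identity from \eqref{eq:TJr} and the uniform law of $d_J^{\uparrow}(\eta)$, extend it at a fixed $x$ to all of $L^2$ by density and continuity of both functionals, and match against $\|J_x\|_2^2=a^2$ from Lemma~\ref{lm:J}. The only difference is cosmetic and in the last step: the paper identifies $\mathbb E[J(x,\eta)\mid d_J^{\uparrow}(\eta)]$ and shows the conditional variance vanishes, while you plug the single test function $g=\operatorname{sgn}(d_J^{\uparrow}(x)-\cdot)$ into the expansion of $\|J_x-a\operatorname{sgn}(d_J^{\uparrow}(x)-d_J^{\uparrow}(\cdot))\|_2^2$, which is the same Pythagorean argument written directly.
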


\begin{proof}
First, observe that for every polynomial $q$, 
\begin{align}  \label{eq:polynomialIdentity} 
    T_J[q(d^{\uparrow}_J)](x) & = \int_0^1 J(x, y) q(d^{\uparrow}_J(y)) \mathrm d y \nonumber \\ 
    & = \frac{1}{2} \int_{-a}^a \operatorname{sgn}( d_J^{\uparrow}(x) - t)q(t) \mathrm{d} t  \nonumber \\ 
    & = \E [a \operatorname{sgn}( d_J^{\uparrow}(x) - d_J^{\uparrow}(\eta))q(d_J^{\uparrow}(\eta)) ] ,  
    \end{align}  
where $\eta \sim \Unif([0,1])$. The first equality in  \eqref{eq:polynomialIdentity} is from the definition of $T_J$ and the third equality is because $d_J^{\uparrow}(\eta)\sim \Unif([-a,a])$ (by Lemma \ref{lm:TJr}). Also, note that it suffices to verify the second equality for $q(t)=t^r$, for any integer $r \geq 0$. In this case,  
\begin{align*}
    \frac{1}{2} \int_{-a}^a \operatorname{sgn}(d_J^{\uparrow}(x) - t)t^r  \mathrm{d} t &=\frac{1}{2} \left(\int_{-a}^{d_J^{\uparrow}(x)} t^r \mathrm{d} t-\int_{d_J^{\uparrow}(x)}^a t^r \mathrm{d} t\right) =  T_J [(d_J^{\uparrow})^r] (x) , 
\end{align*} 
by recalling \eqref{eq:TJr}. Now, we extend \eqref{eq:polynomialIdentity} from polynomials to all functions in \(L^2([-a,a])\). For this, let \(\mathcal P_{\mathbb Q}\) be a countable dense subset of \(L^2([-a,a])\) consisting of
polynomials with rational coefficients. 
%(This is because, by the Weierstrass approximation theorem \cite[Chapter~7]{rudin1976principles}, together with the density of continuous functions in $L^2([-a,a])$ \cite[Proposition~7.9]{folland1999real}, the polynomials are dense in \(L^2([0,1])\). Further, since every polynomial can be uniformly approximated by polynomials with rational coefficients, the set of polynomials with rational coefficients is also dense in \(L^2([-a,a])\).
Since \eqref{eq:polynomialIdentity} holds for each \(q\in\mathcal P_{\mathbb Q}\) on a full-measure subset of $[0, 1]$, there exists a common full-measure set \(A_1 \subseteq[0,1]\) such that for all $x \in A_1$,  \eqref{eq:polynomialIdentity} holds for every \(q\in\mathcal P_{\mathbb Q}\). Finally, set $A = A_1 \cap \overline{S}_{\eqref{eq:J}}$, where $\overline{S}_{\eqref{eq:J}}$ is the full measure subset of $[0, 1]$ for which \eqref{eq:J} holds. 
Now, fix \(x\in A\) and consider the map $f\rightarrow \int_0^1 J(x,y)f(d_J^{\uparrow}(y))\,\mathrm{d}y$. This is a continuous linear functional on \(L^2([-a,a])\), because
$$\left|\int_0^1 J(x,y)f(d_J^{\uparrow}(y))\,dy\right| \le \|J_x\|_2 \left(\mathbb E[f(d_J^{\uparrow}(\eta))^2]\right)^{\frac{1}{2}}.$$
Similarly, $f\rightarrow\mathbb E[a\,\mathrm{sgn}\bigl(d_J^{\uparrow}(x)-d_J^{\uparrow}(\eta)\bigr)f(d_J^{\uparrow}(\eta)) ]$ is continuous on \(L^2([-a,a])\). Since the two functionals agree on the dense set
\(\mathcal P_{\mathbb Q}\), they agree on all of \(L^2([-a,a])\). Therefore, for every
\(f\in L^2([-a,a])\),
\begin{align*}%\label{eq:fJ} 
\mathbb E\!\left[J(x,\eta)f(d_J^{\uparrow}(\eta))\right] = \mathbb E\!\left[ a\,\mathrm{sgn}\bigl(d_J^{\uparrow}(x)-d_J^{\uparrow}(\eta)\bigr)f(d_J^{\uparrow}(\eta)) \right] ,  
    \end{align*}  
for all $f \in L^2([-a,a])$. This implies, by the definition of conditional expectation, for $x \in A$, 
\begin{equation}
    \mathbb E\left[J(x, \eta)\mid d_J^{\uparrow}(\eta)\right]
    =a\,\operatorname{sgn}(d_J^{\uparrow}(x)-d_J^{\uparrow}(\eta)),
    \label{eq:conditionalMean}
\end{equation}
where $\eta \sim \Unif([0,1])$. By \eqref{eq:J}, $\mathbb E[J(x, \eta)^2]=a^2$, for $x \in A$. Also, by \eqref{eq:conditionalMean},    
$$\mathbb E\left[ \left(\mathbb E[J(x, \eta)\mid d_J^{\uparrow}(\eta)]\right)^2 \right]    =a^2.$$ Therefore,
$$\mathbb E\!\left[ \Var\bigl[J(x,\eta)\mid d_J^{\uparrow}(\eta)\bigr] 
\right] = \mathbb E[J(x,\eta)^2] - \mathbb E\!\left[ \mathbb E[J(x,\eta)\mid d_J^{\uparrow}(\eta)]^2 \right] =0.$$
Thus, $J(x, y) = a\,\mathrm{sgn} (d_J^{\uparrow}(x)-d_J^{\uparrow}(y))$
almost surely, for every \(x\in A\). Since \(A\) has full measure, by Fubini's theorem the result in \eqref{eq:FIdentified} follows.  
\end{proof}

Note that, since $|J(x,y)|\le \frac{1}{2}$, Lemma~\ref{lm:J} gives $a\le \frac{1}{2}$. Now, define 
$$\phi(x):=\frac{1}{2} \left( 1 - \frac{d_J^{\uparrow}(x)}{a} \right)  .$$ Observe that 
$\phi(\eta)\sim \Unif([0,1])$, since $d_J^\uparrow(\eta)\sim \Unif([-a,a])$, by Lemma \ref{lm:TJr}. Hence, $\phi$ is measure-preserving. Further, 
$$d_J^{\uparrow}(x)>d_J^{\uparrow}(y)  \quad \Longleftrightarrow \quad \phi(x)<\phi(y).$$
Also, since $d_J^\uparrow(\eta)$ has a continuous distribution, the set $\{(x,y):\phi(x)=\phi(y)\}$ has measure zero. Thus, setting $p:=\frac{1}{2}+a\in[\frac{1}{2},1]$ and applying  \eqref{eq:FIdentified} gives,  
$$W(x,y) =
\frac{1}{2}+J(x,y)
=
\begin{cases}
p, & \phi(x)<\phi(y),\\
1-p, & \phi(x)>\phi(y),
\end{cases}
$$
for almost every $(x,y) \in [0, 1]^2$. Therefore, $W$ is equivalent, up to the measure-preserving transformation $\phi$, to the Condorcet tournamenton $W_p$ in \eqref{eq:randomW}. \hfill $\Box$

\section{Proof of Theorem \ref{thm:TnJ}}
\label{sec:TnJpf}

To begin with, suppose $\{\lambda_s\}_{s \geq 1}$ is an enumeration of $\Spec^-(\triangle_{W})$. Then denote the random variable in the RHS of \eqref{eq:TnJ} as:  
\begin{align}\label{eq:TnJpf}
X := \tau_W \cdot Z+\sum_{s=1}^\infty \lambda_s (Z_s^2-1), 
\end{align}
where $Z, \{Z_s\}_{s \geq 1}$ are i.i.d. $N(0,\,1)$ and $\tau_W^2$ is as defined in \eqref{eq:tau_W}. To prove the result in Theorem \ref{thm:TnJ} it suffices to show that the moment generating function of $\hat X_{\triangle}(T_n)|T_n$ converges to the moment generating function  of $X$ in a neighborhood of zero. Towards this, we first compute the moment generating function of $X$ in the following lemma. Throughout, for $r \geq 3$, the undirected cycle of length $r$ will be denoted by $C_r$. With slight abuse of notation, we denote the homomorphism density of $C_r$ in the graphon $\triangle_W$ (defined in \eqref{eq:Wp}) as: 
$$t(C_r,\,\triangle_{W})=\int_{[0,\,1]^r}\triangle_{W}(x_1,\,x_2)\triangle_{W}(x_2,\,x_3)\ldots \triangle_{W}(x_{r-1},\,x_r)\triangle_{W}(x_r,\,x_1)\prod_{s=1}^r\mathrm{d}x_s.$$
for $r\ge 3.$ Also, $\|\triangle_{W}\|_2 := (\int_{[0, 1]^2} \triangle_{W}(x_1,\,x_2) \mathrm d x_1 \mathrm d x_2)^{\frac{1}{2}}$ denotes the $L^2$ norm of $\triangle_{W}$.

\begin{lem}
Suppose $W$ is a $\triangle$-regular tournamenton and $X$ be as defined in \eqref{eq:TnJpf}. Denote by $M_X(t) := \E[e^{t X}]$ the moment generating function of $X$. Then, for all $|t|<\frac{1}{4}$, 
$$\log M_X(t)=\left(\frac{\tau_W^2}{2} + \| \triangle_{W} \|_2^2 -\gamma_W^2\right)t^2+\frac{1}{2}\sum_{r=3}^\infty\left(\frac{2^{r}(t(C_r,\,\triangle_{W}) - \gamma_W^{r} )}{r}\right)t^r ,$$ 
\label{lem:limit_MGF}
where $\tau_W^2$ is as defined in \eqref{eq:tau_W} and $\gamma_W =t(\cc, W) $. \end{lem}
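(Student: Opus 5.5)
We compute $\log M_X(t)$ directly from the representation \eqref{eq:TnJpf}, using independence of $Z$ and the $Z_s$. The Gaussian part gives $\log \E[e^{t\tau_W Z}] = \tau_W^2 t^2/2$. For each $s$, $\E[e^{t\lambda_s(Z_s^2-1)}] = e^{-t\lambda_s}(1-2t\lambda_s)^{-1/2}$, valid whenever $2|t\lambda_s|<1$. Every eigenvalue of $T_{\triangle_W}$ satisfies $|\lambda|\le \|\triangle_W\|_2\le 1$ by \eqref{eq:summable}, so $|t|<\frac14$ gives $|2t\lambda_s|<\frac12$ uniformly in $s$. This justifies the Taylor expansion
\[
-\tfrac12\log(1-2t\lambda_s) - t\lambda_s=\tfrac12\sum_{r\ge 2}\frac{(2t\lambda_s)^r}{r}.
\]
To sum over $s$ and interchange the sums, we use the bound $\sum_s\sum_{r\ge2}|2t\lambda_s|^r/r\le \sum_s 4t^2\lambda_s^2\sum_{r\ge0}2^{-r}<\infty$, which holds since $\sum_s\lambda_s^2<\infty$. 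By Fubini, and because $\prod_s \E[e^{t\lambda_s(Z_s^2-1)}]$ converges (the partial sums converge in $L^2$, and dominated or monotone convergence applies to the MGF for $|t|<\frac14$), we obtain
\[
\log M_X(t)=\frac{\tau_W^2t^2}{2}+\frac12\sum_{r\ge2}\frac{2^r t^r}{r}\sum_{s}\lambda_s^r .
\]

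Next we identify the power sums $\sum_s\lambda_s^r$ over $\Spec^-(\triangle_W)$. Since $\Spec^-$ is $\Spec(\triangle_W)$ with one copy of $\gamma_W=t(\cc,W)$ removed (recall the discussion after \eqref{eq:Wdegree}, valid under $\triangle$-regularity), we have $\sum_s\lambda_s^r=\sum_{\lambda\in\Spec(\triangle_W)}\lambda^r-\gamma_W^r$. For $r=2$, the Hilbert--Schmidt identity \eqref{eq:summable} gives $\sum\lambda^2=\|\triangle_W\|_2^2$. The $r=2$ term therefore equals $\frac12\cdot\frac{4t^2}{2}(\|\triangle_W\|_2^2-\gamma_W^2)=(\|\triangle_W\|_2^2-\gamma_W^2)t^2$, which matches the claimed coefficient. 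For $r\ge3$, the operator $T_{\triangle_W}^r$ is trace class, and its kernel's diagonal integral (the cycle density) equals its trace: $\sum_{\lambda}\lambda^r = t(C_r,\triangle_W)$. This follows from the spectral expansion $\triangle_W(x,y)=\sum\lambda\phi_\lambda(x)\phi_\lambda(y)$ in $L^2$: iterating the kernel $r$ times and integrating along the cycle yields $\sum\lambda^r$ by orthonormality, and for $r\ge3$ absolute convergence follows from $\sum\lambda^2<\infty$. Substituting gives the terms $\frac{2^r(t(C_r,\triangle_W)-\gamma_W^r)}{r}t^r$ with the factor $\frac12$, as stated.

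The main care point is the justification of interchanging the infinite product/sum over eigenvalues with the expectation and with the power series in $r$. The uniform bound $|\lambda|\le1$ and the restriction $|t|<\frac14$ are what make all the series absolutely convergent. The trace identity $\sum\lambda^r=t(C_r,\triangle_W)$ for $r\ge3$ is standard, but it should be checked via the $L^2$ spectral expansion rather than pointwise Mercer, since $\triangle_W$ need not be continuous or positive definite.
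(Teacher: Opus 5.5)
Your argument follows the paper's route. You factor the MGF using independence, expand $-\tfrac12\log(1-2t\lambda_s)-t\lambda_s$ as a power series, and swap the sums by absolute convergence. You then identify the power sums over $\Spec^-(\triangle_W)$ via $\sum_\lambda\lambda^2=\|\triangle_W\|_2^2$ and $\sum_\lambda\lambda^r=t(C_r,\triangle_W)$. Your $L^2$ spectral-expansion justification of the cycle identity is a fine substitute for the paper's citation of Lov\'asz's book.

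The one step not justified as written is the identity $M_X(t)=\lim_{K}\E[e^{tX_K}]=\prod_s \E[e^{t\lambda_s(Z_s^2-1)}]$, where $X_K$ is the truncation to the first $K$ eigenvalues.
\begin{itemize}
\item $L^2$ convergence $X_K\to X$ only gives $e^{tX_K}\to e^{tX}$ in probability. Together with Fatou, it gives only $M_X(t)\le\liminf_K\E[e^{tX_K}]$.
\item Monotone convergence is unavailable, because the summands $\lambda_s(Z_s^2-1)$ change sign, so $e^{tX_K}$ is not monotone in $K$.
\item For dominated convergence you have not exhibited a dominating function.
\item Convergence of the infinite product by itself says nothing about whether it equals $\E[e^{tX}]$.
\end{itemize}

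The paper closes this with uniform integrability. Choose $\varepsilon>0$ with $(1+\varepsilon)|t|<\tfrac14$. Your own series estimate shows that $\log\E[e^{(1+\varepsilon)tX_K}]$ is bounded uniformly in $K$; the paper obtains the bound $\tfrac{\tau_W^2(1+\varepsilon)^2t^2}{2}+2(1+\varepsilon)^2t^2$ from $\log(1-x)+x\ge -x^2$ for $|x|<\tfrac12$. Hence $\{e^{tX_K}\}_{K\ge1}$ is bounded in $L^{1+\varepsilon}$, therefore uniformly integrable. Combined with convergence in probability, this gives $\E[e^{tX_K}]\to\E[e^{tX}]$.

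An alternative fix: the variables $e^{tX_K}/\E[e^{tX_K}]$ form a positive martingale, bounded in $L^{1+\varepsilon}$ (use $\E[e^{tX_K}]\ge1$ by Jensen). Doob's maximal inequality then yields an honest dominating function. With either fix inserted, your proof is complete.
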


\begin{proof} 
%Let $\{\lambda_s\}_{s \geq 1}$ be an enumeration of $\Spec^-(\triangle_{W}).$ 
Fix an integer $K \geq 1$ and define
$$X_K= \tau_W Z+\sum_{s=1}^K \lambda_s(Z_s^2-1), $$ 
where $Z, \{Z_s\}_{s \geq 1}$ are i.i.d. $N(0, 1)$. Then, because of  \eqref{eq:summable}, $X_K$ converges in $L^2$ to the random variable $X$, defined in \eqref{eq:TnJpf}, as $K \rightarrow \infty$. Hence, 
\begin{align}\label{eq:exptXK} 
e^{t X_K} \overset{P} \rightarrow e^{t X} , \quad \text{ for all } t \in \R,  
\end{align}
as $K \rightarrow \infty$. Furthermore, the moment generating function of $X_K$ exists for every $|t|<\frac{1}{2}$ and is given by, 
\begin{align*}
    \E[e^{tX_K} ] = \E\left[ e^{ t \tau_W Z+ t \sum_{s=1}^K \lambda_s (Z_s ^2-1)} \right] 
    %&=\exp\left(-t\sum_{s=1}^K \lambda_s\right)\E[\exp(tZ)]\prod_{s=1}^K\E\left[\exp(\lambda_s tZ_i^2)\right]\\
    &= e^{ \frac{\tau_W^2t^2}{2}} \prod_{s=1}^K e^{-t \lambda_s} \left( \frac{1}{1-2\lambda_s t} \right)^{\frac{1}{2}} , 
\end{align*} 
using the independence of $Z$ and the collection $\{Z_s\}_{s \geq 1}$. Now, using the bound $\log(1-x)+x\ge-x^2$, for $|x|< \frac{1}{2}$ gives the following. For $|t| < \frac{1}{4}$, 
\begin{align}\label{eq:exponentialXK} 
\log \E[ e^{tX_K} ] & =\frac{\tau_W^2t^2}{2} - \sum_{s=1}^K \left[\frac{\log(1-2\lambda_s t)+2\lambda_s t}{2}\right]  \nonumber \\ 
& \le\frac{\tau_W^2t^2}{2}+ 2 \sum_{s=1}^K \lambda_s^2t^2 \le\frac{\tau_W^2t^2}{2}+ 2 t^2 < \infty , 
\end{align} 
since $|2\lambda_s t|< \frac{1}{2}$, as $|t| < \frac{1}{4}$ and $|\lambda_s|\le1$ (recall  \eqref{eq:summable}).  Given $|t| < \frac{1}{4}$, choose $\varepsilon > 0$ such that $(1+\varepsilon) |t| < \frac{1}{4}$. Then, \eqref{eq:exponentialXK} implies, $\E[ e^{(1+\varepsilon) tX_K} ]  < \infty$. Hence, $\{e^{ tX_K}  : K \geq 1\}$ is uniformly integrable. Combining this with \eqref{eq:exptXK} gives, for $|t| < \frac{1}{4}$,
$$M_X(t) = \E[e^{t X}] = \lim_{K \rightarrow \infty} \E[e^{t X_K}] = e^{ \frac{\tau_W^2t^2}{2}} \prod_{s=1}^\infty  e^{-t \lambda_s}  \left( \frac{1}{1-2\lambda_s t} \right)^{\frac{1}{2}}.$$
Hence, for $|t| < \frac{1}{4}$ and by Fubini's theorem gives, 
\begin{align}
    \log M_X(t) &=\frac{\tau_W^2t^2}{2}+\frac{1}{2}\sum_{r=2}^\infty\frac{2^rt^r}{r}\sum_{s=1}^\infty\lambda_s ^r\notag\\
    &=\left(\frac{\tau_W^2}{2}+\sum_{s=1}^\infty\lambda_s ^2\right)t^2+\frac{1}{2}\sum_{r=3}^\infty\frac{2^rt^r}{r}\sum_{s=1}^\infty\lambda_s ^r . 
    \label{eq:lim_MGF_1}
\end{align} 
Now, if $W$ is $\triangle$-regular, then $t^{\triangle}_{W}(x) = t(\cc, W) = \gamma_W$, almost everywhere on $[0,\,1]$. Then $\gamma_W$ is an eigenvalue of the operator $T_{\triangle_W}$ derived from the kernel $\triangle_{W}$, defined in \eqref{eq:W_tr}. Since $\{\lambda_s \}_{s \geq 1}$ is an enumeration of $\Spec^-(\triangle_{W})$, which is the multiset of eigenvalues of $\triangle_{W}$ after decreasing the multiplicity of $\gamma_W$ by 1, we have for any integer $r \geq 3$, 
\begin{align}
    \gamma_W^r+\sum_{s=1}^\infty\lambda_s ^r=\sum_{\lambda\in\Spec(\triangle_{W})}\lambda^r = t(C_r,\,\triangle_{W})  ,   
    \label{eq:cycle}
\end{align}
where the second equality follows from Equation (7.22) in \cite{lovasz_book}. Also, by \eqref{eq:summable}, 
\begin{align}
    \gamma_W^2+\sum_{s=1}^\infty\lambda_s ^2&=\sum_{\lambda\in\Spec(\triangle_{W})}\lambda^2 = \| \triangle_W \|_2^2 .     
   \label{eq:doubleedge}
\end{align}
Combining \eqref{eq:lim_MGF_1}, \eqref{eq:cycle}, and \eqref{eq:doubleedge}, the result in Lemma \ref{lem:limit_MGF} follows. 
\end{proof}

Now, recall the definition of the random variable $\hat X_{\triangle}(T_n)$ from \eqref{eq:XTn}. Denote by $$M_{\hat X_{\triangle}(T_n)}(t) = \E[e^{ t \hat X_{\triangle}(T_n)}|T_n],$$ 
the moment generating function of $\hat X_{\triangle}(T_n)$ given the tournament $T_n$ (that is, the expectation is taken over the randomness of Gaussian multipliers $\{J_s\}_{1 \leq s \leq n}$). To complete the proof of Theorem \ref{thm:TnJ} it suffices to show the  following:

\begin{lem} 
For any $|t| < \frac{1}{4}$, 
$$M_{\hat X_{\triangle}(T_n)}(t) \rightarrow M_{X}(t), $$
almost surely. 
\label{lem:emp_MGF}
\end{lem}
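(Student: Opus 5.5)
The plan is to compute the conditional moment generating function explicitly and reduce Lemma~\ref{lem:emp_MGF} to convergence of traces of powers of the matrix $\hat{\Delta}_n := \frac1n\hat\triangle_{T_n}$.

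\textbf{Step 1: cumulant expansion.} Since the $J_s$ are i.i.d.\ $N(0,1)$,
$$\log M_{\hat X_\triangle(T_n)}(t)=\sum_{s=2}^n\Bigl[-t\hat\lambda_s-\tfrac12\log(1-2\hat\lambda_s t)\Bigr]=\frac12\sum_{r=2}^\infty\frac{(2t)^r}{r}\sum_{s=2}^n\hat\lambda_s^r .$$
The linear terms cancel exactly. Every entry of $\hat\triangle_{T_n}$ lies in $[0,1]$, so $\sum_s\hat\lambda_s^2=\tr(\hat\Delta_n^2)\le1$ and $|\hat\lambda_s|\le1$. Hence for $|t|<\frac14$ the $r$-th term is bounded by $2^{-r}\cdot\frac{1}{r}\cdot 4$, uniformly in $n$.

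By dominated convergence in $r$ and comparison with Lemma~\ref{lem:limit_MGF}, it suffices to prove the following, almost surely:
\begin{itemize}
\item[(a)] $\tr(\hat\Delta_n^2)-\hat\lambda_1^2\to \frac{\tau_W^2}{2}+\|\triangle_W\|_2^2-\gamma_W^2$;
\item[(b)] $\tr(\hat\Delta_n^r)\to t(C_r,\triangle_W)$ for every fixed $r\ge3$;
\item[(c)] $\hat\lambda_1\to\gamma_W$.
\end{itemize}
Here we use that the coefficient of $t^2$ is $\frac12\cdot\frac{4}{2}\sum_{s\ge2}\hat\lambda_s^2$.

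\textbf{Step 2: traces as $U$-statistics.} Write
$$\tr(\hat\Delta_n^r)=n^{-r}\sum_{u_1,\dots,u_r}\prod_i\hat\triangle_{T_n}(u_i,u_{i+1}),$$
and expand each factor as $\frac{1}{2n}\sum_w(\cdots)$. Tuples with coincident indices, including coincidences between some inner $w$ and a cycle vertex, contribute $O(1/n)$. On distinct indices we take conditional expectations.

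For $r\ge3$, each edge indicator $a_{u_iu_{i+1}}$ appears in only one factor. Its expectation given the $\eta$'s replaces $\hat\triangle$ by $\triangle_W(\eta_{u_i},\eta_{u_{i+1}})$, after also averaging the inner sums over $w$ by the law of large numbers. The result is a $U$-statistic converging to $t(C_r,\triangle_W)$.

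For $r=2$, the same indicator appears twice. Then
$$\E\bigl[\hat\triangle(u,v)^2\mid\eta\bigr]\approx\triangle_W^2+\tfrac14W(1-W)\bigl(t_W(\lp,\cdot,\cdot)-t_W(\lp,\cdot,\cdot)^{\top}\bigr)^2,$$
whose average is $\|\triangle_W\|_2^2+\tau_W^2/2$ by \eqref{eq:tau_W}.

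Almost sure convergence comes from concentration. Changing a single $\eta_u$ alters these traces by $O(1/n)$, and changing a single edge alters them by $O(n^{-2})$. McDiarmid's inequality then gives exponential tails of order $\exp(-cn)$, and Borel--Cantelli finishes the argument.

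\textbf{Step 3: the top eigenvalue, which is the main obstacle.} Decompose $\hat\Delta_n = B_n+E_n$, where $B_n(u,v)=\frac1n\triangle_W(\eta_u,\eta_v)$ and $E_n$ collects the edge-noise and the inner-sum fluctuations.

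\begin{itemize}
\item The noise has independent-edge structure with entries of size $O(1/n)$. A matrix concentration bound, or a high-moment trace bound of the Füredi--Komlós type, should give $\|E_n\|_{\mathrm{op}}=O(\sqrt{\log n/n})$ almost surely. The inner-sum fluctuations are uniformly $O(\sqrt{\log n/n})$ by Hoeffding's inequality and a union bound.
\item The spectrum of the sampled kernel matrix $B_n$ converges to that of $T_{\triangle_W}$, by the standard graphon result that the $W$-random kernel matrix converges in cut distance and eigenvalues are continuous in cut distance.
\item Since $\triangle_W\ge0$ has the constant eigenfunction with eigenvalue $\gamma_W$, $\gamma_W$ is its largest eigenvalue; this is the Perron--Frobenius/Krein--Rutman argument for positive kernels with a positive eigenfunction.
\end{itemize}

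Weyl's inequality then yields $\hat\lambda_1\to\gamma_W$, which is (c). Combining (a)--(c) with Step 1 proves the lemma.

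I expect the operator-norm control of $E_n$ to be the delicate point. Its entries are not independent: $\hat\triangle(u,v)$ shares the $w$-sums with other entries. I would first split $E_n$ into an edge-indicator part, with centered independent entries, and a part coming from fluctuations of the $w$-sums, and handle the latter by a uniform bound in $(u,v)$.
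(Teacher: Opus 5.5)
Your proof is correct. Step 1 and the reduction to (a)--(c) match the paper's structure exactly, and your $r=2$ computation in (a) is also the paper's; you add the McDiarmid/Borel--Cantelli justification that the paper leaves implicit.

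The routes differ for (b) and (c). The paper proves a single structural fact, Lemma~\ref{lm:Wtriangleconvergence}: $\delta_\square(\bar\triangle_{T_n},\triangle_W)\to0$ almost surely. This follows from the tournamenton sampling lemma $W^{T_n}\to W$, together with the observation that $U\mapsto\triangle_U$ is equivariant under relabeling and $3$-Lipschitz in cut norm (a telescoping bound). Then (b) is just the counting lemma for $C_r$, $r\ge3$, and (c) is continuity of the spectrum in the cut metric. No matrix concentration is needed; only the $r=2$ trace, which is not cut-continuous, is computed by hand.

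You instead obtain (b) from $U$-statistic expansions with exponential concentration, and (c) from the split $\hat\Delta_n=B_n+E_n$. The edge part of $E_n$ has entries $\frac{1}{2n}(a_{uv}-W(\eta_u,\eta_v))(t_W(\lp,\eta_u,\eta_v)-t_W(\lp,\eta_v,\eta_u))$, so given the $\eta$'s it is a symmetric matrix with independent centered entries, and matrix Bernstein applies. The $w$-sum part is controlled by a uniform Hoeffding bound. This works and is more quantitative. It also shows why only the second trace sees $\tau_W^2$: $E_n$ has vanishing operator norm but Frobenius mass tending to $\tau_W^2/2$.

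It is, however, heavier than necessary. Since $\|\cdot\|_\square\le\|\cdot\|_{\mathrm{op}}$ for the associated step kernels, $\|E_n\|_{\mathrm{op}}\to0$ plus the sampling lemma for $B_n$ already gives the paper's cut-distance convergence. After that, (b) needs no separate McDiarmid argument, so your Step 3 is in effect a longer proof of Lemma~\ref{lm:Wtriangleconvergence}.

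Your Perron--Frobenius remark fills a point the paper only asserts. Because $\triangle_W\ge0$ has all row integrals equal to $\gamma_W$, Schur's test gives $\|T_{\triangle_W}\|\le\gamma_W$, so $\gamma_W$ is indeed the top eigenvalue.
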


\begin{proof}  Recalling \eqref{eq:XTn} and using the fact that the collection $\{J_s\}_{1 \leq s \leq n}$ are i.i.d. $N(0,\,1)$ gives, for $|t| < \frac{1}{4}$, 
\begin{align}\label{eq:MTn}
    M_{\hat X_{\triangle}(T_n)}(t) = \E[ e^{ t \hat X_\triangle(T_n)} |  T_n ] &=\E\left[ e^{ \sum_{s=2}^n \hat \lambda_s (J_s^2-1)t} \mid T_n\right] = e^{ - t\sum_{s=2}^n \hat \lambda_s } \prod_{s=2}^n \left( \frac{1}{1-2 \hat \lambda_s t} \right)^{\frac{1}{2}} . 
\end{align} 
Note that 
\begin{align*}
\sum_{s=2}^n \hat \lambda_s^2 \leq \sum_{s=1}^n \hat \lambda_s^2 = \frac{1}{n^2} \sum_{1 \leq u , v \leq n} \hat \triangle_{T_n}(u, v)^2 \leq \frac{1}{4} , 
\end{align*}
since $\hat \triangle_{T_n}(u, v) \leq \frac{1}{2}$, for all $1 \leq u, v \leq n$. 
This means $|\hat \lambda_s|< \frac{1}{2}$, for all $s \geq 1$, and the RHS of \eqref{eq:MTn} is well-defined for all $|t| \leq \frac{1}{4}$. Now, by Fubini's theorem and arguments as in \eqref{eq:lim_MGF_1} it follows that 
\begin{align}\label{eq:MTestimate} 
\log M_{\hat X_{\triangle}(T_n)}(t)= t^2 \sum_{s=2}^n \hat \lambda_s^2  +\frac{1}{2} \sum_{r=3}^\infty\left(\frac{2^r \sum_{s=2}^n \hat \lambda_s^r }{r}\right)t^r . 
\end{align}
for $|t| \leq \frac{1}{4}$. Now, define the empirical graphon associated with the matrix $\hat \triangle_{T_n}$ as follows: For $x, y \in [0, 1]$ 
\begin{align}\label{eq:triangleWxy}
\bar{\triangle}_{T_n}(x, y) = \hat \triangle_{T_n}( \lceil n x \rceil, \lceil n y \rceil ). 
\end{align}
We show in Lemma \ref{lm:Wtriangleconvergence} that $\bar \triangle_{T_n}$ converges to $\triangle_W$ in the cut-metric, as $n \rightarrow \infty$. Consequently, from \cite[Chapter 11]{lovasz_book} we have the following, as $n \rightarrow \infty$, almost surely: 
\begin{itemize} 
\item For $r \geq 3$, $\sum_{s=1}^n \hat \lambda_s^r = t(C_r, \bar \triangle_{T_n}) \rightarrow t(C_r, \triangle_{W})$. 
\item $\hat \lambda_1 \rightarrow \lambda_{\max}(\triangle_{W})$, where $\lambda_{\max}(\triangle_{W})$ is the largest eigenvalue of the operator $\triangle_{W}$. 
\end{itemize} 
Recall from the discussion after \eqref{eq:Wdegree} that, if $W$ is $\triangle$-regular, then $\lambda_{\max}(\triangle_{W})= t(\cc, W)$. Hence, for any fixed $K \geq 3$, applying the above results gives, 
\begin{align}\label{eq:MTK}
\lim_{n \rightarrow \infty}\sum_{r=3}^{K} \left(\frac{2^r \sum_{s=2}^n \hat \lambda_s^r }{r}\right)t^r = \sum_{r=3}^K \left(\frac{2^r (t(C_r, \triangle_{W} ) - \gamma_W^r)  }{r}\right)t^r , 
\end{align}
where $\gamma_W = t(\cc, W)$ as in Lemma \ref{lem:limit_MGF}. Furthermore, for $|t|< \frac{1}{4},$
$$\left| \sum_{r=3}^\infty \left(\frac{2^r (t(C_r, \triangle_{W} ) - \gamma_W^r)  }{r}\right)t^r \right| \le \sum_{r=3}^\infty\frac{1}{2^rr}<\infty. $$
Hence, from \eqref{eq:MTK}, for $|t|< \frac{1}{4},$
\begin{align}\label{eq:MTnK}
\lim_{K \rightarrow \infty} \lim_{n \rightarrow \infty} \sum_{r=3}^{K} \left(\frac{2^r \sum_{s=2}^n \hat \lambda_s^r }{r}\right)t^r = \sum_{r=3}^\infty \left(\frac{2^r (t(C_r, \triangle_{W} ) - \gamma_W^r)  }{r}\right)t^r . 
\end{align} 
Also, for $|t|<\frac{1}{4}$, 
\begin{align}\label{eq:MT}
\lim_{K \rightarrow \infty} \sup_{n \geq 1} \left| \sum_{r=K+1}^\infty \left(\frac{2^r \sum_{s=2}^n \hat \lambda_s^r }{r}\right)t^r \right| \le \lim_{K \rightarrow \infty} \sum_{r=K+1}^\infty\frac{1}{2^rr}  = 0 . 
\end{align} 
Combining \eqref{eq:MTnK} and \eqref{eq:MT} gives, the limit of the second term in 
\eqref{eq:MTestimate}, 
\begin{align}\label{eq:MTestimateII}
\lim_{n \rightarrow \infty}  \frac{1}{2} \sum_{r=3}^{\infty} \left(\frac{2^r \sum_{s=2}^n \hat \lambda_s^r }{r}\right)t^r =  \frac{1}{2} \sum_{r=3}^\infty \left(\frac{2^r (t(C_r, \triangle_{W} ) - \gamma_W^r)  }{r}\right)t^r . 
\end{align} 

Next, we consider the first term in \eqref{eq:MTestimate}. Towards this, note that 
\begin{align*} 
\sum_{s=1}^n \hat \lambda_s^2 & = \frac{1}{n^2} \sum_{1 \leq u , v \leq n} \hat \triangle_{T_n}(u, v)^2  \nonumber \\ 
& =  \frac{1}{n^2} \sum_{1 \leq u , v \leq n} \left[ \frac{a_{uv}}{2n}\sum_{w=1}^na_{vw}a_{wu}+\frac{a_{vu}}{2n}\sum_{w=1}^na_{uw}a_{wv}\right]^2 \nonumber \\ 
&  =  \frac{1}{4n^2} \sum_{1 \leq u , v \leq n} \left[ a_{uv}  \left( \frac{1}{n} \sum_{w=1}^n a_{vw} a_{wu} \right)^2 + a_{vu} \left( \frac{1}{n} \sum_{w=1}^n a_{uw}a_{wv} \right)^2 \right] , 
\end{align*}
since $a_{uv}^2 = a_{uv}$, $a_{vu}^2 = a_{vu}$, and $a_{uv}a_{vu} = 0$. Hence, 
\begin{align} 
\sum_{s=1}^n \hat \lambda_s^2 & \rightarrow \frac{1}{4}\E[W(\eta_1,\,\eta_2)(t_W( \lp, \eta_1, \eta_2 ))^2+W(\eta_2,\,\eta_1)(t_W( \lp, \eta_2, \eta_1 ))^2] \nonumber \\ 
    & = \frac{1}{4}\E[W(\eta_1,\,\eta_2)W(\eta_2,\,\eta_1)(t_W( \lp, \eta_1, \eta_2 )-t_W( \lp, \eta_2, \eta_1 ))^2] + \| \triangle_{W} \|_2^2  \nonumber \\ 
    & = \frac{\tau_W^2}{2} + \| \triangle_{W} \|_2^2 ,  \nonumber 
\end{align} 
where the second equality follows from recalling \eqref{eq:W_tr} and last equality is from \eqref{eq:tau_W}. Now, since $\hat \lambda_1 \rightarrow t(\cc, W) = \gamma_W$,  we have, 
\begin{align}\label{eq:MTestimateI}
\sum_{s=2}^n \hat \lambda_s^2 \rightarrow \frac{\tau_W^2}{2} + \| \triangle_{W} \|_2^2 - \gamma_W^2 , 
\end{align} 
as $n \rightarrow \infty$, almost surely. 
Combining \eqref{eq:MTestimate}, \eqref{eq:MTestimateII}, and \eqref{eq:MTestimateI},  Lemma \ref{lem:emp_MGF} follows. 
\end{proof}

\section{Proofs from Section \ref{sec:cW}}

\subsection{Proof of Proposition \ref{prop:H01r}}
\label{sec:H01rpf}

Recall the definition of $t^{\triangle}_W(x)$ from \eqref{eq:ccjoin}. By applying the Cauchy-Schwarz inequality, 
\begin{align*}
t(\cccc \,,\, W) - t(\cc, \, W)^2 = \int_0^1 t^{\triangle}_W(x)^2 \mathrm{d} x  - \left( \int_0^1 t^{\triangle}_W(x) \mathrm{d} x \right)^2 \geq 0, 
\end{align*} 
for any tournamenton $W$. Applying this for the empirical tournamenton $W^{T_{n}}$ gives, $$\hat \sigma^2_{T_n} = t(\cccc \,,\, W^{T_n}) - t(\cc, \, W^{T_n})^2 \geq 0.$$ Now, we consider the following 2 cases: 
    
    \begin{itemize} 
    
    \item \textit{$W$ is $\triangle$-regular:} Since $\hat \sigma^2_{T_n} \geq 0$, it is now enough to show that $\mathbb{E}\hat \sigma_{T_n}^2 =O(1/n)$. Towards that, note that,
    \begin{align}\label{eq:exprERHWn}
        & \E \hat \sigma^2_{T_n}  = \E [t(\cccc \,,\, W^{T_n}) ] - \E[t(\cc, \, W^{T_n})^2 ] . 
    \end{align}
    Also, observe that for any tournamenton $W$, $$t(\cc \bigsqcup \cc,W) = t(\cc , W)^2,$$ where $\cc \bigsqcup \cc$ is the disjoint union of 2 copies of $\cc$. Then, by \cite[Lemma 2.4 ]{lovasz2006limits} (adapted in a straightforward manner from the graphon to the tournamenton setting), 
        \begin{align*}
      \left| \E[t(\cc, \, W^{T_n})^2 ] - t(\cc,W)^2 \right | =  \left| \E[t(\cc \bigsqcup \cc, \, W^{T_n}) ] - t(\cc \bigsqcup \cc, W) \right | \leq \frac{c}{n} ,  
    \end{align*}
    for some constant $c > 0$. 
    Similarly,  
    \begin{align*}
      \left| \E[t(\cccc, \, W^{T_n}) ] - t(\cccc, \, W) \right | \leq \frac{c}{n} . 
    \end{align*}
    Substituting these bounds in \eqref{eq:exprERHWn} gives,  
    \begin{align*}
        \mathbb{E}\hat \sigma^2_{T_n} \leq t(\cccc \,,\, W) - t(\cc, \, W)^2  +  \frac{2 c}{n}  = O \left( \frac{1}{n} \right ) ,  
    \end{align*} 
    since $\sigma_W^2 = t(\cccc \,,\, W) - t(\cc, \, W)^2 = 0$, when $W$ is $\triangle$-regular.

    \item \textit{$W$ is not $\triangle$-regular:} In this case, by arguments as in \cite[Corollary 10.4]{lovasz_book} it follows that $\hat \sigma^2_{T_n} \overset{P}{\rightarrow} \sigma_W^2$. Since $\sigma_W^2>0$, whenever $W$ is not $\triangle$-regular (recall Theorem \ref{thm:N} (1)), the result in Proposition \ref{prop:H01r} (2) follows. \hfill $\Box$ 
    \end{itemize}

    \subsection{Proof of Proposition \ref{prop:Huniform}} 
\label{sec:Hunifpf}

Denote by $([n])_4$ the collection of 4-tuples $\bm s = (s_1, s_2, s_3, s_4) \in [n]^4$ with distinct indices. For $\bm s \in ([n])_4$, define $a_{\bm s} = a_{s_1s_2} a_{s_1s_3} a_{s_1s_4} a_{s_2s_3} a_{s_2 s_4} a_{s_3s_4} $. 
Then recalling \eqref{eq:trTn} gives, 
$$\hat t_{\Tr_4}(T_n) = \frac{1}{(n)_4} \sum_{\bm s \in ([n])_4} a_{\bm s} . $$
Hence, 
\begin{align}
\mathrm{Var}[ \hat t_{\Tr_4}(T_n) ] = \frac{1}{(n)_4^2 } \sum_{\bm s, \bm s' \in ([n])_4} \mathrm{Cov}[ a_{\bm s}, a_{\bm s'} ] . 
\label{eq:varianceTn}
\end{align}
Note that $\mathrm{Cov}[a_{\bm s},a_{\bm s'}]=0$ whenever $\bm s$ and $\bm s'$ have no index in common. Since the number of ways to choose $\bm s,\bm s' \in ([n])_4$ such that they share a common index is $O(n^7)$, it follows from \eqref{eq:varianceTn} that
$\mathrm{Var}[\hat t_{\Tr_4}(T_n)]=O(\frac{1}{n})$. Now, since $\E[\hat t_{\Tr_4}(T_n)]=t(\Tr_4,W)$, applying Chebyshev's inequality shows that $\hat t_{\Tr_4}(T_n)$ converges in probability to $t(\Tr_4,W)$. Moreover, $t(\Tr_4,W) \ne \frac{1}{64}$ under $H_1$ in \eqref{eq:H01uniform}, since $\Tr_4$ is quasirandom-forcing (recall Definition~\ref{defn:TW}) by \cite{coregliano2017density}. This completes the proof of Proposition~\ref{prop:Huniform} (2).

To prove Proposition \ref{prop:Huniform} (1), note that under uniformity, that is, $W \equiv \frac{1}{2}$, one has $\mathrm{Cov}[a_{\bm s}, a_{\bm s'}] = 0$, whenever $|\bm s \cap \bm s' | \leq 1$. Since the number of ways to choose $\bm s, \bm s' \in ([n])_4$ such that they share at least 2 common indices is $O(n^6)$, we have from \eqref{eq:varianceTn} that $\mathrm{Var}[\hat t_{\Tr_4}(T_n)] = O(\frac{1}{n^2})$, under $H_0$ as in \eqref{eq:H01uniform}.  This establishes the result  in Proposition \ref{prop:Huniform} (1).     \hfill $\Box$

\subsection{Proof of Theorem \ref{thm:Ln}}
\label{sec:Lnpf}

Define the following events:  
$$\mathcal E_{\mathrm {reg}} = \left\{ \hat \sigma^2_{T_n} \leq \frac{1}{\sqrt n} \right\} \quad \text{ and }  \quad  \mathcal E_{\mathrm {unif}}  = \left\{ \left| \hat t_{\Tr_4}(T_n) - \frac{1}{64} \right| \leq \frac{1}{\sqrt n} \right\}.$$
Now, we consider the following cases: 
\begin{itemize}
\item $W$ is not $\triangle$-regular: Then, by Proposition \ref{prop:H01r}, $\P(\mathcal E_{\mathrm {reg}}) = o(1)$. Hence, recalling \eqref{eq:irreg_ci}, 
\begin{align} 
\P(\zeta(W) \in L_n) & = \P( \{ \zeta(W) \in L_n \} \cap \mathcal E_{\mathrm {reg}}^c ) + o(1)  \nonumber \\ 
& =  \P\left( \sqrt{n} \left| \frac{\hat \zeta_n - \zeta(W) }{24 \hat{\sigma}_{T_n}} \right| \leq  z_{\alpha/2} \cap \mathcal E_{\mathrm {reg}}^c \right) +  o(1) \nonumber \\ 
& =  \P\left( \sqrt{n} \left| \frac{\hat \zeta_n - \zeta(W) }{24 \hat{\sigma}_{T_n}} \right| \leq  z_{\alpha/2}  \right) +  o(1) \rightarrow 1- \alpha , \nonumber 
\end{align}
by Theorem \ref{thm:N} (1), Proposition \ref{prop:H01r}, and Slutsky's theorem. 
\item $W$ is $\triangle$-regular and $W \ne \frac{1}{2}$ on a set of positive measure. Then, by Propositions \ref{prop:H01r} and \ref{prop:Huniform}, $\P(\mathcal E_{\mathrm {reg}} \cap \mathcal E_{\mathrm {unif}}^c) \rightarrow 1$. Hence, recalling \eqref{eq:reg_aprx_ci},  
\begin{align} 
& \P(\zeta(W) \in L_n) \nonumber \\ 
& = \P( \{ \zeta(W) \in L_n \} \cap ( \mathcal E_{\mathrm {reg}} \cap \mathcal E_{\mathrm {unif}}^c ) ) + o(1)  \nonumber \\ 
& =  \P\left( \left\{ \hat q_{1-\alpha/2, T_n} \leq n \left( \frac{ \zeta(W) - \hat \zeta_n }{24} \right) \leq  \hat q_{\alpha/2, T_n} \right\} \cap ( \mathcal E_{\mathrm {reg}} \cap \mathcal E_{\mathrm {unif}}^c ) \right) +  o(1) \nonumber \\ 
& =  \P\left( \left\{ \hat q_{1-\alpha/2, T_n} \leq n \left( \frac{\zeta(W) - \hat \zeta_n }{24} \right) \leq  \hat q_{\alpha/2, T_n} \right\}  \right) +  o(1) .  \nonumber 
\end{align}
Now, by Theorem \ref{thm:TnJ} and Polya's theorem, $\hat q_{\alpha, T_n}|T_n \overset{P}\longrightarrow q_{\alpha}$, where $q_{\alpha}$ is the $(1-\alpha)$-th quantile of the distribution in \eqref{eq:reg}. Then Theorem \ref{thm:N} (2) and Slutsky's theorem implies, $\P(\zeta(W) \in L_n) \rightarrow 1 - \alpha$. 
\item  $W = \frac{1}{2}$ almost everywhere. Then, $\P(\mathcal E_{\mathrm {reg}} \cap \mathcal E_{\mathrm {unif}}) \rightarrow 1$ and 
\begin{align*} 
\P(\zeta(W) \in L_n) & = \P( \{ \zeta(W) = 0 \} \cap (\mathcal E_{\mathrm {reg}} \cap \mathcal E_{\mathrm {unif}}) ) + o(1)  \nonumber \\ 
& = \P_{W = \frac{1}{2} }( \mathcal E_{\mathrm {reg}} \cap \mathcal E_{\mathrm {unif}}) + o(1) \rightarrow 1, 
\end{align*} 
 by Proposition \ref{prop:Huniform}. 
\end{itemize}

\small

\subsection*{Acknowledgements} The authors were supported by NSF CAREER grant DMS 2046393 and a Sloan Research Fellowship. The authors also thank Huy Pham for helpful discussions.

\subsection*{AI Disclosure} GPT-5.5 Pro was used to assist with the proof of Theorem \ref{thm:W}. Specifically, the idea of using moments/polynomials in Lemmas \ref{lm:TJr} and  \ref{lm:q} was suggested by GPT-5.5 Pro. The authors reviewed and rewrote the proofs with additional details.

\bibliography{ref.bib,bibliography.bib}
\bibliographystyle{abbrvnat}

\normalsize  

\appendix

\section{Properties of Degree-Regular Tournamentons}
\label{sec:degreepf}

In this section we collect a few basic facts about degree-regular tournamentons (recall Remark \ref{remark:degree}). 

\begin{lem} 
If $W$ is a degree-regular tournamenton, then $W$ is also $\triangle$-regular. In particular, $t^{\triangle}_{W}(x)= \frac{1}{8}$, almost everywhere. 
\label{lem:deg_reg}
\end{lem}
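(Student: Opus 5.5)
The plan is to compute $t^{\triangle}_W(x)$ directly via the decomposition $W = \tfrac12 + J$, using the antisymmetry of $J$ and the vanishing of $d_J^{\uparrow}$ under degree-regularity. Write $J(x,y) = W(x,y) - \tfrac12$, so that $J(y,x) = -J(x,y)$. Degree-regularity means $d^{\uparrow}_W \equiv \tfrac12$ almost everywhere (Remark~\ref{remark:degree}), that is, $d_J^{\uparrow}(x) = \int_0^1 J(x,y)\,\mathrm dy = 0$ for almost every $x$. By antisymmetry we also get $\int_0^1 J(z,x)\,\mathrm dz = -d_J^{\uparrow}(x) = 0$.

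The first step is to expand
\[
t^{\triangle}_W(x) = \int_{[0,1]^2} \Bigl(\tfrac12 + J(x,y)\Bigr)\Bigl(\tfrac12 + J(y,z)\Bigr)\Bigl(\tfrac12 + J(z,x)\Bigr)\,\mathrm dy\,\mathrm dz
\]
into eight terms and treat them by degree in $J$. The constant term is $\tfrac18$. Each linear term vanishes:
\begin{itemize}
\item $\int J(x,y)\,\mathrm dy = d_J^{\uparrow}(x) = 0$;
\item $\int J(z,x)\,\mathrm dz = 0$;
\item $\int\!\!\int J(y,z)\,\mathrm dy\,\mathrm dz = 0$, by antisymmetry alone.
\end{itemize}
Two of the quadratic terms also vanish after integrating out one variable:
\begin{itemize}
\item $\tfrac12\int\!\!\int J(x,y)J(y,z)\,\mathrm dy\,\mathrm dz = \tfrac12\int J(x,y)\, d_J^{\uparrow}(y)\,\mathrm dy = 0$;
\item $\tfrac12\int\!\!\int J(y,z)J(z,x)\,\mathrm dy\,\mathrm dz = -\tfrac12\int d_J^{\uparrow}(z)\, J(z,x)\,\mathrm dz = 0$.
\end{itemize}
The remaining quadratic term $\tfrac12\int\!\!\int J(x,y)J(z,x)\,\mathrm dy\,\mathrm dz$ factors as $\tfrac12\, d_J^{\uparrow}(x)\cdot(-d_J^{\uparrow}(x))$, which is $0$.

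This leaves the cubic term $C(x) := \int\!\!\int J(x,y)J(y,z)J(z,x)\,\mathrm dy\,\mathrm dz$. It does not vanish from degree-regularity in any obvious way, so I would show instead that $C(x) = 0$ for almost every $x$ by a symmetry argument. Relabelling $y \leftrightarrow z$ gives $C(x) = \int\!\!\int J(x,z)J(z,y)J(y,x)\,\mathrm dy\,\mathrm dz$. Applying antisymmetry to each of the three factors introduces $(-1)^3$, which turns this into $-\int\!\!\int J(z,x)J(y,z)J(x,y)\,\mathrm dy\,\mathrm dz = -C(x)$. Hence $C(x) = 0$ pointwise for every $x$, and in fact without using degree-regularity at all.

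Combining the three steps gives $t^{\triangle}_W(x) = \tfrac18$ for almost every $x$. This is constant, so $W$ is $\triangle$-regular, and consistently $t(\cc, W) = \tfrac18$, in agreement with \cite{grzesik2023cycles}. The only delicate points are keeping track of signs when applying antisymmetry and justifying Fubini, which is immediate because $J$ is bounded. The cubic term was the step I expected to be the main obstacle, and the antisymmetry trick above disposes of it.
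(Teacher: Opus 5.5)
Your proof is correct and is essentially the paper's argument. The paper substitutes $W(x,y)=1-W(y,x)$ into all three factors and evaluates the lower-order terms via degree-regularity. It then uses the same $y\leftrightarrow z$ relabeling to recognize the cubic term as $t^{\triangle}_W(x)$ itself, which gives $2t^{\triangle}_W(x)=\frac14$. Your centered expansion $W=\frac12+J$ is a slightly cleaner version of the same idea: every non-constant term vanishes, and the relabeling shows directly that $C(x)=-C(x)$, for every $x$ and without using degree-regularity.
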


\begin{proof} 
Suppose $W$ is degree regular. Then we know from Remark \ref{remark:degree} that 
$d^{\uparrow}_W(x) = d^{\downarrow}_W(x) = \frac{1}{2}$, for almost every $x \in [0,\,1]$. Then,
\begin{align}\label{eq:Wregular}
    t^{\triangle}_{W}(x) &=\int_{[0,\,1]^2}W(x,\,y)W(y,\,z)W(z,\,x)\mathrm{d}y\mathrm{d}z \nonumber \\
    &=\int_{[0,\,1]^2}(1-W(y,\,x))(1-W(z,\,y))(1-W(x,\,z))\mathrm{d}y\mathrm{d}z . 
    %&=1-(1-c)-\frac{1}{2}-c+(1-c)^2+c^2+c(1-c)-t^{\triangle}_{W}(x),
\end{align}
Note, $\int_{0}^1W(y,\,x) \mathrm{d} y = d^{\downarrow}_W(x) = \frac{1}{2}$, $\int_{0}^1 W(x,\,z) \mathrm{d} z = d^{\uparrow}_W(x) = \frac{1}{2}$, almost everywhere, and $\int_{[0,\,1]^2}W(y,\,z)\mathrm{d}y\mathrm{d}z  =\frac{1}{2}$. 
 Moreover, for almost every $x \in [0, 1]$, 
 \begin{align*}
 \int_{[0,\,1]^2}W(y,\,x) W(z,\,y) \mathrm{d}y\mathrm{d}z & =  \int_{[0,\,1]^2} W(y,\,x) d^{\downarrow}_W(y) \mathrm d y \mathrm d z = \tfrac{1}{2} d^{\downarrow}_W(x) = 
\tfrac{1}{4}  ,  \\ 
 \int_{[0,\,1]^2} W(z,\,y) W(x,\,z) \mathrm{d}y\mathrm{d}z & =  \int_{[0,\,1]^2} d^{\uparrow}_W(z) W(x,\,z) \mathrm d y \mathrm d z = \tfrac{1}{2} d^{\uparrow}_W(x) = \tfrac{1}{4} , \\ 
 \int_{[0,\,1]^2} W(y,\,x) W(x,\,z) \mathrm{d}y\mathrm{d}z & =   d^{\uparrow}_W(x) d^{\downarrow}_W(x) = \tfrac{1}{4} . 
\end{align*} 
Also, relabeling the variables gives, $\int_{[0,\,1]^2} W(y,\,x) W(z,\,y) W(x,\,z) \mathrm{d}y\mathrm{d}z =  t^{\triangle}_{W}(x).$
Combining the above with \eqref{eq:Wregular} gives, $t^{\triangle}_{W}(x) = \frac{1}{8}$, for almost every $x \in [0, 1]$. 
\end{proof}

\begin{remark} Note that the converse of the result in Lemma \ref{lem:deg_reg} does not hold, in general. For instance, the Condorcet tournamenton is $\triangle$-regular (recall Remark~\ref{remark:conditional}), but it is not degree-regular when $p \neq \frac{1}{2}$. In particular, for $W$ as in \eqref{eq:Tn}, the out-degree function 
$$d_W^{\uparrow}(x)=p+x(1-2p), \quad \text{ for } x\in[0,1] , $$ 
which is non-constant, for $p \ne \frac{1}{2}$. 
\label{remark:randomdegree}
\end{remark}

\begin{lem} Let $t_{W}(\lp, x, y)$ be the kernel defined in \eqref{eq:rpxy}. Then, for $x, y \in [0, 1]$,  
\begin{align}\label{eq:rplpxy}
t_{W}(\lp, x, y) - t_{W}(\lp, y, x) = d^{\uparrow}_W(y)-d^{\uparrow}_W(x) , 
\end{align} 
where $d^{\uparrow}_W$ is the out-degree function defined in \eqref{eq:directeddegree}. Hence, $t_{W}(\lp, x, y)$ is symmetric if and only if $W$ is a degree-regular tournamenton.
\label{lem:rp}
\end{lem}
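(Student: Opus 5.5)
This is a direct computation from the definitions; no earlier result is needed beyond the antisymmetry identity $W(a,b)+W(b,a)=1$. I will write both directed-path densities from \eqref{eq:rpxy} as integrals over $z$ and subtract them:
\begin{align*}
t_{W}(\lp, x, y) - t_{W}(\lp, y, x) = \int_0^1 \bigl( W(y,z)W(z,x) - W(x,z)W(z,y) \bigr)\,\mathrm{d}z .
\end{align*}

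The key step is to substitute $W(z,x) = 1-W(x,z)$ and $W(z,y)=1-W(y,z)$ in the integrand. This gives
\begin{align*}
W(y,z)\bigl(1-W(x,z)\bigr) - W(x,z)\bigl(1-W(y,z)\bigr) = W(y,z) - W(x,z) .
\end{align*}
The quadratic terms $W(x,z)W(y,z)$ cancel exactly. Integrating over $z$ and recalling the definition of the out-degree in \eqref{eq:directeddegree} gives $d^{\uparrow}_W(y)-d^{\uparrow}_W(x)$, which is \eqref{eq:rplpxy}.

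For the equivalence, first suppose $W$ is degree-regular, so $d^{\uparrow}_W\equiv \frac12$ almost everywhere. Then the difference vanishes for almost every $(x,y)$, and the kernel is symmetric. Conversely, suppose $t_W(\lp,x,y)=t_W(\lp,y,x)$ for almost every $(x,y)$. Then $d^{\uparrow}_W(x)=d^{\uparrow}_W(y)$ for almost every pair $(x,y)$. By Fubini there is some $y_0$ with $d^{\uparrow}_W(x)=d^{\uparrow}_W(y_0)$ for almost every $x$, so $d^{\uparrow}_W$ is almost everywhere constant, i.e.\ $W$ is degree-regular.

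There is no real obstacle here. The only point needing care is that symmetry must be read in the almost-everywhere sense, and the Fubini step is what turns pairwise equality of degrees into almost-everywhere constancy of $d^{\uparrow}_W$.
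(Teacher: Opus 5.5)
Your proof is correct and follows the paper's argument: both expand directly using $W(a,b)+W(b,a)=1$ so that the quadratic term cancels. The paper substitutes only into $t_W(\lp,x,y)$ and then uses $1-d^{\downarrow}_W(y)=d^{\uparrow}_W(y)$, whereas you substitute into both integrands and never need the in-degree; your Fubini step for the ``hence'' equivalence correctly supplies a detail the paper leaves implicit.
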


\begin{proof} Fix $x, y \in [0, 1]$. Then note that 
\begin{align*}
    t_{W}(\lp, x, y) =\int_0^1W(y,\,z)W(z,\,x)\mathrm{d}z & =\int_0^1(1-W(z,\,y))(1-W(x,\,z))\mathrm{d}z\\
    &=1-d^{\downarrow}_W( y )-d^{\uparrow}_W(x)+t_{W}(\lp, y, x) \\ 
    & =d^{\uparrow}_W(y)-d^{\uparrow}_W(x)+t_{W}(\lp, y, x).
\end{align*} 
This proves \eqref{eq:rplpxy}. 
\end{proof}

\section{Convergence of $\bar{\triangle}_{T_n}$}   
\label{sec:Wtriangleconvergence}

Recall from \eqref{eq:triangleWxy}, 
$\bar{\triangle}_{T_n}(x, y) = \hat \triangle_{T_n}( \lceil n x \rceil, \lceil n y \rceil )$, for $x, y \in [0, 1]$. This is a symmetric function from $[0, 1]^2 \rightarrow [0, 1]$, hence, it is  a graphon. In this section, we will show that $\bar{\triangle}_{T_n}$ converges in the cut-metric to the graphon $\triangle_{W}$ (recall \eqref{eq:Wp}). 
    
    \begin{defn}\label{defn:Wconvergence} \cite[Chapter 8]{lovasz_book}
    The {\it cut-distance} between two bounded functions $W_1, W_2 : [0, 1]^2 \rightarrow \mathbb R$ is 
    \begin{align}\label{eq:W12}
    ||W_1-W_2||_{\square}:=\sup_{f, g: [0, 1] \rightarrow [0, 1]} \left|\int_{[0, 1]^2} \left(W_1(x, y)-W_2(x, y)\right) f(x) g(y) \mathrm dx \mathrm dy \right|. 
    \end{align} 
    The {\it cut-metric} between $W_1, W_2$ is defined as,  
    \begin{align*}
    \delta_{\square}(W_1, W_2):= \inf_{\psi}||W_1^{\psi}-W_2||_{\square}, 
    \end{align*} 
    with the infimum taken over all measure-preserving bijections $\psi: [0, 1] \rightarrow [0, 1]$, and  $W_1^\psi(x, y):= W_1(\psi(x), \psi(y))$, for $x, y \in [0, 1]$. 
    \end{defn}

With the above definition, we can now prove the following result:

\begin{lem} Let be $\bar{\triangle}_{T_n}$ and $\triangle_{W}$ as defined in  \eqref{eq:triangleWxy} and \eqref{eq:Wp}, respectively. Then 
$$\delta_{\square}(\bar{\triangle}_{T_n}, \triangle_{W}) \rightarrow 0, $$
$n \rightarrow \infty$, almost surely.  
\label{lm:Wtriangleconvergence}
\end{lem}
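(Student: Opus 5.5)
The plan is to compare $\bar\triangle_{T_n}$ with the $n\times n$ step graphon obtained by sampling $\triangle_W$ at the latent points, and then to invoke the standard sampling lemma for graphons. Define the symmetric matrix $B_n(u,v) := \triangle_W(\eta_u,\eta_v)$ and its step graphon $\bar B_n(x,y) := B_n(\lceil nx\rceil,\lceil ny\rceil)$. By the sampling theorem for graphons \cite[Lemma 10.16 / Theorem 11.52]{lovasz_book}, $\delta_\square(\bar B_n,\triangle_W)\to0$ almost surely; the diagonal entries do not matter since they change the cut norm by at most $O(1/n)$. By the triangle inequality for $\delta_\square$, it then suffices to show $\|\bar\triangle_{T_n}-\bar B_n\|_\square\to0$ almost surely. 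This cut norm is bounded by $\frac{1}{n^2}\max_{S,R\subseteq[n]}\bigl|\sum_{u\in S,v\in R}(\hat\triangle_{T_n}(u,v)-B_n(u,v))\bigr|$ times a constant, because the supremum over $[0,1]$-valued step functions is attained at indicators.

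We decompose $\hat\triangle_{T_n}(u,v)-B_n(u,v)$ into two errors. The first comes from the $w$-averaging. Write $\hat\triangle_{T_n}(u,v) = \frac12\bigl(a_{uv}\hat p(u,v)+a_{vu}\hat p(v,u)\bigr)$ with $\hat p(u,v):=\frac1n\sum_w a_{vw}a_{wu}$. Conditionally on $\eta_u,\eta_v,\vartheta_{uv}$, the terms indexed by $w\ne u,v$ are i.i.d. with mean $t_W(\lp,\eta_u,\eta_v)$. Hoeffding's inequality together with a union bound over the $n^2$ pairs gives $\max_{u,v}|\hat p(u,v)-t_W(\lp,\eta_u,\eta_v)| = O(\sqrt{\log n/n})$, with failure probability summable in $n$, so Borel--Cantelli applies. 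Replacing $\hat p$ by $t_W(\lp,\cdot,\cdot)$ therefore costs $o(1)$ in the entrywise sup norm, and hence in the cut norm.

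The second error is the edge randomness. After the first replacement, the remaining difference is
\[
D(u,v)=\tfrac12\bigl(a_{uv}-W(\eta_u,\eta_v)\bigr)\bigl(t_W(\lp,\eta_u,\eta_v)-t_W(\lp,\eta_v,\eta_u)\bigr),
\]
using $a_{vu}=1-a_{uv}$. Conditionally on $\{\eta_u\}$, the entries $D(u,v)$ for $u<v$ are independent, centered, and bounded by $1$. For fixed $S,R$, Hoeffding's inequality gives $\P\bigl(|\sum_{S\times R}D|>\varepsilon n^2\bigr)\le 2e^{-c\varepsilon^2n^2}$. A union bound over the $4^n$ choices of $(S,R)$ then yields summable probabilities, and Borel--Cantelli gives $\frac1{n^2}\max_{S,R}|\sum D|\to0$ almost surely.

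The main obstacle is the first step. The summand $a_{vw}a_{wu}$ shares the edge variables $\vartheta_{vw},\vartheta_{wu}$ across different pairs, so dependence between pairs has to be handled. Conditioning on the pair $(u,v)$ and applying the union bound entrywise avoids this. The remaining subtlety is that the $w=u,v$ terms, and the diagonal convention $a_{uu}=\frac12$, contribute only $O(1/n)$ each. Combining the three bounds gives $\delta_\square(\bar\triangle_{T_n},\triangle_W)\to0$ almost surely.
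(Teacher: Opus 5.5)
Your proof is correct, and it takes a different route from the paper's. The paper first notes that, because $\hat\triangle_{T_n}$ and the empirical tournamenton share the convention $a_{uu}=\frac12$, one has exactly $\bar\triangle_{T_n}=\triangle_{W^{T_n}}$. The lemma then follows from three ingredients: a tournamenton version of the sampling lemma, $\delta_\square(W^{T_n},W)\to0$ almost surely; the deterministic estimate $\|\triangle_{U_n}-\triangle_U\|_\square\le 3\|U_n-U\|_\square$; and the equivariance $\triangle_{U^\psi}=(\triangle_U)^\psi$. The estimate comes from telescoping the product $U(x,y)U(y,z)U(z,x)$ and freezing the variable not involved in each difference, so that each term becomes a cut-norm pairing. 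In that argument the $w$-averages are never concentrated entrywise, since the cut norm absorbs the averaging and the edge noise at once.

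You instead apply the standard graphon sampling lemma to the deterministic kernel $\triangle_W$ and bound $\|\bar\triangle_{T_n}-\bar B_n\|_\square$ directly, in two steps:
\begin{enumerate}
\item a uniform $O(\sqrt{\log n/n})$ bound on the 2-path averages, via conditional Hoeffding and a union bound over pairs;
\item a Hoeffding bound with a $4^n$ union bound over index sets for the symmetric, conditionally independent, centered noise $D(u,v)=\frac12(a_{uv}-W(\eta_u,\eta_v))(t_W(\lp,\eta_u,\eta_v)-t_W(\lp,\eta_v,\eta_u))$.
\end{enumerate}
Both steps check out, as does the reduction of the cut norm of a step kernel to a maximum over $S,R\subseteq[n]$.

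Your route buys three things. It needs only the sampling lemma for a symmetric bounded kernel; the paper's ``straightforward adaptation'' to tournamentons is exactly where an edge-noise argument like your step 2 is hidden. It gives explicit rates. It also exposes a nice point: $D$ is negligible in cut norm but not in $L^2$, since $\E[D(u,v)^2]=\tau_W^2/2$, which is precisely the extra term in the limit of $\sum_s\hat\lambda_s^2$ in the proof of Lemma~\ref{lem:emp_MGF}.

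The paper's route is shorter and yields a reusable structural fact, the cut-norm Lipschitz continuity of $U\mapsto\triangle_U$.
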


\begin{proof}
Let $W^{T_n}$ be the empirical tournament associated with $T_n$ (as in \eqref{eq:Tn}). 
Denote 
\begin{align*}
    \triangle_{W^{T_n}}(x,\,y) :=\frac{W^{T_n}(x, y) t_{W^{T_n}}(\lp, x, y) + W^{T_n}(y, x) t_{W^{T_n}}(\lp, y, x) }{2} . 
\end{align*} 
Note that $\bar{\triangle}_{T_n}(x, y) = \triangle_{W^{T_n}}(x,\,y)$, for almost every $(x, y) \in [0, 1]^2$.   Hence, to prove the result we have to show that,  given a tournamenton $U$, the map $U \rightarrow  \triangle_{U}(x,\,y)$, where 
\begin{align}
    \triangle_{U}(x,\,y) :=\frac{U(x, y) t_{U}(\lp, x, y) + U(y, x) t_{U}(\lp, y, x) }{2} . 
    \label{eq:Uxy}
\end{align} 
is continuous in the cut-metric.  To this end, first note that a straightforward adaption of the  sampling lemma for graphons \cite[Lemma 10.16]{lovasz_book} to the setting of tournamentons, gives $\delta_{\square}(W^{T_n}, W) \rightarrow 0$, almost surely. Now, choose a sequence of measure-preserving transformations $\{\psi_n\}_{n \geq 1}$ such that $\|W_{T_n}^{\psi_n}-W\|_\square\to 0$. Note that the functional $U \rightarrow \triangle_U$ is equivariant under relabeling, that is, $\triangle_{U^\psi}=(\triangle_U)^\psi$. Thus, it suffices to prove that
\begin{align}\label{eq:Un}
\|U_n-U\|_\square\to 0 \quad\Longrightarrow\quad \|\triangle_{U_n}-\triangle_U\|_\square\to 0. 
\end{align}

We begin by considering the first term in \eqref{eq:Uxy}. Fix $f, g: [0, 1] \rightarrow [0, 1]$. Then by a telescoping argument and triangle inequality, 
\begin{align}
& \left| \int_{[0, 1]^2} \left( U_n(x, y) t_{U_n}(\lp, x, y) - U(x, y) t_{U}(\lp, x, y) \right) f(x) g(y) \,\mathrm{d}x\,\mathrm{d}y \right| \leq R_1 + R_2 + R_3 , \nonumber 
\end{align}
where 
\begin{align} 
R_1 &= \left|\int_{[0, 1]^3} (U_n(x,y)-U(x,y)) U_n(y,z)U_n(z,x) f(x) g(y) \,\mathrm{d}x\,\mathrm{d}y\,\mathrm{d}z \right| , \nonumber \\ 
R_2 & = \left| \int_{[0, 1]^3}
U(x,y) (U_n(y,z)-U(y,z)) U_n(z,x) f(x) g(y) \,\mathrm{d}x\,\mathrm{d}y\,\mathrm{d}z \right| , \nonumber \\ 
R_3 & = \left| \int_{[0, 1]^3}
U(x,y)U(y,z) (U_n(z,x)-U(z,x)) f(x) g(y) \,\mathrm{d}x\,\mathrm{d}y\,\mathrm{d}z \right| . \nonumber 
\end{align} 
Recalling \eqref{eq:W12}, note that each of the above terms are bounded by $\|U_n-U\|_\square$.
The same bound holds for the second term in \eqref{eq:Uxy}. Hence, for  $f, g: [0, 1] \rightarrow [0, 1]$, 
$$\left| \int_{[0, 1]^2} \left( \triangle_{U_n}(x,\,y)  - \triangle_{U}(x,\,y) \right)  f(x) g(y) \,\mathrm{d}x\,\mathrm{d}y \right| \leq 3 \|U_n-U\|_\square.$$
This proves \eqref{eq:Un} and completes the proof of Lemma \ref{lm:Wtriangleconvergence}.  \end{proof}

\end{document}